\newtheorem{theorem}{Theorem}[section]
\newtheorem{lemma}[theorem]{Lemma}
\newenvironment{proof}{\begin{trivlist}
  \item[\hskip \labelsep \emph{Proof:}]} {\hfill $\Box$ \end{trivlist}}
\newcommand{\mbb}{\mathbb}
\newcommand{\pt}{\mathit{pt}}
\newcommand{\tmf}{\mathit{tmf}}
\newcommand{\Tmf}{\mathit{Tmf}}
\newcommand{\TMF}{\mathit{TMF}}
\newcommand{\Sq}{\mathit{Sq}}
\newcommand{\mell}{\overline{\mathcal{M}}_{ell}}
\newcommand{\weier}{\overline{\mathcal{M}}_{ell}^{\,_+}}
\newcommand{\Spf}{\mathit{Spf}}
\newcommand{\longtwoheadrightarrow}{\ensuremath{\relbar\joinrel\twoheadrightarrow}}
\newcommand{\gray}{\renewcommand{\ssconncolor}{sseqgr} \renewcommand{\ssplacecolor}{sseqgr}}
\newcommand{\black}{\renewcommand{\ssconncolor}{ssblack} \renewcommand{\ssplacecolor}{ssblack}}
\newcommand{\res}[1]{\resizebox{\linewidth}{!}
{#1}
\vspace{-.7cm}}
\newcommand{\ret}[4]{
\begin{figure}[H]
\resizebox{\linewidth}{!}{
\begin{sseq}{#1}{#2}
#3
\end{sseq}
}
\vspace{-.7cm}
\caption{#4}
\end{figure}\vspace{-.7cm}
}
\newcommand{\alphamult}{
\ssline 3 1
\ssdropbull
\ssmove 7 1
}
\newcommand{\firstrightatthree}{
\gray
\ssarrow {-1} 5
\ssmove 1 {-5}
\ssline 3 1
\black \ssdropbull \ssline 3 5
\ssmove {4} {-4}
}
\newcommand{\threestep}{
\gray \ssarrow {-1} 9
\ssdropbull \ssline 3 1
\ssdropbull \ssline[arrowfrom] 1 {-5}
\ssdropbull \ssline 3 1
\ssdropbull \ssline[arrowfrom] 1 {-5}
\ssdropbull \ssline 3 1
\ssdropbull \black
}
\newcommand{\movefour}{
\black
\ssmove 4 {0}
\ssdrop{\square}
}
\newcommand{\minfour}{
\black
\ssmove {-4} 0
\ssdrop{\square}
}
\newcommand{\cmult}{
\ssline 3 1 \ssdropbull
\ssline 3 1 \ssdropbull
\ssline 3 1 \ssdropbull
\ssmove {-4} {-2} \ssdropbull \ssline 1 1 \ssdropbull \ssarrow 1 1\ssdrop{}\ssmove {-1} {-1}
\ssmove 2 {0} \ssdropbull
\ssline 1 1 \ssmove {-6}{-2}
\ssdropextension \ssmove 5 1}
\newcommand{\ccmult}{
\ssmove 7 1 \ssdropbull
\ssline {-1}{-1} \ssdropbull
\ssline 3 1 \ssdropbull
\ssmove 3 1 \ssdropbull\cmult \ssmove{-8}{-2} \ssline 1 1 \ssdropbull \ssarrow 1 1\ssdrop{}\ssmove {-2} {-2}
\ssdropextension \ssdropextension
\ssmove {-3} {-1} \ssstroke
\ssmove {11} 3}
\newcommand{\xdy}{
\ssline 3 1 \ssdropbull
\ssline 3 1 \ssdropbull
\ssline 3 1 \ssdropbull
\ssline {-1}{-1} \ssdropbull
\ssline {-1}{-1} \ssdropbull
\ssline 3 1 \ssdropbull
\ssline {-1}{-1} \ssdropbull
\ssmove 6 2 \ssdropbull
\ssline {-1}{-1} \ssdropbull
\ssline 3 1 \ssdropbull
\ssline {-1}{-1} \ssdropbull
\ssline {-1}{-1} \ssdropbull
\ssline 3 1 \ssdropbull
\ssline 3 1 \ssdropbull}
\newcommand{\minxdy}{
\ssmove 3 1 \ssdropbull
\ssline 3 1 \ssdropbull
\ssline 3 1 \ssdropbull
\ssline {-1}{-1} \ssdropbull
\ssline {-1}{-1} \ssdropbull
\ssline 3 1 \ssdropbull
\ssline {-1}{-1} \ssdropbull
\ssmove 6 2 \ssdropbull
\ssline {-1}{-1} \ssdropbull
\ssline 3 1 \ssdropbull
\ssline {-1}{-1} \ssdropbull
\ssline {-1}{-1} \ssdropbull
\ssline 3 1 \ssdropbull
\ssline 3 1 \ssdropbull}
\def\htower#1{\expandafter\nlineii\romannumeral\number\number #1 000\relax}
\def\nlineii#1{\if#1m\expandafter\theline\expandafter\nlineii\fi}
\def\theline{\ssline 1 1\ssdropbull}
\title{The homotopy groups of the spectrum $\Tmf$}
\author{Johan Konter}
\date{October 31, 2012}
\begin{document}

\newcommand{\coker}{\mathop{\mathgroup\symoperators coker}\nolimits}
\newcommand{\im}{\mathop{\mathgroup\symoperators im}\nolimits}
\newcommand{\Aut}{\mathop{\mathgroup\symoperators Aut}\nolimits}
\newcommand{\ord}{\mathop{\mathgroup\symoperators ord}\nolimits}
\newcommand{\PGL}{\mathop{\mathgroup\symoperators PGL}\nolimits}
\newcommand{\Ext}{\mathop{\mathgroup\symoperators Ext}\nolimits}
\newcommand{\Tor}{\mathop{\mathgroup\symoperators Tor}\nolimits}
\newcommand{\Hom}{\mathop{\mathgroup\symoperators Hom}\nolimits}
\renewcommand{\top}{\mathop{\mathgroup\symoperators top}\nolimits}
\newcommand{\Spec}{\mathop{\mathgroup\symoperators Spec}\nolimits}

\def\sseqpacking{\sspackdiagonal}

\maketitle

\begin{abstract}
We use the structure of the homotopy groups of the connective spectrum $\tmf$ of topological modular forms and the elliptic and Adams-Novikov spectral sequences to compute the homotopy groups of the non-connective version $\Tmf$ of that spectrum. This is done separately for the localizations at 2, 3 and higher primes.
\end{abstract}


\section{Introduction}



The complete computation of the homotopy groups of spheres appears to be an untractable problem. 
An approximation to the stable homotopy groups of spheres can be given by the cohomology of the Steenrod algebra $A$. More precisely the Adams spectral sequence with $\Ext_A(\mbb F_p,\mbb F_p)$ as $E_2$-term abuts to the $p$-component of the stable homotopy groups of spheres. There is a sequence of ring spectra characterized by the the fact that their mod 2 cohomology is a quotient of the Steenrod algebra $A$. The first spectrum is the Eilenberg-MacLane spectrum $H\mbb Z/2$ and its cohomology is the whole Steenrod algebra. Then comes $H\mbb Z$ whose cohomology is $A//\langle\Sq^1\rangle$. After that come the connective complex and real K-theory spectra $ku$ and $ko$ whose cohomologies are given by respectively $A//\langle\Sq^1,\Sq^2\Sq^1+\Sq^1\Sq^2\rangle$ and $A//\langle\Sq^1,\Sq^2\rangle$ (see \cite{ada} for details). The Adams spectral sequences whose $E_2$-term is the cohomology of the respective finite dimensional subalgebras as their $E_2$-terms, converge to the homotopy groups of the corresponding ring spectra. A bigger subalgebra gives a better approximation of $\Ext_A(\mbb F_2,\mbb F_2)$, and thus of the homotopy groups of spheres. This paper is concerned with the spectrum whose mod 2 cohomology is given by $A//\langle\Sq^1,\Sq^2, \Sq^4\rangle$. This spectrum is the spectrum of topological modular forms localized at 2 (see \cite{rez}) and denoted $\tmf$. The next quotient of $A$ in this sequence is not realizable as the mod 2 cohomology of a spectrum, so in this sense $tmf$ gives the best possible approximation of the stable homotopy groups of the spheres. 

In the remainder of this introduction, we sketch the construction of some variants of the spectrum of topological modular forms and we outline our strategy for computing their homotopy groups. 

Following Hopkins, Miller and Goerss, \cite{hop}, a presheaf of elliptic homology theories on the moduli stack of elliptic curves $\mathcal M_{ell}$ is constructed. One then uses some heavy machinery to lift this presheaf to a sheaf of $E_\infty$ ring spectra \cite{beh}.  Once we have such a sheaf, we can take its global sections over $\mathcal M_{ell}$, which gives us the \emph{periodic} spectrum of topological modular forms $\TMF$. Similarly we get the \emph{non-connective} spectrum of topological modular forms $\Tmf$ associated to the Deligne-Mumford compactification $\mell$, and a \emph{connective} spectrum of topological modular forms $\tmf$.

The paper is organized as follows. We will start with some background on elliptic curves, see \S\ref{ssec:mod}, elliptic cohomology theory, see \S\ref{ssec:spec}, and Weierstrass equations, see \S\ref{ssec:wei}. People familiar with topological modular forms might want to skip to \S\ref{ssec:hom}, where we start the actual computation of the homotopy groups. The homotopy groups $\pi_*\tmf$ of the connective spectrum of topological modular forms have been calculated by Bauer \cite{bau}. It is fairly easy to deduce $\pi_*\TMF$ from this calculation, namely by $\pi_*\TMF=\pi_*\tmf[\Delta^{-24}]$. The goal of this paper is to fill the niche that is left and compute the homotopy groups of the non-connective $\Tmf$, which is done in Theorem \ref{th:main}. The main computational tool is the use of spectral sequences based on the cohomology of $\mell$ with coefficients in some graded sheaves. 
We split the computation of $\pi_*\Tmf$ into three cases: away from the primes 2 and 3 (in Section \ref{sec:6in}), localized at 3 (in Section \ref{sec:3}) and localized at 2 (in Section \ref{sec:2}).

We assume that the reader is familiar with general algebraic geometry concepts, like sheaves, schemes, cohomology \cite{har}. We view stacks mainly as pseudo-functors of groupoids, as in \S3.1.2 of \cite{vis}.

\subsection{The moduli stack of elliptic curves}\label{ssec:mod}

An elliptic curve over a field $K$ is a genus 1 non-singular projective algebraic curve over $K$ with a marked point over $K$. An elliptic curve over a scheme $S$ is subsequently defined as a proper, flat finitely presented morphism of schemes $p:C\to S$ equipped with a section $e:S\to C$, such that every geometric fiber of $p$ is an elliptic curve in the previous sense. The marked point of such a fiber is then given by the section $e$.

To classify algebro-geometric objects, such as elliptic curves, one generally sets up a moduli problem. In our case of interest, this is the functor 
$F: {\tt Aff}^{\textrm{op}}\to {\tt Sets}$ which sends an affine scheme $\Spec(R)$ to the set of isomorphisms classes of elliptic curves over $\Spec(R)$. 
Now suppose that $F$ is representable, say by an scheme $M$. By definition of representability this means that there exists a natural isomorphism
\begin{equation}\label{eq:rep}\Hom(\Spec(R),M)\simeq F(\Spec(R))=\{\textrm{iso. classes of elliptic curves }C\to\Spec(R)\}\end{equation} for any affine scheme $\Spec(R)$. The identity element in $\Hom(M,M)$ would correspond via this natural isomorphism to a particular elliptic curve over $M$, which we call universal. However, because the the automorphism group of an elliptic curve is non-trivial, this natural isomorphism cannot exist, and $F$ is not representable by a scheme. 
The moduli problem $F: {\tt Aff}^{\textrm{op}}\to {\tt Sets}$ defined by equation (\ref{eq:rep}) is too naive. The correct way to set up things is  to look at $F$ as a stack of elliptic curves, $F: {\tt Aff}^{\textrm{op}}\to {\tt Groupoids}$ where $F(\Spec(R))$ is the \emph{groupoid} of elliptic curves over $\Spec(R)$. So we do not have a moduli scheme, but a moduli stack.

Let $\mathcal M_{ell}$ denote the moduli stack of elliptic curves over $\Spec(\mbb Z)$. Let $\mell$ denote the moduli stack of generalized elliptic curves, i.e. we allow fibers of $p:C\to S$ to be curves with a nodal singularity, but the section $e$ must be contained in the smooth locus of the fibers. This stack is called the Deligne-Mumford compactification of $\mathcal M_{ell}$. Furthermore, we let $\weier$ be the moduli stack of Weierstrass elliptic curves, i.e. we even allow fibers to have a cusp away from the section $e$. So we have the embeddings $\mathcal M_{ell}\hookrightarrow\mell\hookrightarrow\weier$ as open substacks.


\subsection{Universal elliptic cohomology theory}\label{ssec:spec}

We recall that a generalized cohomology theory is a sequence of presheaves of abelian groups $\{E^n(-)\}$ on the category of topological spaces along with coboundary homomorphisms, which satisfy the Eilenberg-Steenrod axioms. We write $E^*(-)=\Pi_nE^n(-)$ for the corresponding functor with values in graded abelian groups. By Brown representability, there exist spaces $E_n$ such that for every space $X$ we have $E^n(X)=[X,E_n]$, the set of homotopy classes of maps from $X$ to $E_n$. The Eilenberg-Steenrod axioms imply that there are natural isomorphisms $E^{n}(X)\to E^{n+1}(\Sigma X)$, which induce maps $\sigma_n:\Sigma E_n\to E_{n+1}$. We denote the resulting spectrum by $E=\{E_n,\sigma_n\}$. This spectrum also allows us to define a corresponding homology theory $\{E_n(-)\}$ and we write $E_*(-)$ for $\bigoplus_nE_n(-)$. From the definitions, it follows immediately that $\pi_*E:=E^{-*}(pt)\simeq E_*(\pt)$. 

In the nineties Landweber, Ravenel, and Stong introduced some interesting cohomology theories called elliptic cohomology theories \cite{lrs}. We call a generalized cohomology theory $E^*(-)$ even and weakly periodic if $E^*(pt)$ is concentrated in the even degrees, $E^2(pt)$ is an invertible $E^0(pt)$-module and the natural map $$E^2(pt)\otimes_{E^0(pt)} E^n(pt)\stackrel{\simeq}{\longrightarrow}E^{n+2}(pt)$$ is an isomorphism for all $n\in\mathbb Z$. An elliptic cohomology theory is defined to be a generalized cohomology theory $E^*(-)$ which is even and weakly periodic, such that there exists an elliptic curve $C$ over a ring $R$, an isomorphism $\pi_*E\simeq R$ and an isomorphism of formal groups $\Spf E(\mbb C\mbb P^\infty)\simeq\hat C$, where $\hat C$ is the formal completion of $C$ along the identity section $e: \Spec(R)\longrightarrow C$.

By definition of $\mathcal M_{ell}$ as a moduli stack, an elliptic curve $C$ over a ring $R$ is equivalent to a map $f:\Spec(R)\longrightarrow\mathcal M_{ell}$ from $\Spec(R)$ to the moduli stack. The formal completion $\hat C$ of this elliptic curve $C$ along the identity section $e: \Spec(R)\longrightarrow C$ has the structure of a formal group. 

Let us digress a little on formal groups. The zeroth space of the periodic complex cobordism spectrum $MP=\bigvee_{m\in\mbb Z}\Sigma^{2m}MU$ is denoted by $MP_0$. When a formal group $\hat C$ over $R$ admits a choice of local coordinates, 
then by Quillen's theorem \cite{ada} it corresponds to a ring homomorphism $MP_0\longrightarrow R$ \cite{lur}. 
This gives a map from the stack of formal groups $\mathcal M_{\textrm FG}$ to the stack associated to the Hopf algebroid $(MP_0, MP_0MP)$, which we denote by $\mathcal M_{(MP_0, MP_0MP)}$. 
By a theorem of Quillen, Landweber and Novikov \cite{koc}, we even know that this map is a natural isomorphism.  A formal group law $MP_0\longrightarrow R$ is called Landweber exact if the functor $R\otimes_{MP_0}-$ is exact on the category of $(MP_0,MP_0MP)$-comodules \cite{hs}. By the Landweber exact functor theorem \cite{lan}, a formal group law $MP_0\longrightarrow R$ is Landweber exact if and only if the corresponding morphism $$\Spec(R)\longrightarrow\Spec(MP_0)\longrightarrow\mathcal M_{(MP_0, MP_0MP)}\simeq\mathcal M_{\textrm FG}$$ is flat \cite{hoh}. 

Back to our case at hand, we have constructed a morphism of stacks $\mathfrak F:\mathcal M_{ell}\longrightarrow\mathcal M_{\textrm FG}$ by sending $C$ to $\hat C:MP_0 \longrightarrow R$. Hopkins and Miller showed that $\mathfrak F$ is flat \cite{hoh}. So provided that the map $f$ is flat, the composite $$\Spec(R)\stackrel{f}{\longrightarrow}\mathcal M_{ell}\stackrel{\mathfrak F}{\longrightarrow}\mathcal M_{\textrm FG}$$ will also be flat. Hence the formal group $\hat C: MP_0\longrightarrow R$ is Landweber exact and therefore the functor $$Ell_{C/R}(-)=MP_*(-)\otimes_{MP_0}R$$ is a homology theory \cite{hoh}. We define the presheaf $\mathcal O^{\hom}$ on the site of affine flat schemes over the moduli stack of elliptic curves by sending $f$ to the homology theory corresponding to the formal group belonging to that elliptic curve, $\mathcal O^{\hom}(f)=Ell_{C/R}$. So we have the contravariant functor
\begin{eqnarray*}
\mathcal O^{\hom}:&(\textrm{Aff}/\mathcal M_{ell})^\textrm{op}&\longrightarrow \textrm{Homology Theories},\\
& C/R & \longmapsto  Ell_{C/R}.
\end{eqnarray*}

So, roughly speaking, there is one elliptic cohomology for every elliptic curve. A natural question to ask is whether there is a universal elliptic cohomology theory, i.e. an initial object in the category of elliptic cohomology theories. Such a universal theory should be related to a universal elliptic curve. However, we have seen in the previous section that there is no such universal elliptic curve. Hence, in this naive sense, there is no universal elliptic cohomology theory.

Another perspective on the reason for the absence of a naive universal elliptic homology theory is the following. To build such a global object from the presheaf $\mathcal O^{\hom}$ we  would like to take global sections. However, in the site of affine flat schemes over the moduli stack of elliptic curves there is no initial object and therefore no notion of global sections. Now we might try to resolve this by taking the limit of the theories $\mathcal O^{\hom}(f)$, where $f$ runs over all \'etale maps from an affine scheme to $\mathcal M_{ell}$. Unfortunately, this naive approach fails because the category of homology theories is not complete and this limit doesn't exist.

We recall that a homology theory can be represented by a spectrum. The category of spectra is better suited for our purposes; it has 
homotopy limits. So we would like a presheaf of spectra which corresponds to the presheaf of homology theories $\mathcal O^{\hom}$. The main theorem of the approach of Hopkins, Miller and Goerss is that such a presheaf does indeed exist.
\begin{theorem} (Hopkins-Miller-Goerss) There exists a sheaf $\mathcal O^{\top}$ of $E_\infty$ ring spectra on $\left(\mathcal M_{ell}\right)_{\acute{e}t}$, the \'etale site of the moduli stack of elliptic curves, whose associated presheaf of homology theories is the presheaf $\mathcal O^{\hom}$ built using the Landweber Exact Functor Theorem.
\end{theorem}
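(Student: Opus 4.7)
The plan is to build $\mathcal{O}^{\top}$ in two stages: first realize each affine étale open over $\mathcal{M}_{ell}$ by a single $E_\infty$ ring spectrum, then glue these into a sheaf by verifying that the assignment is functorial up to coherent homotopy and satisfies étale descent. The input is already in place: for every flat (in particular étale) map $f:\Spec(R)\to\mathcal{M}_{ell}$ with underlying elliptic curve $C/R$, Landweber exactness gives a homology theory $Ell_{C/R}$, and by Brown representability a spectrum $E(f)$ with $\pi_*E(f)\simeq R[u^{\pm 1}]$ whose formal group is $\hat C$. The task is to rigidify this assignment of homotopy types to a diagram of honest $E_\infty$ ring spectra.

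First I would apply the Goerss--Hopkins obstruction theory for $E_\infty$ structures on Landweber-exact spectra. The obstructions to existence of an $E_\infty$ refinement of $E(f)$, and to the uniqueness and functoriality of such refinements, live in André--Quillen cohomology groups of the form $H^s_{\mathrm{AQ}}(R;\Omega^t)$ computed in the category of graded commutative $R$-algebras with a formal group law. The key input, due to Hopkins and Miller (and extended by Goerss), is that for the moduli problem of elliptic curves these obstruction groups vanish in the relevant range because the relevant cotangent complex is concentrated in low degrees and the moduli stack is smooth of dimension one over $\Spec(\mathbb{Z})$. In particular one uses that elliptic curves have small automorphism groups and that $\mathcal{M}_{ell}$ is Deligne--Mumford, so locally in the étale topology one can trivialize the universal formal group and reduce the computation to a flat cover where the obstruction theory collapses.

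Next I would promote the pointwise realization to a diagram: for each étale morphism $g:\Spec(R')\to\Spec(R)$ over $\mathcal{M}_{ell}$, the induced map of homology theories must lift to a map of $E_\infty$ ring spectra, with obstructions to existence and to higher coherence again living in the same André--Quillen cohomology. Assembling these, one obtains a presheaf $\mathcal{O}^{\top}$ of $E_\infty$ ring spectra on $(\mathcal{M}_{ell})_{\text{\'et}}$ whose associated presheaf of homology theories recovers $\mathcal{O}^{\hom}$ by construction. Finally I would verify the sheaf condition: for an étale cover $\{U_i\to U\}$ one must show that $\mathcal{O}^{\top}(U)$ is the homotopy limit of the cosimplicial object formed by $\mathcal{O}^{\top}$ applied to the \v{C}ech nerve. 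This is checked by reducing to affine $U$ and comparing with the descent spectral sequence, whose $E_2$-term is the Čech cohomology of the sheaf of graded rings $\pi_*\mathcal{O}^{\top}$ built from the Landweber-exact homotopy; flatness of the cover and convergence of this spectral sequence yield descent.

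The hard part is unquestionably the obstruction theory: one must set up the correct Dwyer--Kan model category of $E_\infty$-algebras, identify the obstruction spectral sequence, and then carry out the actual vanishing computation for the tangent complex of the moduli stack of formal groups restricted to $\mathcal{M}_{ell}$. The rest — Brown representability, Landweber exactness, and étale descent from a spectral sequence argument — is technical but formal once the $E_\infty$ rigidification is in hand. For the purposes of this paper I would take this theorem as a black box and only verify that the resulting sheaf has the expected global sections over $\mell$, which is the input needed for the computation of $\pi_*\Tmf$ in later sections.
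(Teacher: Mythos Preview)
The paper does not prove this theorem at all: it is stated with attribution to Hopkins, Miller, and Goerss and cited to \cite{hop} and \cite{beh}, then used as a black box input for the homotopy group computations. Your final sentence --- that for the purposes of this paper the theorem should be taken as a black box --- is exactly what the paper does, and is the correct assessment.

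Your outline of the Goerss--Hopkins obstruction theory is a reasonable high-level sketch of how the actual proof goes in the literature, but it is not part of this paper and should not be presented as the paper's proof. If you are writing a proof proposal for comparison with the paper, the honest answer is simply: the paper offers no proof, only a citation.
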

The spectrum $\TMF$ is defined to be $\mathcal O^{\top}\left(\mathcal M_{ell}\right)$. This is the universal spectrum we have been looking for. 
Because the morphism $\mell\longrightarrow\mathcal M_{\textrm FG}$ is also flat, just like the morphism $\mathfrak F:\mathcal M_{ell}\longrightarrow\mathcal M_{\textrm FG}$, we can also construct and evaluate a sheaf $\mathcal O^{\top}$ on the stack of generalized elliptic curves. The spectrum $\Tmf$ is consequently defined as $\Tmf=\mathcal O^{\top}\left(\mell\right)$. The sensible step to make next is trying to evaluate $\mathcal O^{\top}$ on the stack of Weierstrass elliptic curves. The morphism $\weier\longrightarrow\mathcal M_{\textrm FG}$, however, is not flat, and there is no such sheaf over $\weier$. The spectrum $\tmf$ is defined as the connective cover $\tmf=\tau_{\geq0}\left(\mathcal O^{\top}\left(\mell\right)\right)$, meaning that $\pi_n\tmf=0$ for all $n<0$, and there exists a morphism of spectra $\tmf\to \mathcal O^{\top}\left(\mell\right)$ such that the induced map $\pi_n\tmf\to\pi_n\mathcal O^{\top}\left(\mell\right)$ is an isomorphism for all $n\geq0$.

Because $\mathcal O^{\top}$ is a sheaf of $E_\infty$ ring spectra, it makes sense to talk about $\pi_*\left(  \mathcal O^{\top}\left(\mell\right)\right)$. There is a spectral sequence relating this to $\left(  \pi_*\mathcal O^{\top}\right)\left(\mell\right)$ or rather to $H^*\left(\mell, \pi_*\mathcal O^{\top}\right)$, as we will see in \S\ref{ssec:hom}. We observe that $\pi_n\mathcal O^{\top}$ is a presheaf of abelian groups partially defined on $\left(\mell\right)_{\acute{e}t}$ which allows a sheafification $\pi_{n}^\dag\mathcal O^{\top}$. Because the sheaf $\mathcal O^{\top}$ is associated to the presheaf $\mathcal O^{\hom}$ we know that for an element $f:\Spec(R)\to \mell$ of the \'etale site $\left(\mell\right)_{\acute{e}t}$, the ring spectrum $\mathcal O^{\top}(f)$ is even and weakly periodic. This means that $\pi_2\mathcal O^{\top}(f)$ is a invertible $\pi_0\mathcal O^{\top}(f)$-module and $\pi_{n+2}\mathcal O^{\top}(f)\simeq\pi_n\mathcal O^{\top}(f)\otimes\pi_2\mathcal O^{\top}(f)$. Because $f$ is affine, the sheafification yields that $\pi_{n}^\dag\mathcal O^{\top}(f)\simeq\pi_n\mathcal O^{\top}(f)$. Hence $\pi_{n+2}^\dag\mathcal O^{\top}(f)\simeq\pi_n^\dag\mathcal O^{\top}(f)\otimes\pi_2^\dag\mathcal O^{\top}(f)$ and writing $\omega$ for the invertible sheaf $\pi_2^\dag\mathcal O^{\top}$ we find an isomorphism $\pi_{2t}^\dag\mathcal O^{\top}\simeq\omega^{\otimes t}$. (See \cite{beh} for a survey on the proof.)

\subsection{Weierstrass equations}\label{ssec:wei}

Any elliptic curve over a field $K$ is isomorphic to a curve in affine coordinates defined by a Weierstrass equation (see \cite{sil})
\begin{equation}\label{eq:wei}
y^2+a_1xy+a_3y=x^3+a_2x^2+a_4x+a_6.
\end{equation}
If both 2 and 3 are invertible in $K$, then we can write down the change of coordinates $x=x_1-\frac{1}{12}a_1^2-\frac13a_2$ and $y=y_1-\frac12a_1x_1+\frac{1}{24}a_1^3+\frac16a_1a_2-\frac12a_3$, which results in the Weierstrass equation
\begin{equation}\label{eq:norm}
y_1^2=x_1^3-27c_4x_1-54c_6,
\end{equation}
for some $c_4$ and $c_6$. The discriminant $\Delta:=\frac{1}{1728}(c_4^3-c_6^2)$ is nonzero iff the curve is smooth. 

Now we take a Weierstrass elliptic curve over a scheme $S$ and we look at the section $e:S\to C$, which is a relative Cartier divisor. Like in the case of elliptic curves over an algebraically closed field, we can use Riemann-Roch to show that $p_*\mathcal O(ne)$ is locally free of rank $n$. 
In particular the rank of $p_*\mathcal O(e)$ is 1 and, because $p_*\mathcal O(e)$ contains the constant sections, it contains only the constants. For $n=2$ we find that $p_*\mathcal O(2e)$ is of rank 2. Assuming it is free, we pick a second generator, call it $x$. Because $p_*\mathcal O(3e)$ is of rank three, it must contain another function besides those generated by 1 and $x$. Let us pick a third generator of $p_*\mathcal O(3e)$ and call it $y$.  Because $p_*\mathcal O(6e)$ is of rank 6, the elements of $\{1,x,y,x^2,xy,y^2,x^3\}$ cannot be linearly independent, which gives us a global version of the Weierstrass equation (\ref{eq:wei}) where $x$ and $y$ are now functions. If 2 and 3 are invertible in the base scheme $S$, then we can write this equation in the form of (\ref{eq:norm}). See \S2.2 of \cite{kat} for more details.

We are interested the groupoid of elliptic curves over a scheme $S$.  So let us look at isomorphisms of an elliptic curve. Suppose we have elliptic curves $\xymatrix@1{C \ar[r]_p & S \ar@/_/[l]_e }$ and $\xymatrix@1{C' \ar[r]_{p'} & S \ar@/_/[l]_{e'} }$ and chosen elements $x\in p_*\mathcal O(2e)$, $y\in p_*\mathcal O(3e)$, $x'\in p'_*\mathcal O(2e')$, and $y'\in p'_*\mathcal O(3e')$. Furthermore suppose we are given an isomorphism $\varphi:C\to C'$. Then we know that $p=p'\circ\varphi$ and $e'=\varphi\circ e$, so we find that $\varphi$ also induces an isomorphism of sheaves $$p'_*\mathcal O(ne')\stackrel{\simeq}{\longrightarrow}(p'\circ\varphi)_*\mathcal O(\varphi^{-1}\circ ne')=p_*\mathcal O(ne).$$
Hence $\varphi$ induces a change of bases in $p_*\mathcal O(ne)$. Because $x\in p_*\mathcal O(2e)$ and $y\in p_*\mathcal O(3e)$ such a base change is of the form $x=u^2x'+r$ and $y=u^3y'+u^2sx'+t$, for some invertible function $u$. When we substitute this in the Weierstrass equation (\ref{eq:wei}) we find the following relations between the coefficients of the two Weierstrass equations:
\begin{equation}\label{eq:aa}
\begin{aligned}
ua'_1&=a_1+2s,\\
u^2a'_2&=a_2-sa_1+3r-s^2,\\
u^3a'_3&=a_3+ra_1+2t,\\
u^4a'_4&=a_4-sa_3+2ra_2-ta_1-rsa_1-2st-3r^2,\\   
u^6a'_6&=a_6+ra_4-ta_3+r^2a_2-rta_1+r^3-t^2.
\end{aligned}
\end{equation}
When we substitute this in the Weierstrass equation (\ref{eq:norm}) we get relations between the coefficients given by
\begin{eqnarray*}
u^4c'_4&=c_4,\\
u^6c'_6&=c_6,\\
u^{12}\Delta'&=\Delta.
\end{eqnarray*}
To simplify the coming calculations, we set $u$ equal to 1. This means we only look at strict isomorphisms of elliptic curves. We capture the above groupoid structure in the Hopf algebroid $A\rightrightarrows\Gamma$, where $A=\mbb Z[a_1, a_2, a_3, a_4, a_6]$, $\Gamma=A[r,s,t]$, the left unit is given by the canonical inclusion $A\hookrightarrow\Gamma$ and the right unit by the relations in (\ref{eq:aa}). This Hopf algebroid assigns to an affine scheme $\Spec(R)$ a groupoid with $\Hom(A,R)$ as the set of objects and $\Hom(\Gamma,R)$ as the set of morphisms. This assignment is a sheaf
, but it is almost never a stack. 
Let us write $\mathcal M(A,\Gamma)$ for the stackification of this sheaf. 

Every element of $A$ determines a Weierstrass equation (\ref{eq:wei}), which in turn uniquely determines a Weierstrass elliptic curve as well as a choice of local coordinates modulo degree 5. The algebra $\Gamma$ on the other hand paramatrizes the strict isomorphisms of Weierstrass elliptic curves. Hence there is a morphism of stacks $\pi:\mathcal M(A,\Gamma)\longrightarrow\weier$.

This morphism can be used to relate the cohomology of the Hopf algebroid $A\rightrightarrows\Gamma$ with the cohomology $H^q(\mathcal M,\omega^{\otimes\frac12p})$, which we introduced at the end of the previous section. The category of $(A,\Gamma)$-comodules has enough injectives and we define $$H^{q,p}(A,\Gamma)=\Ext^q_{(A,\Gamma)-\text{comod}}(A,A[-p]),$$ where $A[-p]$ is a copy of $A$ with the grading shifted: $(A[-p])_n=A_{n+p}$. Following Example 1.14 of \cite{goe3} there is a sheaf $\mathcal F_p$ defined by $\mathcal F_p\big(\Spec(R)\to\mathcal M(A,\Gamma)\big)=R\otimes_AA[-p]$ such that we have a natural ismorphism to sheaf cohomology $$\Ext^q_{(A,\Gamma)-\text{comod}}(A,A[-p])\simeq H^q(\mathcal M(A,\Gamma),\mathcal F_p).$$
There is an obvious map of stacks $\rho:\weier\to\Spec(\mathbb Z)$. The composition $\mathcal M(A,\Gamma)\stackrel{\pi}{\longrightarrow}\weier\stackrel{\rho}{\longrightarrow}\Spec(\mathbb Z)$ allows us to write $H^q(\mathcal M(A,\Gamma),\mathcal F_p)=R^q(\rho\circ\pi)_*\mathcal F_p$. The Grothendieck spectral sequence corresponding to $(\rho\circ\pi)_*$  starts with $(R^r\rho_*\circ R^s\pi_*)\mathcal F_p$ and converges to $R^{r+s}(\rho\circ\pi)_*\mathcal F_p$. Because all fibers of $\pi$ look like $\mathbb G_m$, we know that $\pi_*$ has no derived functors. Hence the Grothendieck spectral sequence collapses and gives an isomorphism of sheaf cohomologies $$H^q(\mathcal M(A,\Gamma),\mathcal F_p)\simeq H^q\left(\weier,\omega^{\otimes\frac12 p}\right),$$ which is zero when $\frac12p\not\in\mathbb Z$. We conclude that $$H^{q,p}(A,\Gamma) \simeq H^q\left(\weier,\omega^{\otimes\frac12 p}\right).$$

One definition of integral modular forms of weight $k$ (and level 1) is as the global sections of $\omega^{\otimes k}$ over $\weier$; that is, $MF_* = H^0(\weier, \omega^{\otimes *})$ is the ring of integral modular forms. By the above isomorphism we may think of elements in this ring in terms of elements of $A$. When 2 and 3 are invertible equation (\ref{eq:norm}) is universal and the ring of $\mbb Z[\frac16]$-modular forms is the polynomial ring $\mbb Z[\frac16][c_4,c_6]$. The ring of integral modular forms is generated by the modular forms $c_4$, $c_6$, and $\Delta$ of weights 4, 6, and 12 respectively and there is an isomorphism of graded rings
$$MF_*\simeq\mbb Z[c_4, c_6, \Delta]/(c^3_4 - c^2_6 - 1728\Delta).$$
See \cite{del} for a proof. Note that $1728 = 12^3$, which is a manifestation of the fact that the primes 2 and 3 are special in this subject.

\subsection{Computation of the homotopy groups}\label{ssec:hom}

We use the following grading convention for the cohomology of the stacks: $H^{q,p}(\mathcal M)=H^q(\mathcal M,\pi_p^\dag\mathcal O^{\textrm top})=H^q(\mathcal M,\omega^{\otimes\frac12p})$ and $H^q(\mathcal M)=H^{q,*}(\mathcal M)$. We say that the elements of $H^{q,p}$ are of bidegree $(q,p)$. By elements of degree $p$ we mean the elements of $H^{*,p}$ of $\weier$ or $\mell$. The elements of $H^{q,*}(\mathcal M)=H^q(\mathcal M)$ are said to be in filtration $q$.

We may compute the homotopy groups of the spectra of topological modular forms using the convergent spectral sequences
\begin{equation} H^q\left(\mathcal M_{ell},\omega^{\otimes p}\right)\Rightarrow\pi_{2p-q}\TMF,\end{equation}
\begin{equation}\label{eq:ess} H^q\left(\mell, \omega^{\otimes p}\right)\Rightarrow\pi_{2p-q}\Tmf,\end{equation}
\begin{equation} H^q\left(\weier,\omega^{\otimes p}\right)\Rightarrow\pi_{2p-q}\tmf.\end{equation}

The convergence of the first elliptic spectral sequence, converging to $\pi_*\TMF$, is proven in detail in \cite{dou}. The second elliptic spectral sequence, converging to $\pi_*\Tmf$, follows from analogous arguments. The third follows from a $MU_*$-based Adams-Novikov spectral sequence. Its convergence may be checked by arguments given in \cite{goe2}. The edge homomorphism, which maps the $E_\infty$-sheet of a spectral sequence to the zero filtration of the $E_2$-sheet, gives a map $\pi_*\tmf\longrightarrow MF_*$. This is why the elements of these spectra are called \emph{topological modular forms}.

The homotopy groups of the spectrum $\tmf$ have been computed in \cite{bau}, \cite{nau} and \cite{rez}. From the homotopy groups of $\tmf$ we can the retrieve homotopy groups of the periodic spectrum $\TMF$ in the following way. A Weierstrass elliptic curve is non-singular if and only if the discriminant $\Delta$ of the corresponding Weierstrass equation is invertible, so $\mathcal M_{ell}=\weier[\Delta^{-1}]$. When we unravel the definitions we find that $$H^{q,p}\left(\mathcal M_{ell}\right)=H^{q,p}\left(\weier[\Delta^{-1}]\right)=H^q\left(\weier[\Delta^{-1}],(\omega|_{\weier[\Delta^{-1}]})^{\otimes\frac12p}\right).$$ We know that the function $\Delta$ represents a class in $MF_*=H^0(\weier)$ and every section of $\omega|_{\weier[\Delta^{-1}]}$ can uniquely be restricted to a section of $\omega[\Delta^{-1}]$ on $\weier$. So we find that $$H^q\left(\weier[\Delta^{-1}],(\omega|_{\weier[\Delta^{-1}]})^{\otimes\frac12p}\right)=H^q\left(\weier,(\omega[\Delta^{-1}])^{\otimes\frac12p}\right)=H^{q,p}(\weier)[\Delta^{-1}].$$ We conclude that $H^*(\mathcal M_{ell})=H^*(\weier[\Delta^{-1}])=H^*(\weier)[\Delta^{-1}]$, which justifies our notation $\weier[\Delta^{-1}]$ for the substack of $\weier$ where $\Delta$ is invertible. Following the computations in \cite{bau} we find that the elements $\Delta^8$, $\Delta^3$ and $\Delta$ are non-zero-divisor generators in $\pi_*\tmf$ localized at respectively 2, 3 and higher primes. Hence adding the inverse of $\Delta$ to the third spectral sequence gives $H^*(\weier)[\Delta^{-1}]\Rightarrow\pi_*\tmf[\Delta^{-24}]$. When we compare this with the first spectral sequences we see that we can retrieve the periodic spectrum $\TMF$ from the connective spectrum $\tmf$ by the relation $$\pi_*\TMF=\pi_*\tmf[\Delta^{-24}].$$ We remark that the period of $\TMF$ is $24^2=576$.

In \cite{bau} Bauer shows how to compute the cohomology of the moduli stack $\weier$ of Weierstrass elliptic curves. Using his results we are going to compute the cohomology of the moduli stack $\mell$ of generalized elliptic curves. We will then run the spectral sequence (\ref{eq:ess}) to compute the homotopy groups of $\Tmf$.

We have seen that a Weierstrass elliptic curve is non-singular if and only if the discriminant $\Delta$ of the corresponding Weierstrass equation is invertible. A singular Weierstrass elliptic curve has no cuspidal singularities if and only if $c_4$ is invertible. Together the substacks $\weier[\Delta^{-1}]$ and $\weier[c_4^{-1}]$ form an open cover of the moduli stack of generalized elliptic curves: $$\mell=\weier[c_4^{-1}]\bigsqcup_{\weier[c_4^{-1},\Delta^{-1}]}\weier[\Delta^{-1}]\subset \weier.$$
Just like $\Delta$ the function $c_4$ represents a class in $H^0(\weier)$, hence we also know that $H^i(\weier[c_4^{-1}])=H^i(\weier)[c_4^{-1}]$ . In general, for a stack $\mathcal M$ and a function $a\in H^0(\mathcal M)$, we have the equality $H^i(\mathcal M[a^{-1}])=H^i(\mathcal M)[a^{-1}]$. For any ring we write $i_a:R\to R[a^{-1}]$ and we observe that $x\in\ker i_a$ if and only if there exists an $n\in\mathbb N$ such that $xa^n=0$ in $R$. For the above reason we call this the localization of $R$ away from $a$.

In particular we have the maps $$i_\Delta:H^i(\weier)[c_4^{-1}]\longrightarrow H^i(\weier)[c_4^{-1},\Delta^{-1}]$$ and $$i_{c_4}: H^i(\weier)[\Delta^{-1}]\longrightarrow H^i(\weier)[c_4^{-1},\Delta^{-1}].$$
Just as for spaces we can calculate $H^i(\mell)$ with the Mayer-Vietoris long exact sequence
\begin{equation}\label{eq:vie}
\cdots\stackrel{D}{\longrightarrow}H^{i-1}(\weier)[c_4^{-1},\Delta^{-1}]\stackrel{d}{\longrightarrow}H^i(\mell)\stackrel{\rho}{\longrightarrow}H^i(\weier)[c_4^{-1}]\oplus H^i(\weier)[\Delta^{-1}]\stackrel{D}{\longrightarrow}\cdots,\end{equation} where $\rho$ sends a class to the direct sum of its restriction to the substacks and where $D$ sends $v\oplus w$ to $i_\Delta v-i_{c_4} w$. The long exact sequence also induces a short exact sequence
\begin{equation}\label{eq:sh}
 0\longrightarrow\coker^{i-1} D\longrightarrow H^i(\mell)\longrightarrow\ker^i D\longrightarrow0,\end{equation}
where $\coker^{i-1}D$ is our notation for the cokernel of $D$ in the $i-1^{\textrm{th}}$ cohomology filtration and $\ker^iD$ is the kernel of $D$ in the $i^{\textrm{th}}$ cohomology filtration.



\begin{theorem}\label{th:main}
The $\mathbb Z$-module $\pi_*\Tmf$ only has torsion at the primes 2 and 3. In positive degrees, one can write down a basis of the torsion-free part
of $\pi_*\Tmf$ in terms of the standard basis $\{c_4^jc_6^k\Delta^l\}$ of $MF_*$.
The basis consists of elements $2^a3^bc_4^jc_6^k\Delta^l$ in degree $8j + 12k + 24l$ with $j,l\geq0$ and $k\in\{0,1\}$, where \begin{align*}
a&=\begin{cases} 1\quad \textrm{if}\ k=1,\\ 3\quad \textrm{if}\ j=k=0\ \textrm{and}\ l\equiv 1,3,5,7\mod 8,\\ 2\quad \textrm{if}\ j=k=0\ \textrm{and}\ l\equiv 2,6\mod 8,\\ 1 \quad \textrm{if}\ j=k=0\ \textrm{and}\ l\equiv 4\mod 8,\\0\quad \textrm{else},\end{cases}\\
\textrm{and}\\
b&=\begin{cases} 1\quad \textrm{if}\ j=k=0\ \textrm{and}\ l\equiv 1,2 \mod 3,\\ 0\quad \textrm{else}.\end{cases}\end{align*}
In negative degrees, there is a basis of the torsion free part of
$\pi_*\Tmf$ consisting of elements $[2^a3^bc_4^jc_6^k\Delta^l]$ in degree $8j+12k+24l-1$, with $j,l\leq-1$ and $k\in\{0,1\}$, where $a$ and $b$ are now given by \begin{align*}
a&=\begin{cases} 1\quad \textrm{if}\ j<-1\ \textrm{and}\ k=1,\\ -2\quad \textrm{if}\ j=-1,\ k=1\ \textrm{and}\ l\equiv 0,2,4,6 \mod 8,\\ 
-1\quad \textrm{if}\ j=-1,\ k=1\ \textrm{and}\ l\equiv 3,5 \mod 8,\\ 0\quad \textrm{else},\end{cases}\\
\textrm{and}\\
b&=\begin{cases} -1\quad \textrm{if}\ j=-1,\ k=1\ \textrm{and}\ l\equiv 0, 1 \mod 3,\\ 0\quad \textrm{else}.\end{cases}\end{align*}
The values of $m$ and $n$ of the goups $\mathbb Z/(2^m3^n\mathbb Z)$ which build up the torsion can be found in Theorem \ref{th:2} and Theorem \ref{th:3} respectively.
\end{theorem}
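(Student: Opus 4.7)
The plan is to derive the theorem from the elliptic spectral sequence~(\ref{eq:ess}) in the form $H^q(\mell,\omega^{\otimes p})\Rightarrow \pi_{2p-q}\Tmf$, after first computing $H^*(\mell)$ via the Mayer--Vietoris sequence~(\ref{eq:vie}) together with Bauer's description of $H^*(\weier)$ from~\cite{bau}. Since the spectral sequence behaves very differently at the primes $2$ and $3$ than at higher primes, I would split the computation into the three cases announced in the introduction: away from~$6$ (Section~\ref{sec:6in}), localized at~$3$ (Section~\ref{sec:3}), and localized at~$2$ (Section~\ref{sec:2}). Away from~$6$ the spectral sequence collapses at $E_2$ and only $H^0$ and $H^1$ contribute, which immediately establishes the absence of torsion at primes $\geq 5$ and pins down the torsion-free rank on the nose.

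For the torsion-free part in non-negative degrees, the edge homomorphism identifies the contributions with a subgroup of $H^0(\mell,\omega^{\otimes *})$, itself a subring of $MF_*$. The task is then to determine, for each monomial $c_4^jc_6^k\Delta^l$ with $j,l\geq 0$ and $k\in\{0,1\}$, the smallest positive integer $n=2^a 3^b$ such that $n\cdot c_4^jc_6^k\Delta^l$ survives to $E_\infty$ and lifts to a homotopy class. I would read the values of $a$ and $b$ off the $E_\infty$-page by comparison with Bauer's tables for $\tmf$; in particular the exponents $a=3,2,1,0$ on pure powers of $\Delta$ reflect the well-known $2$-primary pattern organized by $l\bmod 8$, and the exponent $b=1$ for $l\not\equiv 0\bmod 3$ reflects the analogous $3$-primary pattern.

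For the torsion-free part in negative degrees, the relevant classes arise from $H^1(\mell,\omega^{\otimes *})$ via the short exact sequence~(\ref{eq:sh}). The bracketed generators $[c_4^jc_6^k\Delta^l]$ with $j,l\leq -1$ and $k\in\{0,1\}$ represent images of the connecting map applied to elements of $H^0(\weier)[c_4^{-1},\Delta^{-1}]$ that lie in neither $H^0(\weier)[c_4^{-1}]$ nor $H^0(\weier)[\Delta^{-1}]$; the total degree $8j+12k+24l-1$ reflects the filtration jump of one. The integer prefactors $2^a 3^b$ in this regime come from the same kind of bookkeeping as in the positive-degree case, now applied to denominators that must be cleared before a cohomology class lifts out of the cokernel.

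The main obstacle is the analysis of differentials and hidden multiplicative extensions at the primes $2$ and $3$. At the prime~$2$ one must carefully track the $d_3$ and higher differentials inherited from Bauer's calculation, identifying the $\eta$-, $\nu$- and $\epsilon$-multiplication patterns that produce the specific powers of $2$ appearing in the statement; at the prime~$3$ one faces an analogous but simpler analysis driven by $\alpha$- and $\beta$-multiplications. I would package these two computations as Theorems~\ref{th:2} and~\ref{th:3}, and the torsion-free statement above then follows at once by reading off the $\omega^{\otimes *}$-module structure on the $E_\infty$-page together with the rational computation away from~$6$.
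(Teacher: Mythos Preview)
Your proposal is correct and follows essentially the same approach as the paper: the proof of Theorem~\ref{th:main} is a summary that defers the work to the three cases (Theorem~\ref{th:6inv} away from~$6$, Theorem~\ref{th:3} at~$3$, Theorem~\ref{th:2} at~$2$), with the away-from-$6$ computation establishing the absence of torsion at primes~$\geq 5$ and fixing the torsion-free ranks, and the localized computations supplying the exponents $a$ and $b$. Your outline of the Mayer--Vietoris/Bauer-comparison strategy for those three theorems also matches the paper's method.
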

\begin{proof}
In Theorem \ref{th:6inv} we will find that $\pi_*\Tmf[\frac16]$ is built up additively by copies of $\mathbb Z[\frac16]$. This implies that there is only torsion at the primes 2 and 3. 
At the same time this determines the multiplicative structure of the torsion free part of $\pi_*\Tmf$ up to powers of two and three.  We find these values of $a$ and $b$ by localizing at two in Theorem \ref{th:2} and at three in Theorem \ref{th:3}. These two theorems also compute the torsion and the rest of the multiplicative structure of $\pi_*\Tmf$.
\end{proof}

\subsection{Acknowledgements}

This paper was written under the supervision of Andr\'e Henriques. I am indebted to him for choice of the subject and many helpful discussions. The idea of how to do this computation is due to Mike Hill. The results are heavily based on the computations of Tilman Bauer on the homotopy groups of $\tmf$. We also used his package {\emph sseq}, which is available on his website, to draw all the pictures in this paper.

\nocite{goe}

\section{Drawing the spectral sequences}\label{sec:leg}
The spectral sequences we will be looking at start with the doubly graded cohomology $H^{q,p}(\mathcal M)$ of some stack $\mathcal M$ as the $E_2$-sheet. We draw this sheet using Adams indexing. In the square with coordinates $(x,y)$ we draw the group $H^{y, x+y}$. All the following sheets will be drawn in the same picture and we will sometimes allow ourselves to talk about elements of $H^{q,p}$, while we actually mean the corresponding element in $E_r^{q,p}$ for some $r\geq 2$. Notice that, because we use the Adams indexing, a given homotopy group $\pi_k \Tmf$ corresponds to a column of the $E_\infty$-sheet.
\begin{itemize}
\item We introduce the following symbols for our drawings localized at a prime $n$.\smallskip

\begin{tabular}{>{$}c<{$}|>{\centering $}p{0.15\textwidth}<{$}>{\centering $}p{0.15\textwidth}<{$}>{\centering $}p{0.15\textwidth}<{$}>{\centering $}p{0.15\textwidth}<{$}} n=3 & \cdot & [c_4^{-3}\Delta] & [c_4^3\Delta^{-1}] & [c_4^{-3}\Delta, c_4^3\Delta^{-1}] \tabularnewline \hline \mbb Z_{(3)} & \square & \boxbackslash & \boxslash & \boxtimes \tabularnewline \mbb Z/3 & \bullet & \obackslash & \oslash & \otimes \end{tabular}\bigskip

\begin{tabular}{>{$}c<{$}|>{\centering $}p{0.15\textwidth}<{$}>{\centering $}p{0.15\textwidth}<{$}>{\centering $}p{0.15\textwidth}<{$}>{\centering $}p{0.15\textwidth}<{$}} n=2 & \cdot & [c_4^{-3} \Delta] & [c_4^3\Delta^{-1}] & [c_4^{-3}\Delta, c_4^3\Delta^{-1}] \tabularnewline \hline \mbb Z_{(2)} & \square & \boxminus & \boxvert & \boxplus \tabularnewline \mbb Z/2 & \bullet & \ominus & \overt & \oplus \end{tabular}\smallskip

To be more precise we say this in words below.
\begin{itemize}
\item A dot $\bullet$ represents a copy of $\mbb Z/n$.
\item A square $\square$ represents a copy of $\mbb Z_{(n)}$.
\item A dashed square $\boxbackslash$ resp. a dashed circle $\obackslash$ represents a copy of $\mbb Z_{(3)}[c_4^{-3}\Delta]$ resp. $\mbb Z/3[c_4^{-3}\Delta]$.
\item A dashed square $\boxslash$ resp. a dashed circle $\oslash$ represents a copy of $\mbb Z_{(3)}[c_4^3\Delta^{-1}]$ resp. $\mbb Z/3[c_4^3\Delta^{-1}]$.
\item A crossed square $\boxtimes$ resp. a crossed circle $\otimes$ represents a copy of $\mbb Z_{(3)}[c_4^{-3}\Delta,c_4^3\Delta^{-1}]$ resp. $\mbb Z_{(3)}[c_4^{-3}\Delta,c_4^3\Delta^{-1}]$.
\item A square with a horizontal line $\boxminus$ resp. a circle with a horizontal line $\ominus$ represents a copy of $\mbb Z_{(2)}[c_4^{-3}\Delta]$ resp. $\mbb Z/2[c_4^{-3}\Delta]$.
\item A square with a vertical line $\boxvert$ resp. a circle with a vertical line $\overt$ represents a copy of $\mbb Z_{(2)}[c_4^3\Delta^{-1}]$ resp. $\mbb Z/2[c_4^3\Delta^{-1}]$.
\item A crossed square $\boxplus$ resp. a crossed circle $\oplus$ represents a copy of $\mbb Z_{(2)}[c_4^{-3}\Delta,c_4^3\Delta^{-1}]$ resp. $\mbb Z_{(2)}[c_4^{-3}\Delta,c_4^3\Delta^{-1}]$.
\end{itemize}
\item Furthermore, a circled dot represents a copy of $\mbb Z/n^2$ and a doubly circled dot a copy of $\mbb Z/n^3$.
\item A line of positive slope denotes a multiplication by $\alpha$, $h_1$ or $h_2$ (see fig. \ref{fig:p31} and \ref{fig:p21}). 
\item An arrow of negative slope denotes a differential. Classes that do not survive in higher sheets of the spectral sequence are drawn in gray.
\end{itemize}
When computing $H^*(\mell)$ localized at the primes 2 and 3 we will find that in most bidegrees either the kernel of $D=(i_{\Delta}, -i_{c_4})$ or the cokernel of $D$ is zero. This implies that the short exact sequence (\ref{eq:sh}) induces an isomorphism for these bidegrees. There are a few cases however where both the kernel and the cokernel are non-zero. In these cases the Mayer-Vietoris sequence (\ref{eq:vie}) or the induced short exact sequence are not enough to determine the additive structure of $H^{q,p}(\mell)$. To resolve this problem we will have to resort to cohomology with $\mbb Z/n$-coefficients, by which we mean  $$H^q\left(\mathcal M, \omega^{\otimes\frac12p}\otimes_{\mbb Z}\mbb Z/n\right).$$ The relation between integral cohomology and cohomology with $\mbb Z/n$-coefficients is given by a version of the Universal Coefficient Theorem.
The short exact sequence $$0\longrightarrow\omega\stackrel{\cdot n}{\longrightarrow}\omega\longrightarrow\omega\otimes_{\mbb Z}\mbb Z/n\longrightarrow 0$$ of sheaves induces a exact sequence for sheaf cohomology $$0\longrightarrow H^q(\mathcal M,\omega^{\otimes\frac12 p})\otimes_{\mbb Z} \mbb Z/n\longrightarrow H^q(\mathcal M,\omega^{\otimes\frac12 p}\otimes_{\mbb Z}\mbb Z/n)\longrightarrow\Tor(H^{q+1}(\mathcal M,\omega^{\otimes\frac12 p}),\mbb Z/n)\longrightarrow0,$$ which splits, but not naturally. In line with the abreviation $H^q(\mathcal M)=H^q(\mathcal M,\omega^{\otimes\frac12p})$ we will adopt the notation $H^q(\mathcal M,\mbb Z/n)=H^q(\mathcal M,\omega^{\otimes\frac12 p}\otimes_{\mbb Z}\mbb Z/n)$.

Because the exact sequence splits we may pretend that $H^q(\mathcal M)$ and $H^{q+2}(\mathcal M)$ are zero for the moment, calculate $H^{q}(\mathcal M,\mbb Z/n)$ and $H^{q+1}(\mathcal M,\mbb Z/n)$ from $H^{q+1}(\mathcal M)$ and take the direct sum over all $q$ afterwards. When $H^{q+1}(\mathcal M)=\mbb Z$, we get $H^q(\mathcal M, \mbb Z/n)\simeq\Tor(\mbb Z,\mbb Z/n)=0$ and $H^{q+1}(\mathcal M,\mbb Z/n)\simeq\mbb Z\otimes\mbb Z/n\simeq\mbb Z/n$. If we have $H^{q+1}(\mathcal M)=\mbb Z/n^k$, then we find $H^q(\mathcal M, \mbb Z/n)\simeq\Tor(\mbb Z/n^k,\mbb Z/n)\simeq\mbb Z/\gcd(n^k,n)=\mbb Z/n$ and $H^{q+1}(\mathcal M,\mbb Z/n)\simeq\mbb Z/n^k\otimes Z/n\simeq\mbb Z/n$. 

\begin{tabular}{l|cc}
 & $\mbb Z$-coefficients & $\mbb Z/n$-coefficients\\
\hline
&&\\
$\mbb Z$ &
\raisebox{-.6\height}{\begin{sseq}[labels=none]{2}{2}\ssmoveto {0} 1 \ssdrop{\square}\end{sseq}} &
\raisebox{-.6\height}{\begin{sseq}[labels=none]{2}{2}\ssmoveto {0} 1 \ssdropbull\end{sseq}}\\
$\mbb Z/n$ &
\raisebox{-.6\height}{\begin{sseq}[labels=none]{2}{2}\ssmoveto {0} 1 \ssdropbull\end{sseq}}&
\raisebox{-.6\height}{\begin{sseq}[labels=none]{2}{2}\ssmoveto {0} 1 \ssdropbull\ssmoveto 1 {0} \ssdropbull\end{sseq}}\\
$\mbb Z/n^2$ &
\raisebox{-.6\height}{\begin{sseq}[labels=none]{2}{2}\ssmoveto {0} 1 \ssdropbull\ssdropextension\end{sseq}}&
\raisebox{-.6\height}{\begin{sseq}[labels=none]{2}{2}\ssmoveto {0} 1 \ssdropbull\ssmoveto 1 {0} \ssdropbull\end{sseq}}\\
$\mbb Z/n^3$ &
\raisebox{-.6\height}{\begin{sseq}[labels=none]{2}{2}\ssmoveto {0} 1 \ssdropbull\ssdropextension\ssdropextension\end{sseq}}&
\raisebox{-.6\height}{\begin{sseq}[labels=none]{2}{2}\ssmoveto {0} 1 \ssdropbull\ssmoveto 1 {0} \ssdropbull\end{sseq}}\\
\end{tabular}


\section{$\pi_*\Tmf$ when 6 is invertible}\label{sec:6in}

We recall that we compute the homotopy groups of the non-connective spectrum of topological modular forms $\Tmf$ using the convergent elliptic spectral sequence
$$ H^q\left(\mell, \omega^{\otimes p}\right)\Rightarrow\pi_{2p-q}\Tmf$$
and the Mayer-Vietoris long exact sequence (\ref{eq:vie}) or the induced short exact sequence (\ref{eq:sh}).

In this section we localize our moduli problem away from 2 and 3. In this case $\tmf$ becomes a complex oriented spectrum and we find that $H^*(\weier)=H^0(\weier)=\mathbb Z[\frac16, c_4, c_6,\Delta]/(c_4^3-c_6^2-1728\Delta=0)$, the polynomial algebra over $\mathbb Z[\frac16]$ with generators $c_4$ and $c_6$. We can also write this additively as $\mathbb Z[\frac16, c_4, \Delta]\otimes E(c_6)$ with $E(c_6)$ the exterior algebra in $c_6$, because $c_6^2$ can be expressed in terms of $\frac16$, $c_4$ and $\Delta$. Finally, we can also rewrite $\mathbb Z[\frac16, c_4, \Delta]\otimes E(c_6)$ as $\mathbb Z[\frac16]\left\{c_4^ac_6^b\Delta^c\right\}\hspace{-2mm}\tiny{\begin{array}{l}a\geq 0 \\ b=0,1 \\ c\geq0\end{array}}$, by which we mean the $\mbb Z[\frac16]$-algebra generated by the elements $c_4^ac_6^b\Delta^c$ with $a\geq0$, $b\in\{0,1\}$ and $c\geq0$. We find furthermore that $$H^*(\weier)[c_4^{-1}]=\mathbb Z[\frac16, c_4^{\pm 1}, \Delta]\otimes E(c_6)=\mathbb Z[\frac16]\left\{c_4^ac_6^b\Delta^c\right\}\hspace{-2mm}\tiny{\begin{array}{l}a\in\mathbb Z \\ b=0,1 \\ c\geq0\end{array}}$$ and $$H^*(\weier)[\Delta^{-1}]=\mathbb Z[\frac16, c_4, \Delta^{\pm 1}]\otimes E(c_6)=\mathbb Z[\frac16]\left\{c_4^ac_6^b\Delta^c\right\}\hspace{-2mm}\tiny{\begin{array}{l}a\geq0 \\ b=0,1 \\ c\in\mathbb Z\end{array}}.$$ Both maps $i_{c_4}$ and $i_\Delta$ have trivial kernel, so $D(v\oplus w)=i_\Delta(v)-i_{c_4}(w)=v-w$. Now we compute the kernel, the image, and the cokernel of $D$ as follows:
\begin{eqnarray*}
\ker D&=&\{v\oplus v: v\in H^*(\weier)\}\simeq H^*(\weier),\\
\im D&=&H^*(\weier)[c_4^{-1}]+H^*(\weier)[\Delta^{-1}]=\mathbb Z[\frac16]\left\{c_4^ac_6^b\Delta^c, c_4^cc_6^b\Delta^a \right\}\hspace{-2mm}\tiny{\begin{array}{l}a\geq0 \\ b=0,1 \\ c\in\mathbb Z\end{array}},\\
\coker D&=&\mathbb Z[\frac16]\left\{c_4^ac_6^b\Delta^c \right\}\hspace{-2mm}\tiny{\begin{array}{l}a<0 \\ b=0,1 \\ c<0 \end{array}}=c_4^{-1}c_6\Delta^{-1}\cdot\mathbb Z[\frac16]\left\{c_4^ac_6^b\Delta^c \right\}\hspace{-2mm}\tiny{\begin{array}{l}a\leq0 \\ b=0,-1 \\ c\leq0\end{array}}.
\end{eqnarray*}
So the basis elements of the cokernel are just $c_4^{-1}c_6\Delta^{-1}$ times the inverses of basis elements of $H^*(\weier)$. We will adopt the notation $$\left(H^i(\weier)\right)^{-1}:=\mbb Z[\frac16]\left\{(c_4^ac_6^b\Delta^c)^{-1}: c_4^ac_6^b\Delta^c\in H^i(\weier)\right\}.$$ We know that $H^i(\weier)=0$ for $i>0$, so we have derived that the $E_2$-sheet of the elliptic spectral sequence is formed by $$H^0(\mell)\simeq H^0(\weier),\qquad H^1(\mell)\simeq c_4^{-1}c_6\Delta^{-1}\cdot\left(H^0(\weier)\right)^{-1}.$$
We denote the basis elements of $H^1(\mell)$ by $[c_4^{-1}c_6\Delta^{-1}t^{-1}]$ where $t$ is a basis element of $H^0(\weier)$. By degree reasons there can be no differentials. Hence the $E_2$-sheet is also the $E_\infty$-sheet and we have proven the following theorem.
\begin{theorem}\label{th:6inv}
Additively $\pi_*\Tmf[\frac16]$ is given by $$\mathbb Z[\frac16]\left\{c_4^jc_6^k\Delta^l \right\}\hspace{-2mm}\tiny{\begin{array}{l}j\geq0 \\ k=0,1 \\ l\geq0 \end{array}}\oplus\mathbb Z[\frac16]\left\{[c_4^jc_6^k\Delta^l] \right\}\hspace{-2mm}\tiny{\begin{array}{l}j\leq-1 \\ k=0,1 \\ l\leq-1 \end{array}}$$ with $c_4^jc_6^k\Delta^l$ in degree $8j+12k+24l$ and $[c_4^jc_6^k\Delta^l]$ in degree $8j+12k+24l-1$. The multiplicative structure is generated by the relation $c_4^3-c_6^2-1728\Delta=0$ and the product on basis elements given by
\begin{eqnarray*}
c_4^jc_6^k\Delta^l\cdot c_4^{j'}c_6^{k'}\Delta^{l'} &=&c_4^{j+j'}c_6^{k+k'}\Delta^{l+l'},\\
c_4^jc_6^k\Delta^l\cdot [c_4^{j'}c_6^{k'}\Delta^{l'}] &=&\begin{cases}[c_4^{j+j'}c_6^{k+k'}\Delta^{l+l'}]\quad&\textrm{if}\quad j+j'\leq-1, l+l'\leq-1, \\ 0\quad&\textrm{if}\quad j+j'>-1, l+l'>-1, \end{cases} \\
\ [c_4^jc_6^k\Delta^l] \cdot[c_4^{j'}c_6^{k'}\Delta^{l'}] &=& 0.
\end{eqnarray*}
\end{theorem}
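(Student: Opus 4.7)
The plan is to assemble the additive statement directly from the computation that immediately precedes the theorem and then read off the multiplicative structure from the $E_\infty$-page.

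First, I would observe that the preceding paragraphs have essentially done the additive work. From the short exact sequence (\ref{eq:sh}), together with the vanishing $H^i(\weier)=0$ for $i>0$ after inverting $6$, the only non-trivial cohomology of $\mell$ lies in filtrations $0$ and $1$:
\begin{align*}
H^0(\mell) &\simeq \ker^0 D \simeq H^0(\weier), \\
H^1(\mell) &\simeq \coker^0 D \simeq c_4^{-1}c_6\Delta^{-1}\cdot\bigl(H^0(\weier)\bigr)^{-1},
\end{align*}
and $H^i(\mell)=0$ for $i\geq 2$. Translating through the grading convention $H^{q,p}(\mell)=H^q(\mell,\omega^{\otimes p/2})$ and the spectral sequence (\ref{eq:ess}), a class in $H^0$ of weight $w=4j+6k+12l$ contributes to $\pi_{2w}\Tmf$, while a class in $H^1$ of weight $w$ contributes to $\pi_{2w-1}\Tmf$. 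This is the degree bookkeeping claimed in the theorem.

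Second, I would verify that the elliptic spectral sequence collapses at $E_2$ and has no extensions. The differentials $d_r\colon E_r^{q,p}\to E_r^{q+r,p+r-1}$ with $r\geq 2$ must vanish because either the source lies in filtration $\geq 2$ (hence is zero) or the target does. Thus $E_2=E_\infty$. Since $H^*(\mell)[\tfrac16]$ is torsion-free as a $\mbb Z[\tfrac16]$-module and concentrated in two adjacent filtrations, the extensions in $\pi_*\Tmf[\tfrac16]$ are forced to split: each $\pi_n\Tmf[\tfrac16]$ is either the copy of $H^0$ in even degree or the copy of $H^1$ in odd degree, never both in the same $n$.

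Third, for the multiplicative structure, I would use that the spectral sequence is multiplicative and the isomorphism $E_\infty\simeq\mathrm{gr}\,\pi_*\Tmf[\tfrac16]$ is one of graded rings. The $H^0\cdot H^0\to H^0$ product is the ring structure on $H^0(\weier)$, giving the relation $c_4^3-c_6^2-1728\Delta=0$ and the first product formula. The $H^0\cdot H^1\to H^1$ product is the evident module structure of $\coker D$ over $H^0(\weier)$: multiplying a representative $c_4^jc_6^k\Delta^l\in H^0(\weier)$ by a cokernel class $[c_4^{j'}c_6^{k'}\Delta^{l'}]$ gives $[c_4^{j+j'}c_6^{k+k'}\Delta^{l+l'}]$ if that monomial is still outside $\im D$ (i.e.\ $j+j'\leq -1$ and $l+l'\leq-1$) and zero otherwise, which is exactly the stated formula. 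Finally, $H^1\cdot H^1$ lands in $H^2(\mell)=0$, so the bracket-bracket product vanishes.

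There is essentially no hard step here: the main content has already been proven in the paragraphs leading up to the statement. The only care required is in checking the grading/indexing conventions and in reindexing the basis of $\coker D$ (via $j=-1-a$, $k=1-b$, $l=-1-c$) to match the form $\{[c_4^jc_6^k\Delta^l]: j,l\leq -1,\ k\in\{0,1\}\}$ displayed in the theorem.
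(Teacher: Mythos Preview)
Your proposal is correct and follows the same approach as the paper: the paper's proof consists precisely of the computations in the paragraphs preceding the theorem, concluding with ``By degree reasons there can be no differentials. Hence the $E_2$-sheet is also the $E_\infty$-sheet and we have proven the following theorem.'' Your write-up in fact supplies more detail than the paper on two points the paper leaves implicit: the absence of extension problems (your observation that $H^0$ contributes to even $\pi_n$ and $H^1$ to odd $\pi_n$, so no column carries both) and the derivation of the multiplicative formulas from the $H^0$-module structure on $\coker D$ and the vanishing of $H^2$.
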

\begin{figure}[H]
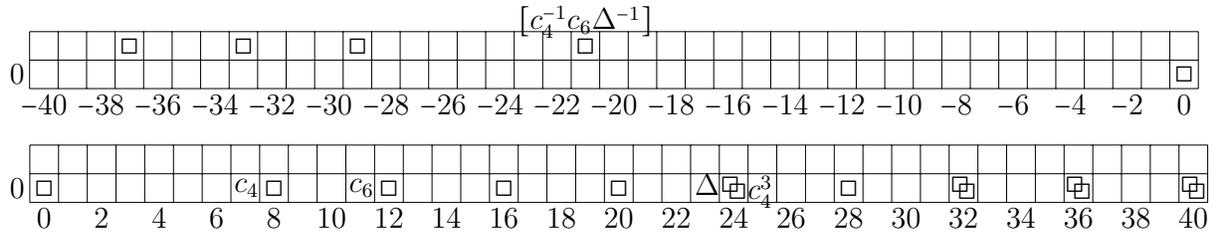

\res{
\begin{sseq}{-40...0}{2}
\ssdrop{\square}
\ssmove 4 {0}
\movefour
\ssdroplabel{c_4\,}
\movefour
\ssdroplabel{c_6\,}
\movefour
\movefour
\movefour
\ssdroplabel{\Delta}
\ssdrop{\square}
\movefour
\movefour
\ssdrop{\square}
\movefour
\ssdrop{\square}
\movefour
\ssdrop{\square}
\ssmoveto {-21} 1
\ssdrop{\square}
\ssdroplabel[U]{[c_4^{-1}c_6\Delta^{-1}]}
\ssmove {-4} 0
\minfour
\minfour
\minfour
\minfour
\ssdrop{\square}
\end{sseq}
}\bigskip

\res{
\begin{sseq}{0...40}{2}
\ssdrop{\square}
\ssmove 4 {0}
\movefour
\ssdroplabel{c_4\,}
\movefour
\ssdroplabel{c_6\,}
\movefour
\movefour
\movefour
\ssdroplabel{\Delta}
\ssdrop{\square}\ssdroplabel[R]{c_4^3}
\movefour
\movefour
\ssdrop{\square}
\movefour
\ssdrop{\square}
\movefour
\ssdrop{\square}
\ssmoveto {-21} 1
\ssdrop{\square}
\ssmove {-4} 0
\minfour
\minfour
\minfour
\minfour
\ssdrop{\square}
\end{sseq}}
\caption{$\pi_*\Tmf$ when 6 is invertible.}
\end{figure}\vspace{-.7cm}
Here every $\square$ denotes a copy of $\mbb Z[\frac16]$ and the elements $c_4$, $c_6$, $\Delta$, $c_4^3$ and $[c_4^{-1}c_6\Delta^{-1}]$ are noted. The $k^\textrm{th}$ column of this $E_\infty$-sheet corresponds to $\pi_k\Tmf$.

\section{$\pi_*\Tmf$ localized at the prime 3}\label{sec:3}

As indicated before, the spectrum $\Tmf$, as well as the other spectra of topological modular forms, behaves rather differently at the primes 2 and 3 than at the other primes. Of these two cases $\pi_*\Tmf$ turns out to be significantly simpler at the prime 3. The following picture from \cite{bau} will be our starting point.

\ret{41}{9}{
\ssdrop{\square}
\ssline 3 1
\ssdropbull
\ssdroplabel[U]{\alpha}
\ssmove 7 1
\ssdropbull \ssdroplabel{\beta} \alphamult
\ssdropbull \alphamult
\ssdropbull \alphamult
\ssdropbull \alphamult
\ssmoveto {0} {0}
\ssmove 4 {0}
\movefour
\ssdroplabel{c_4\,}
\movefour
\ssdroplabel{c_6\,}
\movefour
\movefour
\movefour
\ssdroplabel{\Delta}
\ssname{d}
\ssdrop{\square}
\movefour
\movefour
\ssdrop{\square}
\movefour
\ssdrop{\square}
\movefour
\ssdrop{\square}
\ssgoto{d}
\alphamult
\ssdropbull \alphamult
}{$H^*(\weier)$ localized at 3.\label{fig:p31}}

Here, every $\bullet$ denotes a copy of $\mbb Z/3$ and every $\square$ denotes a copy of $\mbb Z_{(3)}$. Recall that a line connecting two classes indicates a multiplication, in this case by $\alpha$.

\subsection{The cohomology of $\mell$}


Because $c_4\alpha=c_6\alpha=0$, the map $i_{c_4}:H^*(\weier)\longrightarrow H^*(\weier)[c_4^{-1}]$ sends $\alpha$ and $\beta$ to zero. For the elements in filtration zero, $i_{c_4}$ acts as an inclusion, so $$H^*(\weier)[c_4^{-1}]=H^0(\weier)[c_4^{-1}]=\mathbb Z_{(3)}[c_4, \Delta, c_4^{-1}]\otimes E(c_6).$$
We find, for example, that $H^{0,0}(\weier)[c_4^{-1}]\simeq\mbb Z_{(3)}[c_4^{-3}\Delta]$ and $H^{0,4}(\weier)[c_4^{-1}]=c_4^{-1}c_6\cdot H^{0,0}(\weier)[c_4^{-1}]$.

\ret{-8...32}{2}{
\ssmoveto {-8} {0}
\ssdrop{\boxbackslash}
\ssmove 4 {0}
\ssdrop{\boxbackslash}
\ssmove 4 {0}
\ssdrop{\boxbackslash}
\ssmove 4 {0}
\ssdrop{\boxbackslash}
\ssdroplabel{c_4^{-1}c_6\,}
\ssmove 4 {0}
\ssdrop{\boxbackslash}
\ssmove 4 {0}
\ssdrop{\boxbackslash}
\ssmove 4 {0}
\ssdrop{\boxbackslash}
\ssmove 4 {0}
\ssdrop{\boxbackslash}
\ssmove 4 {0}
\ssdrop{\boxbackslash}
\ssmove 4 {0}
\ssdrop{\boxbackslash}
\ssmove 4 {0}
\ssdrop{\boxbackslash}
\ssmove 4 {0}
\ssdrop{\boxbackslash}
\ssmove 4 {0}
\ssdrop{\boxbackslash}
\ssmove 4 {0}
\ssdrop{\boxbackslash}
}{$H^*(\weier)[c_4^{-1}]$  localized at 3.}

Here, every $\boxbackslash$ denotes a copy of $\mbb Z_{(3)}[c_4^{-3}\Delta]$.

On the other hand, $\Delta$ is a not a zero divisor in $H^*(\weier)$, so $i_\Delta : H^*(\weier)\longrightarrow H^*(\weier)[\Delta^{-1}]$ is an inclusion. Writing $H^{0,0}(\weier)[\Delta^{-1}]=\mbb Z_{(3)}[c_4^3\Delta^{-1}]$, the filtration zero looks the same as above, but with $c_4^3\Delta^{-1}$ instead of $c_4^{-3}\Delta$.

\ret{-8...32}{12}{
\ssmoveto {-8} {0}
\ssdrop{\boxslash}
\ssmove 4 {0}
\ssdrop{\boxslash}
\ssmove 4 {0}
\ssdrop{\boxslash}
\ssmove 4 {0}
\ssdrop{\boxslash}
\ssmove 4 {0}
\ssdrop{\boxslash}
\ssdroplabel{c_4\,}
\ssmove 4 {0}
\ssdrop{\boxslash}
\ssdroplabel{c_6\,}
\ssmove 4 {0}
\ssdrop{\boxslash}
\ssmove 4 {0}
\ssdrop{\boxslash}
\ssmove 4 {0}
\ssdrop{\boxslash}
\ssdroplabel{\Delta}
\ssmove 4 {0}
\ssdrop{\boxslash}
\ssmove 4 {0}
\ssdrop{\boxslash}
\ssmove 4 {0}
\ssdrop{\boxslash}
\ssmove 4 {0}
\ssdrop{\boxslash}
\ssmove 4 {0}
\ssdrop{\boxslash}
\ssmoveto {-8} 8
\ssdropbull \alphamult
\ssdropbull \alphamult
\ssmoveto {-4} 4
\ssdropbull \alphamult
\ssdropbull \alphamult
\ssdropbull \alphamult
\ssdropbull \alphamult
\ssmoveto 0 0
\ssline 3 1
\ssdropbull
\ssdroplabel[U]{\alpha}
\ssmove 7 1
\ssdropbull \ssdroplabel{\beta} \alphamult
\ssdropbull \alphamult
\ssdropbull \alphamult
\ssmoveto {24} 0
\alphamult
}{$H^*(\weier)[\Delta^{-1}]$  localized at 3.}

Notice that every $\boxslash$ denotes a copy of $\mbb Z_{(3)}[c_4^3\Delta^{-1}]$, but that a $\bullet$ just denotes a copy of $\mbb Z/3$. 

At this point it is easy to construct $H^*(\weier)[c_4^{-1},\Delta^{-1}]$, for example by starting from $H^*(\weier)[c_4^{-1}]$ and localizing away from $\Delta$.

\ret{-8...32}{2}{
\ssmoveto {-8} {0}
\ssdrop{\boxtimes}
\ssmove 4 {0}
\ssdrop{\boxtimes}
\ssmove 4 {0}
\ssdrop{\boxtimes}
\ssmove 4 {0}
\ssdrop{\boxtimes}
\ssmove 4 {0}
\ssdrop{\boxtimes}
\ssdroplabel{c_4\,}
\ssmove 4 {0}
\ssdrop{\boxtimes}
\ssdroplabel{c_6\,}
\ssmove 4 {0}
\ssdrop{\boxtimes}
\ssmove 4 {0}
\ssdrop{\boxtimes}
\ssmove 4 {0}
\ssdrop{\boxtimes}
\ssdroplabel{\Delta}
\ssmove 4 {0}
\ssdrop{\boxtimes}
\ssmove 4 {0}
\ssdrop{\boxtimes}
\ssmove 4 {0}
\ssdrop{\boxtimes}
\ssmove 4 {0}
\ssdrop{\boxtimes}
\ssmove 4 {0}
\ssdrop{\boxtimes}
}{$H^*(\weier)[c_4^{-1}, \Delta^{-1}]$  localized at 3.}

Here, every $\boxtimes$ represents a copy of $\mbb Z_{(3)}[c_4^{-3}\Delta,c_4^3\Delta^{-1}]$.

To compute $H^*(\mell)$ we observe that $i_\Delta: H^*(\weier)[c_4^{-1}]\longrightarrow H^*(\weier)[c_4^{-1}, \Delta^{-1}]$ is just the inclusion and that $i_{c_4}: H^*(\weier)[\Delta^{-1}]\longrightarrow H^*(\weier)[c_4^{-1}, \Delta^{-1}]$ maps $\alpha$ and $\beta$ to zero and is an inclusion in filtration zero. The kernel of $D: H^*(\weier)[c_4^{-1}]\oplus H^*(\weier)[\Delta^{-1}]\longrightarrow H^*(\weier)[c_4^{-1}, \Delta^{-1}]$ is the sum of two pieces. The first piece consists of the elements of $H^*(\weier)[\Delta^{-1}]$ in filtration one or higher and the second piece consists of those elements that appear in filtration zero of both $H^*(\weier)[c_4^{-1}]$ and $H^*(\weier)[\Delta^{-1}]$. So we end up with
$$\ker D=\left(0\oplus\alpha\cdot\mbb Z/3[\beta,\Delta^{\pm1}]\right)\bigoplus\left(0\oplus\beta\cdot\mbb Z/3[\beta,\Delta^{\pm1}]\right)\bigoplus\left\{v\oplus v: v\in \mathbb Z_{(3)}[c_4, \Delta]\otimes E(c_6) \right\}.$$ 
For the cokernel of $D$, we remark first that the image of $D$ is given by
$$\im D=\mbb Z_{(3)}\left\{c_4^ac_6^b\Delta^c, c_4^cc_6^b\Delta^a\right\}\hspace{-2mm}\tiny{\begin{array}{l}a\in\mathbb Z \\ b=0,1 \\ c\geq0\end{array}}\subset H^*(\weier)[c_4^{-1},\Delta^{-1}]=H^0(\weier)[c_4^{-1},\Delta^{-1}].$$
The cokernel is now found by taking the quotient $H^0(\weier)[c_4^{-1},\Delta^{-1}]/\im D$. We continue to compute $H^*(\mell)$ using the short exact sequence (\ref{eq:sh}) by looking at different filtrations separately. 

For $i\neq1$ we have $H^{i-1}(\weier)[c_4^{-1},\Delta^{-1}]=0$. Hence $\coker^{i-1}D=0$ and $H^i(\mell)=\ker^i D$.

On the other hand for $i=1$ we find $\ker^1 D=\alpha\cdot\mbb Z_{(3)}[\Delta^{\pm1}]$, which only has elements in bidegrees $(1,24j+4)$ for all $j\in\mbb Z$, and $\coker^0 D\simeq c_4^{-1}c_6\Delta^{-1}\cdot\left(H^0(\weier)\right)^{-1}$.
So $H^1(\mell)$ is given by $H^1(\weier)$ in the positive degrees and looks in general like $c_4^{-1}c_6\Delta^{-1}\cdot\left(H^0(\weier)\right)^{-1}$ in the negative degrees, with the exception of $H^{1,-24j+4}(\weier)$ with $j>0$. Indeed, these are the cases where both the cokernel and the kernel are non-zero and we find for the short exact sequence
\begin{align*}0\quad&\longrightarrow& H^{0, 24(j-1)}(\weier)\quad&\longrightarrow& H^{1,-24j+4}(\mell)\quad&\longrightarrow&\alpha\Delta^{-j}\cdot\mbb Z/3\quad&\longrightarrow&0,\\ &&t\quad&\longmapsto&[c_4^{-1}c_6\Delta^{-1}t].\qquad \end{align*}
This leaves us with the following two possibilities: \begin{equation}\label{eq:is3} H^{1, -24j+4}(\mell)\simeq\begin{cases} \mbb Z_{(3)}^j&\:\text{or}\\ \mbb Z_{(3)}^j\times \mbb Z/3&.\end{cases}\end{equation}

\subsection{The mod 3 cohomology of $\mell$}

The trick to find out which of the two cases in (\ref{eq:is3}) is true, is to compute the same groups with $\mbb Z/3$-coefficients; that is, we are going to compute $H^*(\mell, \mbb Z/3)$, which is our shorthand for $H^q(\mell,\omega^{\otimes\frac12p}\otimes_{\mbb Z}\mbb Z/3)$. We start by using the Universal Coefficient Theorem on the first diagram of this section, Figure \ref{fig:p31}.

\ret{41}{9}{
\ssdropbull
\ssline 3 1
\ssdropbull
\ssdroplabel[U]{\alpha}
\ssmove 7 1
\ssdropbull \ssdroplabel{\beta} \alphamult
\ssdropbull \alphamult
\ssdropbull \alphamult
\ssdropbull \alphamult
\ssmoveto {11} 1
\ssdropbull \alphamult
\ssdropbull \alphamult
\ssdropbull \alphamult
\ssdropbull \alphamult
\ssmoveto {0} {0}\ssmove 4 {0}
\ssdropbull \ssdroplabel[L]{a_2} \ssmove 4 {0}
\ssdropbull \ssmove 4 {0}
\ssdropbull \ssmove 4 {0}
\ssdropbull \ssmove 4 {0}
\ssdropbull \ssmove 4 {0}
\ssdropbull \ssdroplabel{\Delta} \ssname{d}
\ssdropbull \ssmove 4 {0}
\ssdropbull \ssdropbull \ssmove 4 {0}
\ssdropbull \ssdropbull \ssmove 4 {0}
\ssdropbull \ssdropbull \ssmove 4 {0}
\ssdropbull \ssdropbull \ssmove 4 {0}
\ssdropbull \ssdropbull \ssmove 4 {0}
\ssgoto{d}
\alphamult
\ssdropbull \alphamult
\ssmoveto {35} 1
\ssdropbull \alphamult
}{$H^*(\weier, \mbb Z/3)$ localized at 3.}

Every $\bullet$ denotes a copy of $\mbb Z/3$ and we labelled the new generator in bidegree $(0,4)$ by $a_2$.


We approach the computation of $H^*(\mell, \mbb Z/3)$ in the same manner as the computation of $H^*(\mell)$. So we start by localizations away from $a_2$ and $\Delta$ and then we use the Mayer-Vietoris sequence (\ref{eq:vie}). It is clear that $a_2\alpha=0$ and $a_2^2\beta=0$, so $H^*(\weier, \mbb Z/3)[a_2^{-1}]$ is very easy.

\ret{-8...32}{2}{
\ssmoveto {-8} {0}
\ssdrop{\obackslash} \ssmove 4 {0}
\ssdrop{\obackslash} \ssdroplabel[U]{a_2^{-1}}\ssmove 4 {0}
\ssdrop{\obackslash} \ssmove 4 {0}
\ssdrop{\obackslash} \ssdroplabel[L]{a_2} \ssmove 4 {0}
\ssdrop{\obackslash} \ssmove 4 {0}
\ssdrop{\obackslash} \ssmove 4 {0}
\ssdrop{\obackslash} \ssmove 4 {0}
\ssdrop{\obackslash} \ssmove 4 {0}
\ssdrop{\obackslash} \ssmove 4 {0}
\ssdrop{\obackslash} \ssmove 4 {0}
\ssdrop{\obackslash} \ssmove 4 {0}
\ssdrop{\obackslash} \ssmove 4 {0}
}{$H^*(\weier, \mbb Z/3)[a_2^{-1}]$.}

Here, every $\obackslash$ represents a copy of $\mbb Z/3[a_2^{-6}\Delta]$. The computation of $H^*(\weier, \mbb Z/3)[\Delta^{-1}]$ isn't that complicated either and we find the following picture.

\ret{-8...32}{8}{
\ssmoveto {-8} {0}
\ssdrop{\oslash} \ssmove 4 {0}
\ssdrop{\oslash}\ssdroplabel[U]{\Delta^{-1}a_2^5} \ssmove 4 {0}
\ssdrop{\oslash} \ssmove 4 {0}
\ssdrop{\oslash} \ssdroplabel[R]{a_2} \ssmove 4 {0}
\ssdrop{\oslash} \ssmove 4 {0}
\ssdrop{\oslash} \ssmove 4 {0}
\ssdrop{\oslash} \ssmove 4 {0}
\ssdrop{\oslash} \ssmove 4 {0}
\ssdrop{\oslash} \ssdroplabel{\Delta} \ssname{d} \ssmove 4 {0}
\ssdrop{\oslash} \ssmove 4 {0}
\ssdrop{\oslash} \ssmove 4 {0}
\ssdrop{\oslash} \ssmove 4 {0}
\ssmoveto {-24} {0}
\ssdrop{\oslash} \alphamult
\ssdropbull \alphamult
\ssdropbull \alphamult
\ssdropbull \alphamult
\ssdropbull \alphamult
\ssmoveto {-13} 1
\ssdropbull \alphamult
\ssdropbull \alphamult
\ssdropbull \alphamult
\ssdropbull \alphamult
\ssdropbull \alphamult
\ssmoveto {0} {0}
\ssline 3 1
\ssdropbull
\ssdroplabel[U]{\alpha}
\ssmove 7 1
\ssdropbull \ssdroplabel{\beta} \alphamult
\ssdropbull \alphamult
\ssdropbull \alphamult
\ssdropbull \alphamult
\ssmoveto {11} 1
\ssdropbull \alphamult
\ssdropbull \alphamult
\ssdropbull \alphamult
\ssdropbull \alphamult
\ssdropbull \alphamult
\ssgoto{d} \alphamult
\ssdropbull \alphamult
\ssdropbull \alphamult
\ssdropbull \alphamult
\ssdropbull \alphamult
}{$H^*(\weier, \mbb Z/3)[\Delta^{-1}]$.}

In this picture every $\oslash$ denotes a copy of $\mbb Z/3[a_2^6\Delta^{-1}]$ and a $\bullet$ still denotes a copy of $\mbb Z/3$. We construct $H^*(\weier, \mbb Z/3)[a_2^{-1},\Delta^{-1}]$ by inverting $\Delta$ in $H^*(\weier, \mbb Z/3)[a_2^{-1}]$.

\ret{-8...32}{2}{
\ssmoveto {-8} {0}
\ssdrop{\otimes} \ssmove 4 {0}
\ssdrop{\otimes} \ssdroplabel[U]{a_2^{-1}}\ssmove 4 {0}
\ssdrop{\otimes} \ssmove 4 {0}
\ssdrop{\otimes} \ssdroplabel[L]{a_2} \ssmove 4 {0}
\ssdrop{\otimes} \ssmove 4 {0}
\ssdrop{\otimes} \ssmove 4 {0}
\ssdrop{\otimes} \ssmove 4 {0}
\ssdrop{\otimes} \ssmove 4 {0}
\ssdrop{\otimes} \ssmove 4 {0}
\ssdrop{\otimes} \ssmove 4 {0}
\ssdrop{\otimes} \ssmove 4 {0}
\ssdrop{\otimes} \ssmove 4 {0}
}{$H^*(\weier, \mbb Z/3)[a_2^{-1}, \Delta^{-1}]$.}

Every $\otimes$ represents a copy of $\mbb Z/3[a_2^6\Delta^{-1}, a_2^{-6}\Delta]$. Now we can compute the kernel and the cokernel of the difference map $D$ used in the Mayer-Vietoris long exact sequnce (\ref{eq:vie}). That isn't necessary, though, because we are only interested in very specific parts of the picture. More specifically we see that $\coker^{0, -20}D=0$, so
$$H^{1,-20}(\mell,\mbb Z/3)\simeq\ker^{1,-20}D\simeq\mbb Z/3.$$
It follows by the Universal Coefficient Theorem that $H^{1,-20}(\mell)\simeq\mbb Z_{(3)}$ and by a similar computation that $$H^{1,-24 j+4}(\mell)\simeq\mbb Z_{(3)}^j,\qquad\textrm{ for all } j\in\mbb N.$$

\subsection{The elliptic spectral sequence}

We have resolved the issue stated in (\ref{eq:is3}), so let us look at the multiplicative structure of $H^*(\mell)$ concerning the groups $H^{1,-24 j+4}(\mell)$. These groups are part of the short exact sequence
$$0\longrightarrow H^{0, 24(j-1)}(\weier)\longrightarrow H^{1,-24 j+4}(\mell)\longrightarrow\alpha\Delta^{-j}\cdot\mbb Z/3\longrightarrow0,$$ of which we now know that it looks like $$0\longrightarrow\mbb Z_{(3)}^j\longrightarrow\mbb Z_{(3)}^j\longrightarrow\mbb Z/3\longrightarrow0.$$ The first map is multiplication by 3 in one of the copies of $\mbb Z_{(3)}$ and the second map is the projection to that copy modulo 3. When we identify $H^{0, 24(j-1)}(\weier)$ with its image in $H^{1,-24 j+4}(\mell)$, then a basis element $t\in H^{0, 24(j-1)}(\weier)$ is sent to an element, denoted as before by $[c_4^{-1}c_6\Delta^{-1}t^{-1}]$, which is three times the generator of $H^{1,-24 j+4}(\mell)$. This is why we write $[\frac13c_4^{-1}c_6\Delta^{-1}t^{-1}]$ for the generator of $H^{1,-24 j+4}(\mell)$ and we draw a $\square$ with a $\frac13$ in it for this bidegree. On the other hand, the generator of $H^{1,-24 j+4}(\mell)$ is sent to $\alpha\Delta^{-j}$, which is why we also denote it by $\langle\alpha\Delta^{-j}\rangle$. In general, we will use the notation $\langle xy\rangle$ for an element of $H^*(\mell)$ when $x$ or $y$ is not in $H^*(\mell)$ and when this element corresponds to the product $xy$ in the kernel of $D$.


\begin{figure}[H]
\res{
\begin{sseq}[xlabelstep=4]{-52...32}{0...15}
\ssdrop{\square}
\ssmove 4 {0}
\movefour
\ssdroplabel{c_4\,}
\movefour
\ssdroplabel{c_6\,}
\movefour
\movefour
\movefour
\ssdroplabel{\Delta}
\ssname{d}
\ssdrop{\square}
\movefour
\movefour
\ssdrop{\square}
\movefour
\ssdrop{\square}
\movefour
\ssdrop{\square}

\ssmoveto {-21} 1
\ssdropboxed{\resizebox{1.3mm}{!}{$\,\frac13\,$}}
\ssdroplabel[RU]{\hspace{-2mm}\mathtt{\tiny [\frac13c_4^{-1}c_6\Delta^{-1}]}}
\ssmove {-4} 0
\minfour
\minfour
\minfour
\minfour
\ssmove {-4} 0
\ssdrop{\square}
\ssname{-d}
\ssdroplabel[LU]{\mathtt{\tiny [c_{4}^{-4}c_{6}\Delta^{-1}]}\hspace{-2mm}}
\ssdropboxed{\resizebox{1.3mm}{!}{$\,\frac13\,$}}
\ssdroplabel[RD]{\hspace{-2mm}\mathtt{\tiny [\frac13c_4^{-1}c_6\Delta^{-2}]}}
\minfour
\minfour

\ssgoto{d}\alphamult
\ssdropbull \alphamult

\ssmoveto 0 0
\ssline 3 1
\ssdropbull
\ssdroplabel[U]{\alpha}
\ssmove 7 1
\ssdropbull \ssdroplabel{\beta} \alphamult
\ssdropbull \alphamult
\ssdropbull \alphamult

\ssmoveto {-14} 2
\ssdropbull \alphamult
\ssdropbull \alphamult
\ssdropbull \alphamult
\ssdropbull \alphamult
\ssdropbull \alphamult

\ssmoveto {-38} 2
\ssdropbull \alphamult
\ssdropbull \alphamult
\ssdropbull \alphamult
\ssdropbull \alphamult
\ssdropbull \alphamult
\ssdropbull \alphamult
\ssdropbull \alphamult
\ssdropbull \alphamult

\ssmoveto {-62} 2
\ssdropbull \alphamult
\ssdropbull \alphamult
\ssdropbull \alphamult
\ssdropbull \alphamult
\ssdropbull \alphamult
\ssdropbull \alphamult
\ssdropbull \alphamult
\ssdropbull \alphamult

\ssmoveto {-86} 2
\ssdropbull \alphamult
\ssdropbull \alphamult
\ssdropbull \alphamult
\ssdropbull \alphamult
\ssdropbull \alphamult
\ssdropbull \alphamult
\ssdropbull \alphamult
\ssdropbull \alphamult

\ssmoveto {-110} 2
\ssdropbull \alphamult
\ssdropbull \alphamult
\ssdropbull \alphamult
\ssdropbull \alphamult
\ssdropbull \alphamult
\ssdropbull \alphamult
\ssdropbull \alphamult
\ssdropbull \alphamult
\end{sseq}}
\caption{$H^*(\mell)$  localized at 3.}
\end{figure}\vspace{-.7cm}

Now that we have $H^*(\mell)$, we proceed to compute the differentials in the elliptic spectral sequence that abuts to $\pi_*\Tmf$. We make use, again, of the computations in \cite{bau}. The differentials which are calculated by Bauer for $H^*(\weier)$ are also found periodically repeated in $H^*(\weier)[\Delta^{-1}]$. 
We can take the pull-back of these differentials via the map $$\rho: H^i(\mell)\longrightarrow H^i(\weier)[c_4^{-1}]\oplus H^i(\weier)[\Delta^{-1}].$$ 
Define the $\mathbb Z$-module $T^+$, which is depicted in Figure \ref{fig:3pos}, as 
\begin{multline}\label{eq:t+}
T^+=\mathbb Z_{(3)}\left\{c_4^jc_6^k\Delta^l \right\}\hspace{-2mm}{\tiny{\begin{array}{l}j\geq0 \\ k\in\{0,1\} \\ l\in\{0,1,2\}\\ j+k>0 \end{array}}}\oplus\mathbb Z_{(3)}\oplus3\Delta\mathbb Z_{(3)}\oplus3\Delta^2\mathbb Z_{(3)}%
\oplus\mathbb Z/3\mathbb Z\left\{\alpha,\beta,\alpha\beta,\beta^2,\beta^3,\beta^4,\alpha\Delta,\alpha\beta\Delta\right\},
\end{multline}
with $c_4^jc_6^k\Delta^l$ in degree $8j+12k+24l$ and with $\alpha^a\beta^b\Delta^l$ in degree $3a+10b+24l$. 
Define the $\mathbb Z$-module $T^-$, which is depicted in Figure \ref{fig:3neg}, as 
\begin{multline}\label{eq:t-}
T^-=\mathbb Z_{(3)}\left\{[c_4^jc_6^k\Delta^l]\right\}\hspace{-2mm}{\tiny{\begin{array}{l}j\leq-1 \\ k\in\{0,1\} \\ l\in\{-3,-2-1\}\\ j+k<0 \end{array}}}\oplus[c_4^{-1}c_6\Delta^{-1}]\mathbb Z_{(3)}\oplus[\frac13c_4^{-1}c_6\Delta^{-2}]\mathbb Z_{(3)}\oplus[\frac13c_4^{-1}c_6\Delta^{-3}]\mathbb Z_{(3)}\\%
\oplus\mathbb Z/3\mathbb Z\left\{\langle\beta\Delta^l\rangle,\langle\alpha\beta\Delta^l\rangle,\langle\beta^2\Delta^l\rangle,\langle\alpha\beta^2\Delta^l\rangle,\langle\beta^3\Delta^l\rangle, \langle\beta^4\Delta^l\rangle,\langle\alpha\beta\Delta^{l+1}\rangle,\langle\alpha\beta^2\Delta^{l+1}\rangle\right\},
\end{multline}
with $[c_4^jc_6^k\Delta^l]$ in degree $8j+12k+24l-1$ and with $\langle\alpha^a\beta^b\Delta^l\rangle$ in degree $3a+10b+24l$. In this section we have proven the following theorem.

\begin{theorem}\label{th:3}
Additively $\pi_*\Tmf_{(3)}$ is given by 
$$\pi_*\Tmf_{(3)}=(\mathbb Z[\Delta^3]\otimes T^+)\oplus(\mathbb Z[\Delta^{-3}]\otimes T^-),$$
with $T^+$ and $T^-$ as defined above in equations (\ref{eq:t+}) and (\ref{eq:t-}).

Multiplication of elements $c_4^jc_6^k\Delta^l$, $[c_4^jc_6^k\Delta^l]$, $\alpha^a\beta^b\Delta^l$ and $\langle\alpha^a\beta^b\Delta^l\rangle$ is done without taking brackets into account, and then adding brackets such that the result is an element of $\pi_*\Tmf_{(3)}$. If this cannot be done, then the result is 0, except for 
\begin{align}
\alpha\cdot \alpha\Delta&=\beta^3,\label{eq:aad3}\\
\alpha\cdot [\frac13c_4^{-1}c_6\Delta^l]=\alpha\langle\alpha\Delta^l\rangle&=\langle\beta^3\Delta^{l-1}\rangle  \quad\textrm{with}\quad l<-1,l\equiv1\mod 3, \label{eq:a[]3}\\
c_6^2&=c_4^3-1728\Delta,
\end{align}
and multiplicative consequences of these equations. Equations \ref{eq:aad3} and \ref{eq:a[]3} are also depicted in the figures.
\end{theorem}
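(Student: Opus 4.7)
The plan is to run the elliptic spectral sequence (\ref{eq:ess}) using as $E_2$-page the $3$-local cohomology $H^*(\mell)$ assembled in the preceding subsections. Three things must be determined: the differentials, the surviving classes on the $E_\infty$-page, and the multiplicative structure together with the additive extensions.

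First I would import Bauer's differentials from \cite{bau} for the analogous spectral sequence computing $\pi_*\tmf_{(3)}$. These differentials act on the $\alpha\beta^b\Delta^l$-towers and are $\Delta$-periodic, so they extend verbatim to $H^*(\weier)[\Delta^{-1}]$. The restriction map $\rho\colon H^i(\mell)\to H^i(\weier)[c_4^{-1}]\oplus H^i(\weier)[\Delta^{-1}]$ is compatible with differentials, and since $\alpha$ and $\beta$ are annihilated by $c_4$ and hence vanish in the first summand, the differentials on the torsion towers of $H^*(\mell)$ are determined by their image in $H^*(\weier)[\Delta^{-1}]$. The torsion-free polynomial generators $c_4^jc_6^k\Delta^l$ and their negative-degree counterparts $[c_4^jc_6^k\Delta^l]$ are permanent cycles for degree reasons.

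Next I would organize the output by the $\Delta^{\pm 3}$-periodicity inherited from $\pi_*\TMF_{(3)}=\pi_*\tmf[\Delta^{-24}]_{(3)}$. One period on each side yields the modules $T^+$ and $T^-$ listed in (\ref{eq:t+}) and (\ref{eq:t-}): the polynomial generators within one period, the surviving $\alpha^a\beta^b\Delta^l$ classes, and the additional classes $[\frac13 c_4^{-1}c_6\Delta^l]$ arising from the non-split Mayer-Vietoris extensions (\ref{eq:is3}) that were resolved above via mod-$3$ cohomology. In positive degrees the multiplicative structure matches $\pi_*\tmf_{(3)}$; in particular (\ref{eq:aad3}) is Bauer's relation $\alpha^2\Delta=\beta^3$. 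Most cross-products between the positive- and negative-degree pieces vanish because the expected result lies in neither $T^+$ nor $T^-$ after $\Delta^{\pm 3}$-translation.

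The main obstacle is the extension product (\ref{eq:a[]3}), involving the class $[\frac13 c_4^{-1}c_6\Delta^l]$ which is not in the image of $\rho$ and therefore cannot be detected directly in either Mayer-Vietoris summand. My approach is to use the short exact sequence (\ref{eq:sh}) defining this class: $[\frac13 c_4^{-1}c_6\Delta^l]$ projects to the generator $\langle\alpha\Delta^l\rangle$ of the filtration-$1$ quotient, so $\alpha\cdot[\frac13 c_4^{-1}c_6\Delta^l]$ projects to $\alpha\cdot\langle\alpha\Delta^l\rangle=\langle\alpha^2\Delta^l\rangle=\langle\beta^3\Delta^{l-1}\rangle$, using Bauer's relation inside $H^*(\weier)[\Delta^{-1}]$. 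A bidegree check confirms that the product lies entirely in filtration-$2$, so the projection is an equality; the congruence $l\equiv 1 \mod 3$ selects precisely the bidegrees where the $[\frac13\cdots]$ class is present. The remaining relations then follow by compatibility with the ring structure on the $E_\infty$-page.
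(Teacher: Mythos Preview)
Your overall strategy---pulling back Bauer's differentials through the restriction map $\rho$, organizing by $\Delta^{\pm3}$-periodicity, and reading off $T^\pm$---matches the paper's. There are, however, two genuine gaps.

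First, the claim that the torsion-free classes $c_4^jc_6^k\Delta^l$ are permanent cycles ``for degree reasons'' is incorrect as stated: the pure powers $\Delta^l$ (with $j=k=0$) do support $d_5$- and $d_9$-differentials, and this is precisely why $3\Delta$ and $3\Delta^2$, rather than $\Delta$ and $\Delta^2$, appear in $T^+$. Only monomials containing a positive power of $c_4$ or $c_6$ are protected, because $c_4\alpha=c_6\alpha=0$ kills the potential targets. The analogous remark applies to the $[\frac13c_4^{-1}c_6\Delta^l]$ classes in negative degrees, one of which supports a $d_9$ in each period.

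Second, and more seriously, your argument for the hidden extension (\ref{eq:a[]3}) does not work. You write that $\alpha\cdot\langle\alpha\Delta^l\rangle=\langle\alpha^2\Delta^l\rangle=\langle\beta^3\Delta^{l-1}\rangle$ ``using Bauer's relation inside $H^*(\weier)[\Delta^{-1}]$''. But on the $E_2$-page one has $\alpha^2=0$; the identity $\alpha\cdot(\alpha\Delta)=\beta^3$ is not an equation in $H^*(\weier)$ or in $H^*(\weier)[\Delta^{-1}]$ at all---it is a \emph{hidden} extension in $\pi_*\tmf$, relating a filtration-$2$ product to a filtration-$6$ class. Your filtration check (``the product lies entirely in filtration~$2$'') therefore establishes the opposite of what you need: on $E_\infty$ the product vanishes in filtration~$2$, and one must produce a representative in higher filtration. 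The correct mechanism is the ring map $\pi_*\Tmf\to\pi_*\TMF$ induced by the open inclusion $\mathcal M_{ell}\hookrightarrow\mell$, which sends $\langle\alpha\Delta^l\rangle\mapsto\alpha\Delta^l$ and $\langle\beta^3\Delta^{l-1}\rangle\mapsto\beta^3\Delta^{l-1}$. Bauer's hidden extension holds in $\pi_*\tmf$ and persists $\Delta$-periodically in $\pi_*\TMF$; since this comparison map is injective on the relevant $3$-torsion groups, the relation pulls back to $\pi_*\Tmf$. This is the same comparison you already invoke for the differentials, but it must be applied as a map of spectral sequences (or of homotopy rings), not merely as an identification on the $E_2$-page.
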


\begin{figure}[H]
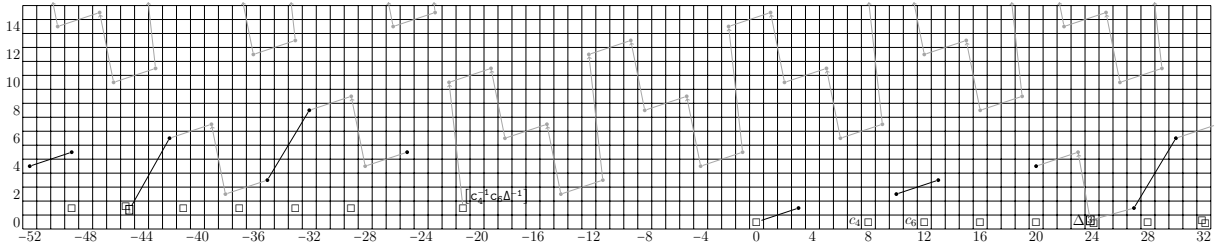

\res{
\begin{sseq}[xlabelstep=4]{-52...32}{0...15}
\ssdrop{\square}
\ssmove 4 {0}
\movefour
\ssdroplabel{c_4\,}
\movefour
\ssdroplabel{c_6\,}
\movefour
\movefour
\ssmove 4 {0}
\ssdropboxed{\scriptstyle 3}
\ssname{del}
\ssdroplabel{\Delta}
\ssdrop{\square}
\movefour
\movefour
\ssdrop{\square}
\movefour
\ssdrop{\square}
\movefour
\ssdrop{\square}

\ssmoveto {-21} 1
\ssdrop{\square}
\ssdroplabel[RU]{\hspace{-1mm}\mathtt{\tiny [c_4^{-1}c_6\Delta^{-1}]}}
\ssmove {-4} 0
\minfour
\minfour
\minfour
\minfour
\ssmove {-4} 0
\ssdrop{\square}
\ssdropboxed{\resizebox{1.3mm}{!}{$\,\frac13\,$}}
\ssname{mindel}
\minfour
\minfour

\ssmoveto 0 0
\alphamult
\ssdropbull \alphamult
\ssdropbull \gray \alphamult \black
\ssdropbull \gray \alphamult \black
\ssdropbull

\ssgoto{del}
\gray \ssarrow {-1} 5
\ssgoto{del}
\ssline 3 1
\black \ssdropbull \ssline 3 5

\ssmoveto {-21} 1
\threestep
\threestep
\threestep
\threestep
\threestep
\gray \ssarrow {-1} 9
\ssdropbull \ssline 3 1
\ssdropbull \ssline 1 {-5}
\ssdropbull \black

\ssmoveto {-52} 4
\ssdropbull
\ssmove 3 1 \ssdropbull \ssstroke
\ssmove 7 1
\ssdropbull \gray
\ssline 3 1
\ssdropbull
\ssmove 7 1 \black
\ssdropbull \gray
\ssline 3 1
\ssdropbull
\ssmove 7 1

\ssmoveto {-38} 2
\gray \ssdropbull \firstrightatthree
\gray \ssdropbull \ssarrow {-1} 5 \ssmove 1 {-5}
\ssline 3 1
\black \ssdropbull

\ssgoto{mindel}
\ssline 3 5

\ssmoveto {-50} {14}
\gray
\ssdropbull
\ssarrow {-1} 5
\ssdropbull
\ssmove 1 {-5}
\ssline 3 1
\ssdropbull
\ssline[arrowfrom] 1 {-5}
\ssdropbull
\ssline 3 1
\ssdropbull
\threestep
\threestep
\threestep
\threestep
\end{sseq}}
\caption{$\pi_*\Tmf$  localized at 3.}
\end{figure}\vspace{-.7cm}

The next two diagrams are close-ups of the positive degrees, and of the negative degrees. In filtration zero the first diagram has generators $c_4$, $c_6$, and $\Delta$. In the higher filtrations, the diagram is repetitive with period 72 generated by $\Delta^3$. In filtrations higher than 1, the second diagram is repetitive with period $-72$.

\begin{figure}[H]
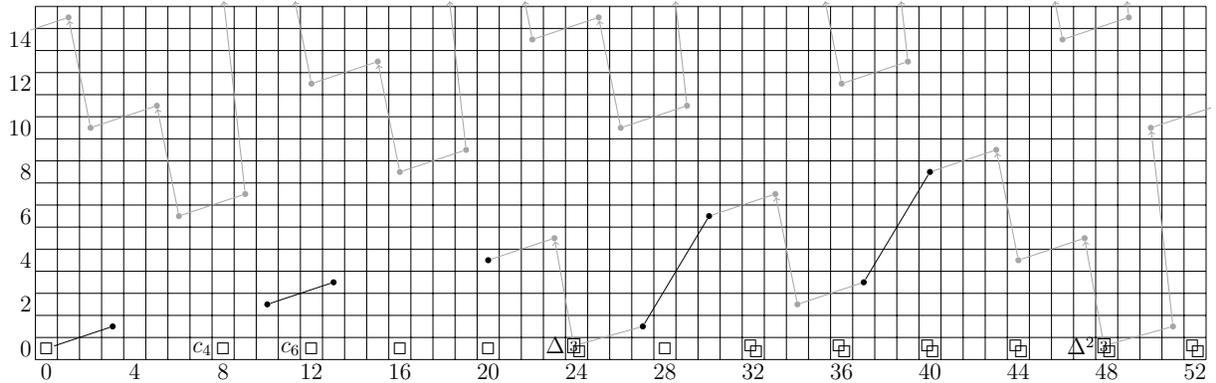

\res{
\begin{sseq}[xlabelstep=4]{0...52}{0...15}
\ssdrop{\square}
\ssmove 4 {0}
\movefour
\ssdroplabel{c_4\,}
\movefour
\ssdroplabel{c_6\,}
\movefour
\movefour
\ssmove 4 {0}
\ssdropboxed{\scriptstyle 3}
\ssname{del}
\ssdroplabel{\Delta}
\ssdrop{\square}
\movefour
\movefour
\ssdrop{\square}
\movefour
\ssdrop{\square}
\movefour
\ssdrop{\square}
\movefour
\ssdrop{\square}
\ssmove 4 {0}
\ssdropboxed{\scriptstyle 3}
\ssname{twodel}
\ssdroplabel{\Delta^2}
\ssdrop{\square}
\movefour
\ssdrop{\square}

\ssmoveto {-21} 1
\ssdrop{\square}
\ssmove {-4} 0
\minfour
\minfour
\minfour
\minfour
\ssmove {-4} 0
\ssdrop{\square}
\ssdropboxed{\resizebox{1.3mm}{!}{$\,\frac13\,$}}
\ssname{mindel}
\minfour
\minfour

\ssmoveto 0 0
\alphamult
\ssdropbull \alphamult
\ssdropbull \gray \alphamult \black
\ssdropbull \gray \alphamult \black
\ssdropbull \gray \alphamult
\ssdropbull \alphamult
\ssdropbull \alphamult

\ssgoto{del}
\gray \ssarrow {-1} 5
\ssgoto{del}
\ssline 3 1
\black \ssdropbull \ssline 3 5
\ssmove 3 1
\gray \ssline[arrowfrom] 1 {-5}
\ssdropbull
\ssline 3 1
\black \ssdropbull
\ssline 3 5
\ssmoveto {44} 4
\gray \ssdropbull
\ssarrow {-1} 5
\ssmove 1 {-5}
\alphamult

\ssgoto{twodel}
\gray \ssarrow {-1} 5
\ssgoto{twodel}
\ssline 3 1
\ssdropbull
\ssarrow {-1} 9

\ssmoveto {9} 7
\gray \ssdropbull
\ssline {-3} {-1}
\ssdropbull
\ssarrow {-1} 5
\ssdropbull
\ssline {-3} {-1}
\ssdropbull
\ssarrow {-1} 5
\ssdropbull
\ssline {-3} {-1}
\ssdropbull
\ssmoveto {9} 7
\threestep
\threestep
\threestep
\threestep
\threestep
\threestep

\ssmoveto {-52} 4
\ssdropbull
\ssmove 3 1 \ssdropbull \ssstroke
\ssmove 7 1
\ssdropbull \gray
\ssline 3 1
\ssdropbull
\ssmove 7 1 \black
\ssdropbull \gray
\ssline 3 1
\ssdropbull
\ssmove 7 1

\ssmoveto {-38} 2
\gray \ssdropbull \firstrightatthree
\gray \ssdropbull \ssarrow {-1} 5 \ssmove 1 {-5}
\ssline 3 1
\black \ssdropbull

\ssgoto{mindel}
\ssline 3 5

\ssmoveto {-50} {14}
\gray
\ssdropbull
\ssarrow {-1} 5
\ssdropbull
\ssmove 1 {-5}
\ssline 3 1
\ssdropbull
\ssline[arrowfrom] 1 {-5}
\ssdropbull
\ssline 3 1
\ssdropbull
\threestep
\threestep
\threestep
\threestep
\end{sseq}}
\caption{$\pi_*\Tmf$  localized at 3, the positive degrees.\label{fig:3pos}}
\end{figure}\vspace{-.7cm}

\begin{figure}[H]
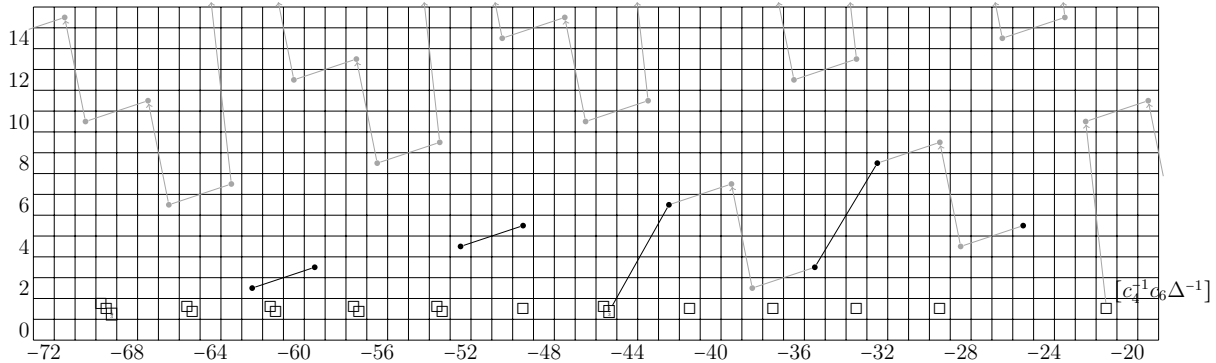

\res{
\begin{sseq}[xlabelstep=4]{-72...-19}{0...15}
\ssdrop{\square}
\ssmove 4 {0}
\movefour
\ssdroplabel{c_4\,}
\movefour
\ssdroplabel{c_6\,}
\movefour
\movefour
\ssmove 4 {0}
\ssdropboxed{3}
\ssname{del}
\ssdroplabel{\Delta}
\ssdrop{\square}
\movefour
\movefour
\ssdrop{\square}
\movefour
\ssdrop{\square}
\movefour
\ssdrop{\square}

\ssmoveto {-21} 1
\ssdrop{\square}\ssdroplabel[RU]{[c_4^{-1}c_6\Delta^{-1}]}
\ssmove {-4} 0
\minfour
\minfour
\minfour
\minfour
\ssmove {-4} 0
\ssdrop{\square}
\ssdropboxed{\resizebox{1.3mm}{!}{$\;\frac13\,$}}
\ssname{mindel}
\minfour
\minfour
\ssdrop{\square}
\minfour
\ssdrop{\square}
\minfour
\ssdrop{\square}
\minfour
\ssdrop{\square}
\ssmove {-4}{0}
\ssdrop{\square}
\ssname{mintwodel}
\ssdrop{\square}
\ssdropboxed{\resizebox{1.3mm}{!}{$\;\frac13\,$}}
\minfour
\ssdrop{\square}

\ssmoveto {-62} 2
\ssdropbull \alphamult
\ssdropbull \alphamult
\ssdropbull \gray \alphamult
\black \ssdropbull \gray \alphamult

\ssmoveto {-38} 2
\gray \ssdropbull \firstrightatthree
\gray \ssdropbull \ssarrow {-1} 5 \ssmove 1 {-5}
\ssline 3 1
\black \ssdropbull

\ssgoto{mindel}
\ssline 3 5

\ssmoveto {-21} 1
\gray \ssarrow {-1} 9
\ssdropbull \ssline 3 1
\ssdropbull \ssline[arrowfrom] 1 {-5}\ssdropbull

\ssmoveto {-63} 7
\gray\ssdropbull
\ssline {-3} {-1}
\ssdropbull
\ssarrow {-1} 5
\ssdropbull
\ssline {-3} {-1}
\ssdropbull
\ssarrow {-1} 5
\ssdropbull
\ssline {-3} {-1}
\ssdropbull
\ssmoveto {-63} 7
\threestep
\threestep
\threestep
\threestep
\threestep
\threestep
\threestep
\threestep
\threestep

\end{sseq}}
\caption{$\pi_*\Tmf$  localized at 3, the negative degrees.\label{fig:3neg}}
\end{figure}\vspace{-.7cm}

\section{$\pi_*\Tmf$ localized at the prime 2}\label{sec:2}

Now that we have done the computation localized at 3, we continue with the more elaborate case of the prime 2. The approach is the same, but the details are more involved, especially towards the end of the computation. Again we start with the picture of $H^*(\weier)$ taken from \cite{bau}.

\ret{41}{9}{
\ssdrop{\square}\ssline 1 1 \ssdropbull \ssdroplabel{h_1}\ssarrow 1 1\ssdrop{}\ssmove {-2} {-2}
\ssline 3 1 \ssdropbull \ssdroplabel[RD]{h_2}
\ssline 3 1 \ssdropbull
\ssline 3 1 \ssdropbull
\ssmove {-4} {-2} \ssdropbull \ssdroplabel{b} \ssline 1 1 \ssdropbull \ssarrow 1 1 \ssdrop{} \ssmove {-1} {-1}
\ssmove 2 {0} \ssdropbull \ssdroplabel{c}
\ssline 1 1 \ssmove {-6}{-2}
\ssdropextension \ssmove 5 1
\ssmove 7 1 \ssdropbull
\ssline {-1}{-1} \ssdropbull \ssdroplabel{d}
\ssline 3 1 \ssdropbull
\ssmove 3 1 \ssdropbull \cmult \ssmove{-8}{-2} \ssline 1 1 \ssdropbull \ssarrow 1 1\ssdrop{}\ssmove {-2} {-2}
\ssdropextension \ssdroplabel[RD]{g} \ssdropextension \ssname{g2}
\ssmove {-3} {-1} \ssstroke
\ssmove {11} 3
\ssmove 7 1 \ssdropbull
\ssline {-1}{-1} \ssdropbull
\ssline 3 1 \ssdropbull
\ssmove 3 1 \ssdropbull \ssline 3 1 \ssdropbull \ssmove {-3} {-1}
\ssvoidarrow 1 1
\ssdropextension \ssdropextension
\ssmove {-3} {-1} \ssstroke
\ssmoveto {0}{0} \ssmove 8 {0}
\ssdrop{\square}\ssdroplabel{c_4} \ssline 1 1 \ssdropbull \ssarrow 1 1\ssdrop{}\ssmove {-2} {-2}
\ssmove 4 {0}
\ssdrop{\square}\ssdroplabel{c_6} \ssline 1 1 \ssdropbull \ssarrow 1 1\ssdrop{}\ssmove {-2} {-2}
\ssmove 4 {0}
\ssdrop{\square}\ssline 1 1 \ssdropbull\ssarrow 1 1\ssdrop{}\ssmove {-2} {-2}
\ssmove 4 {0}
\ssdrop{\square}\ssline 1 1 \ssdropbull\ssarrow 1 1\ssdrop{}\ssmove {-2} {-2}
\ssmove 4 {0}
\ssdrop{\square}\ssdroplabel{\Delta}\ssname{del} \ssline 1 1 \ssdropbull\ssarrow 1 1\ssdrop{}\ssmove {-2} {-2}
\cmult\ssmove{-8}{-2}
\ssdrop{\square}\ssline 1 1 \ssdropbull\ssarrow 1 1\ssdrop{}\ssmove {-2} {-2}
\ssmove 4 {0}
\ssdrop{\square}\ssline 1 1 \ssdropbull\ssarrow 1 1 \ssdrop{}\ssmove {-2} {-2}
\ssmove 4 {0}
\ssdrop{\square}\ssline 1 1 \ssdropbull\ssarrow 1 1\ssdrop{}\ssmove {-2} {-2}
\ssdrop{\square}\ssline 1 1 \ssdropbull\ssarrow 1 1\ssdrop{}\ssmove {-2} {-2}
\ssmove 4 {0}
\ssdrop{\square}\ssline 1 1 \ssdropbull\ssarrow 1 1\ssdrop{}\ssmove {-2} {-2}
\ssdrop{\square}\ssline 1 1 \ssdropbull\ssarrow 1 1\ssdrop{}\ssmove {-2} {-2}
\ssmove 4 {0}
\ssdrop{\square}\ssvoidarrow 1 1
\ssdrop{\square}\ssvoidarrow 1 1
}{$H^*(\weier)$ localized at 2.\label{fig:p21}}

Recall that from now on a $\bullet$ denotes a copy of $\mbb Z/2$ and a $\square$ denotes a copy of $\mbb Z_{(2)}$. Furthermore we find some circled dots; one circle means a copy $\mbb Z/4$, two circles mean a copy $\mbb Z/8$. Lines of slope 1 denote a multiplication with $h_1$, while lines of slope $\frac13$ denote a multiplication with $h_2$.

There are several remarks that should be made about this picture. In $H^*(\weier)$ localized at $2$ we know that $t\Delta=0$ implies $t=0$ and that $tg=0$ implies $t=0$. Secondly observe that the result of multiplying $dh_2$ with $h_2$ is not $g$, represented by the bullet, but $4g$ represented by the outermost circle. The result of multiplication by $c_4$ should be made clear in a few cases. We have \begin{equation}\label{eq:c41} c_4h_2=c_4c=c_4d=0,\qquad c_4b=c_6h_1.\end{equation} Furthermore we find
\begin{equation}\label{eq:c42}c_4g=\Delta h_1^4,\qquad c_42g=2\Delta h_1^4=0.\end{equation} 

\subsection{The cohomology of $\mell$}

With the above remarks it is easy to compute $H^*(\weier)[c_4^{-1}]$.

\ret{-8...32}{3}{
\ssmoveto {-8} {0}
\ssdrop{\boxminus}\ssline 1 1\ssdrop{\resizebox{2mm}{!}{$\ominus$}}\ssvoidarrow 1 1\ssmove {-1}{-1}
\ssmove 4 {0}
\ssdrop{\boxminus}\ssdroplabel{c_4^{-2}c_6}\ssline 1 1\ssdrop{\resizebox{2mm}{!}{$\ominus$}}\ssvoidarrow 1 1\ssmove {-1}{-1}
\ssmove 4 {0}
\ssdrop{\boxminus}\ssline 1 1\ssdrop{\resizebox{2mm}{!}{$\ominus$}}\ssvoidarrow 1 1\ssmove {-1}{-1}
\ssmove 4 {0}
\ssdrop{\boxminus}\ssdroplabel{c_4^{-1}c_6}\ssline 1 1\ssdrop{\resizebox{2mm}{!}{$\ominus$}}\ssvoidarrow 1 1\ssmove {-1}{-1}
\ssmove 4 {0}
\ssdrop{\boxminus}\ssdroplabel{c_4}\ssline 1 1\ssdrop{\resizebox{2mm}{!}{$\ominus$}}\ssvoidarrow 1 1\ssmove {-1}{-1}
\ssmove 4 {0}
\ssdrop{\boxminus}\ssdroplabel{c_6}\ssline 1 1\ssdrop{\resizebox{2mm}{!}{$\ominus$}}\ssvoidarrow 1 1\ssmove {-1}{-1}
\ssmove 4 {0}
\ssdrop{\boxminus}\ssline 1 1\ssdrop{\resizebox{2mm}{!}{$\ominus$}}\ssvoidarrow 1 1\ssmove {-1}{-1}
\ssmove 4 {0}
\ssdrop{\boxminus}\ssline 1 1\ssdrop{\resizebox{2mm}{!}{$\ominus$}}\ssvoidarrow 1 1\ssmove {-1}{-1}
\ssmove 4 {0}
\ssdrop{\boxminus}\ssline 1 1\ssdrop{\resizebox{2mm}{!}{$\ominus$}}\ssvoidarrow 1 1\ssmove {-1}{-1}
\ssmove 4 {0}
\ssdrop{\boxminus}\ssline 1 1\ssdrop{\resizebox{2mm}{!}{$\ominus$}}\ssvoidarrow 1 1\ssmove {-1}{-1}
\ssmove 4 {0}
\ssdrop{\boxminus}\ssline 1 1\ssdrop{\resizebox{2mm}{!}{$\ominus$}}\ssvoidarrow 1 1\ssmove {-1}{-1}
}{$H^*(\weier)[c_4^{-1}]$ localized at 2.}

Here every $\boxminus$ represents a copy of $\mbb Z_{(2)}[c_4^{-3}\Delta]$ and $\ominus$ a copy of $\mbb Z/2[c_4^{-3}\Delta]$. Note that $b=c_4^{-1}c_6h_1$, so $b$ is presented in this picture whilst it is not explicitly drawn. The same is true for $g$

In $H^*(\weier)$ the equation $t\Delta=0$ implies $t=0$, so the map $i_\Delta:H^*(\weier)\longrightarrow H^*(\weier)[\Delta^{-1}]$ is an inclusion.

\ret{-8...32}{12}{
\ssdrop{\boxvert}
\ssline 3 1 \ssdropbull
\ssline 3 1 \ssdropbull
\ssline 3 1 \ssdropbull
\ssmove {-4} {-2} \ssdropbull \ssdroplabel[L]{\scriptstyle b}\ssline 1 1 \ssdropbull \ssarrow 1 1\ssdrop{}\ssmove {-1} {-1}
\ssmove 2 {0} \ssdropbull
\ssline 1 1 \ssmove {-6}{-2}
\ssdropextension \ssmove 5 1
\ccmult
\ssmoveto {-16} 2 \ccmult \ccmult
\ssmove 7 1 \ssdropbull
\ssline {-1}{-1} \ssdropbull
\ssline 3 1 \ssdropbull
\ssmoveto {-20} 6 \ccmult
\ssmove 7 1 \ssdropbull
\ssline {-1}{-1} \ssdropbull
\ssline 3 1 \ssdropbull
\ssline 3 1 \ssdropbull
\ssmoveto {24} {0} \ssdrop{\boxvert}\ssdroplabel{\Delta}\cmult
\ssmoveto {-8} {0}
\ssdrop{\boxvert}\ssline 1 1\ssdrop{\resizebox{2mm}{!}{$\overt$}}\ssarrow 1 1\ssdrop{}\ssmove {-2}{-2}
\ssmove 4 {0}
\ssdrop{\boxvert}\ssline 1 1\ssdrop{\resizebox{2mm}{!}{$\overt$}}\ssarrow 1 1\ssdrop{}\ssmove {-2}{-2}
\ssmove 4 {0}
\ssline 1 1\ssdrop{\resizebox{2mm}{!}{$\overt$}}\ssarrow 1 1\ssdrop{}\ssmove {-2}{-2}
\ssmove 4 {0}
\ssdrop{\boxvert}\ssdroplabel[R]{\scriptstyle c_4^2c_6\Delta^{-1}} \ssline 1 1\ssdrop{\resizebox{2mm}{!}{$\overt$}}\ssarrow 1 1\ssdrop{}\ssmove {-2}{-2}
\ssmove 4 {0}
\ssdrop{\boxvert}\ssline 1 1\ssdrop{\resizebox{2mm}{!}{$\overt$}}\ssarrow 1 1\ssdrop{}\ssmove {-2}{-2}
\ssmove 4 {0}
\ssdrop{\boxvert}\ssline 1 1\ssdrop{\resizebox{2mm}{!}{$\overt$}}\ssarrow 1 1\ssdrop{}\ssmove {-2}{-2}
\ssmove 4 {0}
\ssdrop{\boxvert}\ssline 1 1\ssdrop{\resizebox{2mm}{!}{$\overt$}}\ssarrow 1 1\ssdrop{}\ssmove {-2}{-2}
\ssmove 4 {0}
\ssdrop{\boxvert}\ssline 1 1\ssdrop{\resizebox{2mm}{!}{$\overt$}}\ssarrow 1 1\ssdrop{}\ssmove {-2}{-2}
\ssmove 4 {0}
\ssline 1 1\ssdrop{\resizebox{2mm}{!}{$\overt$}}\ssarrow 1 1\ssdrop{}\ssmove {-2}{-2}
\ssmove 4 {0}
\ssdrop{\boxvert}\ssline 1 1\ssdrop{\resizebox{2mm}{!}{$\overt$}}\ssarrow 1 1\ssdrop{}\ssmove {-2}{-2}
\ssmove 4 {0}
\ssdrop{\boxvert}\ssline 1 1\ssdrop{\resizebox{2mm}{!}{$\overt$}}\ssarrow 1 1\ssdrop{}\ssmove {-2}{-2}
}{$H^*(\weier)[\Delta^{-1}]$ localized at 2.}

Now we have $\boxvert$ for a copy of $\mbb Z_{(2)}[c_4^3\Delta^{-1}]$ and $\overt$ for a copy of $\mbb Z/2[c_4^3\Delta^{-1}]$. This time $$b\not\in h_1c_4^2c_6\Delta^{-1}\cdot\mbb Z/2[c_4^3\Delta^{-1}],$$
so we draw $b$ separately. The same argument holds for $g$.

For $H^*(\weier)[c_4^{-1},\Delta^{-1}]$ we get an easy description again via $H^*(\weier)[c_4^{-1}]$.

\ret{-8...32}{3}{
\ssmoveto {-8} {0}
\ssdrop{\boxplus}\ssline 1 1\ssdrop{\resizebox{2mm}{!}{$\oplus$}}\ssvoidarrow 1 1\ssmove {-1}{-1}
\ssmove 4 {0}
\ssdrop{\boxplus}\ssline 1 1\ssdrop{\resizebox{2mm}{!}{$\oplus$}}\ssvoidarrow 1 1\ssmove {-1}{-1}
\ssmove 4 {0}
\ssdrop{\boxplus}\ssline 1 1\ssdrop{\resizebox{2mm}{!}{$\oplus$}}\ssvoidarrow 1 1\ssmove {-1}{-1}
\ssmove 4 {0}
\ssdrop{\boxplus}\ssline 1 1\ssdrop{\resizebox{2mm}{!}{$\oplus$}}\ssvoidarrow 1 1\ssmove {-1}{-1}
\ssmove 4 {0}
\ssdrop{\boxplus}\ssline 1 1\ssdrop{\resizebox{2mm}{!}{$\oplus$}}\ssvoidarrow 1 1\ssmove {-1}{-1}
\ssmove 4 {0}
\ssdrop{\boxplus}\ssline 1 1\ssdrop{\resizebox{2mm}{!}{$\oplus$}}\ssvoidarrow 1 1\ssmove {-1}{-1}
\ssmove 4 {0}
\ssdrop{\boxplus}\ssline 1 1\ssdrop{\resizebox{2mm}{!}{$\oplus$}}\ssvoidarrow 1 1\ssmove {-1}{-1}
\ssmove 4 {0}
\ssdrop{\boxplus}\ssline 1 1\ssdrop{\resizebox{2mm}{!}{$\oplus$}}\ssvoidarrow 1 1\ssmove {-1}{-1}
\ssmove 4 {0}
\ssdrop{\boxplus}\ssline 1 1\ssdrop{\resizebox{2mm}{!}{$\oplus$}}\ssvoidarrow 1 1\ssmove {-1}{-1}
\ssmove 4 {0}
\ssdrop{\boxplus}\ssline 1 1\ssdrop{\resizebox{2mm}{!}{$\oplus$}}\ssvoidarrow 1 1\ssmove {-1}{-1}
\ssmove 4 {0}
\ssdrop{\boxplus}\ssline 1 1\ssdrop{\resizebox{2mm}{!}{$\oplus$}}\ssvoidarrow 1 1\ssmove {-1}{-1}
}{$H^*(\weier)[c_4^{-1},\Delta^{-1}]$ localized at 2.}

Every $\boxplus$ denotes a copy of $\mbb Z_{(2)}[c_4^{-3}\Delta, c_4^3\Delta^{-1}]$ and every $\oplus$ denotes a copy of $\mbb Z/2[c_4^{-3}\Delta, c_4^3\Delta^{-1}]$. There is also a map $i_{c_4}$ from $H^*(\weier)[\Delta^{-1}]$ to $H^*(\weier)[c_4^{-1},\Delta^{-1}]$, which we will need for the computation of $H^*(\mell)$. The $c_4$-multiplications given in the equations (\ref{eq:c41}) and (\ref{eq:c42}) imply that the map $i_{c_4}$ sends $h_2$, $c$, $d$, $2g$ to 0, sends $b$ to $h_1c_4^{-1}c_6$, and sends $g$ to $h_1^4c_4^{-1}\Delta$.

Now we are ready to compute $H^*(\mell)$ using the Mayer-Vietoris long exact sequence (\ref{eq:vie}) \begin{equation}\label{eq:vie2}\cdots\stackrel{D=}{\stackrel{(i_\Delta,-i_{c_4})}{\longrightarrow}}H^{i-1}(\weier)[c_4^{-1},\Delta^{-1}]\stackrel{d}{\longrightarrow}H^i(\mell)\stackrel{\resizebox{6.5mm}{!}{$\begin{pmatrix} i_{c_4}\\ i_{\Delta}\end{pmatrix}$}}{\longrightarrow}H^i(\weier)[c_4^{-1}]\oplus H^i(\weier)[\Delta^{-1}]\stackrel{D=}{\stackrel{(i_\Delta,-i_{c_4})}{\longrightarrow}}\cdots.\end{equation} First we determine for every element of $H^*(\weier)[c_4^{-1}]\oplus H^*(\weier)[\Delta^{-1}]$ whether or not it is in the kernel of $D$. After that we turn to analyzing the cokernel of $D$, which is the quotient of $H^*(\weier)[c_4^{-1},\Delta^{-1}]$ by the image of $D$. 

\begin{lemma}\label{lem:ker}
There is an isomorphism between the kernel of $D$ and the subset of $H^*(\weier)[\Delta^{-1}]$ generated by the products containing $h_2$, $c$, $d$ or $2g$ and terms $b^hg^ih_1^jc_4^kc_6^l\Delta^m$ with $i+m\geq 0$.
\end{lemma}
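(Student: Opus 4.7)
The plan is to exploit the fact that $i_\Delta$ is injective to identify $\ker D$ with its image in the second factor $H^*(\weier)[\Delta^{-1}]$, and then to translate the condition for an element $w$ to lie in this image into an explicit statement about its monomial decomposition.

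First I would observe that $\Delta$ is not a zero divisor in $H^*(\weier)$, hence not in $H^*(\weier)[c_4^{-1}]$ either, so the localization map $i_\Delta\colon H^*(\weier)[c_4^{-1}]\to H^*(\weier)[c_4^{-1},\Delta^{-1}]$ is injective. Consequently a pair $(v,w)\in\ker D$ is determined by $w$ alone (since $v$ is then the unique preimage under $i_\Delta$ of $i_{c_4}(w)$), and
\[
\ker D\ \cong\ \bigl\{\,w\in H^*(\weier)[\Delta^{-1}]\ :\ i_{c_4}(w)\in\operatorname{im}(i_\Delta)\,\bigr\}.
\]
Under this identification $\operatorname{im}(i_\Delta)\subset H^*(\weier)[c_4^{-1},\Delta^{-1}]$ is exactly the subset of classes whose $\Delta$-exponent is non-negative, since $H^*(\weier)[c_4^{-1},\Delta^{-1}]$ is obtained from $H^*(\weier)[c_4^{-1}]$ by inverting $\Delta$.

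Next I would use the explicit description of $i_{c_4}$ computed right before the lemma, namely
\[
i_{c_4}(h_2)=i_{c_4}(c)=i_{c_4}(d)=i_{c_4}(2g)=0,\qquad i_{c_4}(b)=h_1c_4^{-1}c_6,\qquad i_{c_4}(g)=h_1^4c_4^{-1}\Delta,
\]
while $i_{c_4}$ acts as the obvious inclusion on $h_1,c_4,c_6,\Delta$. Every homogeneous element of $H^*(\weier)[\Delta^{-1}]$ can be written as a sum of monomials, each either containing one of the annihilated factors $h_2,c,d,2g$ or of the pure form $b^hg^ih_1^jc_4^kc_6^l\Delta^m$ with $h,i,j,k,l\ge0$ and $m\in\mathbb Z$. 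Monomials of the first type automatically lie in $\ker D$ because $i_{c_4}$ sends them to $0$, which certainly lies in $\operatorname{im}(i_\Delta)$. For a monomial of the second type one computes
\[
i_{c_4}\bigl(b^hg^ih_1^jc_4^kc_6^l\Delta^m\bigr)=h_1^{h+4i+j}\,c_4^{k-h-i}\,c_6^{l+h}\,\Delta^{m+i},
\]
whose $\Delta$-exponent is $m+i$. This lies in $\operatorname{im}(i_\Delta)$ if and only if $m+i\ge0$, which is precisely the stated condition.

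Finally I would check that these two families together span $\ker D$ and are stable under the relations in $H^*(\weier)[\Delta^{-1}]$ (for instance $c_4b=c_6h_1$ and $dh_2=4g$ from the remarks preceding the lemma), so that no additional constraints appear. The main obstacle I anticipate is simply bookkeeping: one must be careful that when rewriting elements using these multiplicative relations the indices $h,i,j,k,l,m$ remain well defined, and in particular that the condition $i+m\ge0$ is preserved; but once the monomial basis is fixed the argument reduces to the one-line computation above.
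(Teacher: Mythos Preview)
Your proof is correct and follows essentially the same approach as the paper: you use injectivity of $i_\Delta$ to identify $\ker D$ with the set of $w\in H^*(\weier)[\Delta^{-1}]$ for which $i_{c_4}(w)$ lies in $H^*(\weier)[c_4^{-1}]$, then dispose of monomials containing $h_2,c,d,2g$ via $i_{c_4}=0$, and finally compute $i_{c_4}(b^hg^ih_1^jc_4^kc_6^l\Delta^m)$ and read off the $\Delta$-exponent $i+m$. The closing paragraph about stability under relations is a reasonable remark but not needed for the argument, and the paper omits it.
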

\begin{proof}
Because the map $i_\Delta:H^*(\weier)[c_4^{-1}]\longrightarrow H^*(\weier)[c_4^{-1},\Delta^{-1}]$ is an inclusion, an element is in the kernel of $D$ precisely when it is of the form $(u,v)\in H^*(\weier)[c_4^{-1}]\oplus H^*(\weier)[\Delta^{-1}]$ with $u=i_{c_4}(v)$. This means that any element of the kernel of $D$ is of the form $(i_{c_4}(v),v)$ with $v\in H^*(\weier)[\Delta^{-1}]$ and this element of the kernel can then be identified with $v$. Hence we can view $\ker D$ as a subset of $H^*(\weier)[\Delta^{-1}]$; the subset of elements $v\in H^*(\weier)[\Delta^{-1}]$ such that $i_{c_4}(v)\in H^*(\weier)[c_4^{-1}]$. Every product in $H^*(\weier)[\Delta^{-1}]$ containing $h_2$, $c$, $d$ or $2g$ is in the kernel of $D$ because $i_{c_4}$ maps these elements to 0, which is surely an element of $H^*(\weier)[c_4^{-1}]$. What is left is to decide whether products of $b$, $g$, $h_1$, $c_4$, $c_6$ and $\Delta$ are in the kernel. Given an element of the form $b^hg^ih_1^jc_4^kc_6^l\Delta^m\in H^*(\weier)[\Delta^{-1}]$ we compute
$$i_{c_4}(b^hg^ih_1^jc_4^kc_6^l\Delta^m) = h_1^{h+4i+j}c_4^{-h-i+k}c_6^{h+l}\Delta^{i+m}.$$
This belongs to $H^*(\weier)[c_4^{-1}]$ precisely when $i+m\geq 0$. This fully determines the kernel of $D$.
\end{proof}

\begin{lemma}
The cokernel of $D$ is generated by the classes that can be represented by an element $h_1^wc_4^xc_6^y\Delta^z$ of $H^*(\weier)[c_4^{-1},\Delta^{-1}]$ whose exponents satisfy the inequalities $z<0$ and $w+4x+3y<0$.
\end{lemma}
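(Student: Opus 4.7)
My plan is to work explicitly with the additive basis of $H^*(\weier)[c_4^{-1},\Delta^{-1}]$ and to determine for each basis monomial whether it lies in $\im D = \im(i_\Delta) + \im(i_{c_4})$. After inverting $c_4$ the relations (\ref{eq:c41}) and (\ref{eq:c42}) force $h_2=c=d=2g=0$, while $b=h_1c_4^{-1}c_6$ and $g=h_1^4c_4^{-1}\Delta$; combined with $c_6^2=c_4^3-1728\Delta$ and the vanishing of $1728h_1^w$ in higher filtration, this shows that $H^*(\weier)[c_4^{-1},\Delta^{-1}]$ is free on the monomials $h_1^wc_4^xc_6^y\Delta^z$ with $w\geq 0$, $x,z\in\mbb Z$, $y\in\{0,1\}$, with coefficients in $\mbb Z_{(2)}$ for $w=0$ and in $\mbb Z/2$ for $w\geq 1$. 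Since these monomials form a basis, a monomial represents a nonzero class in $\coker D$ precisely when it lies in neither $\im(i_\Delta)$ nor $\im(i_{c_4})$.

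As $\Delta$ is a non-zero-divisor, $i_\Delta$ is injective and $\im(i_\Delta)$ is spanned by the monomials with $z\geq 0$. For $\im(i_{c_4})$ I would re-use the formula from the proof of Lemma~\ref{lem:ker}: the general product $b^hg^ih_1^jc_4^kc_6^l\Delta^m$ with $h,i,j,k,l\geq 0$ and $m\in\mbb Z$ is sent to $h_1^{h+4i+j}c_4^{-h-i+k}c_6^{h+l}\Delta^{i+m}$. Reducing $c_6^{h+l}$ modulo $c_6^2=c_4^3$ (valid in higher filtration because $1728h_1^w=0$) and checking the filtration-zero case separately, one sees that $h_1^wc_4^xc_6^y\Delta^z$ lies in $\im(i_{c_4})$ if and only if there exist integers $0\leq h\leq\min(w,y)$ and $i\geq 0$ with $h+4i\leq w$ and $h+i+x\geq 0$; in the degenerate case $w=0$ this reduces to $x\geq 0$.

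A short case split on $y\in\{0,1\}$ then identifies this condition with $w+4x+3y\geq 0$. For $y=0$ the largest feasible value of $h+i$ is $\lfloor w/4\rfloor$ (take $h=0$), and since $x$ is an integer, $\lfloor w/4\rfloor\geq -x$ is equivalent to $w+4x\geq 0$. For $y=1$ the maximum is $1+\lfloor(w-1)/4\rfloor$ (take $h=1$), and the same integrality argument gives equivalence with $w+4x+3\geq 0$. Combined with the condition $z\geq 0$ for $\im(i_\Delta)$, a monomial $h_1^wc_4^xc_6^y\Delta^z$ survives in $\coker D$ exactly when $z<0$ and $w+4x+3y<0$, which is the claim. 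The main obstacle is this final case analysis, together with handling the $c_6^2=c_4^3-1728\Delta$ relation correctly (it degenerates to $c_6^2=c_4^3$ in higher filtration but not in filtration zero); once these bookkeeping issues are set up, the arithmetic is routine.
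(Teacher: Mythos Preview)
Your argument is correct and follows essentially the same route as the paper: identify the monomial basis of $H^*(\weier)[c_4^{-1},\Delta^{-1}]$, observe that both $\im(i_\Delta)$ and $\im(i_{c_4})$ are spanned by subsets of these basis monomials (so a monomial lies in $\im D$ iff it lies in one of the two), handle $i_\Delta$ by the condition $z\geq 0$, and analyse $i_{c_4}$ via the formula $i_{c_4}(b^hg^ih_1^jc_4^kc_6^l\Delta^m)=h_1^{h+4i+j}c_4^{-h-i+k}c_6^{h+l}\Delta^{i+m}$.

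The only real difference is packaging. The paper writes down a single explicit choice of parameters $(i,h,j,k,l,m)$ valid for all $(w,x,y,z)$ with $w+4x+3y\geq 0$ and then checks the constraints $i,j,k\geq 0$ and $h,l,h+l\in\{0,1\}$ directly; you instead reduce the existence problem to maximising $h+i$ subject to $h\leq\min(w,y)$ and $h+4i\leq w$, and then do a short case split on $y$. Both lead to the same floor-function arithmetic. One small clean-up: your aside about ``reducing $c_6^{h+l}$ modulo $c_6^2=c_4^3$'' is unnecessary, since the basis of $H^*(\weier)[\Delta^{-1}]$ already has $h,l,h+l\in\{0,1\}$, so $c_6^{h+l}$ never needs reducing; dropping that remark would streamline the argument.
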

\begin{proof}
The group $H^*(\weier)[c_4^{-1},\Delta^{-1}]$ is spanned by the elements $h_1^wc_4^xc_6^y\Delta^z$ with $w\geq0$, $x\in\mbb Z$, $y\in\{0,1\}$ and $z\in\mbb Z$. Such a basis element is in the image of $D$ precisely when it is in the image of $i_{c_4}$ or in the image of $i_\Delta$. 

We observe that $h_1^wc_4^xc_6^y\Delta^z$ is in the image of $i_\Delta$ if and only if it is an element of $H^*(\weier)[c_4^{-1}]$, i.e. precisely when $z\geq0$. 

On the other hand, $h_1^wc_4^xc_6^y\Delta^z$ is in the image of $i_{c_4}$ if and only if it can be written as $h_1^{h+4i+j}c_4^{-h-i+k}c_6^{h+l}\Delta^{i+m}$, with $i,j,k\geq0$ and $h,l,h+l\in\{0,1\}$. So if $h_1^wc_4^xc_6^y\Delta^z$ is in the image of $i_{c_4}$ then we find that $$w+4x+3y=(h+4i+j)+4(-h-i+k)+3(h+l)=j+4k+3l\geq0.$$ Conversely, if we start with $w$, $x$, $y$, $z$ such that $w+4x+3y\geq0$, then we choose
\begin{equation}\label{eq:ihjklm}
\begin{aligned}
i&=\lfloor\frac{w}{4}\rfloor, \\
h&=\begin{cases} 1 \qquad\textrm{ if } w-4i>0 \textrm{ and } y=1, \\ 0 \qquad\textrm{ else, }\end{cases}\\
j&=w-4i-h, \\
k&=x+h+i,\\
l&=y-h,\\
m&=z-i.\\
\end{aligned}
\end{equation}
These coefficients happen to satisfy the property $h_1^{h+4i+j}c_4^{-h-i+k}c_6^{h+l}\Delta^{i+m}=h_1^wc_4^xc_6^y\Delta^z$. Now we claim that \begin{equation}\label{eq:claim}\begin{aligned} i,j,k&\geq0,\\ h,l,h+l&\in\{0,1\},\end{aligned}\end{equation} which would imply that $b^hg^ih_1^jc_4^kc_6^l\Delta^m$ is an element of $H^*(\weier)[\Delta^{-1}]$ and thence places $h_1^wc_4^xc_6^y\Delta^z$ in the image of $i_{c_4}$. 

Let us check the claims in (\ref{eq:claim}) using the definitions in (\ref{eq:ihjklm}). It is clear that $i\geq0$, because $w\geq0$. That $h\in\{0,1\}$ is obvious. By definition of $i$ we know that $w-4i\geq0$, so if $h=0$ then $j\geq0$. If $h>0$, then we find that $w-4i>0$ and $h=1$ and in this case we get $j=w-4i-h\geq0$ as well. Now we turn to estimating $k$. First of all the given estimation $w+4x+3y\geq0$ implies that $$k=x+h+i=\frac14(4x+4h+4i)\geq\frac14(-w-3y+4h+4i).$$ If $w-4i=0$, then $h=0$ and $-w-3y+4h+4i=-3y\geq-3$. If $3\geq w-4i>0$, then $y=h$ and $-w-3y+4h+4i\geq -w+4i\geq-3$. Hence $$\frac14(-w-3y+4h+4i)\geq \frac{-3}{4}>-1,$$ so $k>-1$ and because $k$ is an integer we have $k\geq0$. Because $y\in\{0,1\}$ we have $h,l,h+l\in\{0,1\}$. We have checked all the statements of (\ref{eq:claim}), thus we have derived that $b^hg^ih_1^jc_4^kc_6^l\Delta^m\in H^*(\weier)[\Delta^{-1}]$ and $i_{c_4}(b^hg^ih_1^jc_4^kc_6^l\Delta^m)=h_1^wc_4^xc_6^y\Delta^z$ is in the image of $i_{c_4}$. We conclude that $h_1^wc_4^xc_6^y\Delta^z$ is in the image of $i_{c_4}$ if and only if $w+4x+3y\geq0$. 

Thus the image of $D$ is generated by elements $h_1^wc_4^xc_6^y\Delta^z$ with either $z\geq0$ or $w+4x+3y\geq0$. Subsequently the generators of the cokernel of $D$ correspond to elements with $z<0$ and $w+4x+3y<0$.
\end{proof}

Now we can draw $H^*(\mell)$ localized at 2 almost completely. The only obstacles remaining are those bidegrees where both the kernel and the cokernel are non-empty. When we draw the picture we observe this is the case in the following bidegrees:
\begin{equation}\label{eq:cases2}
\begin{aligned}
H^{1,-24k+4}(\mell)&=\begin{cases}\mbb (Z_{(2)})^k&\:\text{or}\\ \mbb (Z_{(2)})^k\times \mbb Z/2&\:\text{or}\\ \mbb (Z_{(2)})^k\times \mbb Z/4&,\end{cases}\\
H^{4j+5,-24k+4}(\mell)&=\begin{cases}(\mbb Z/2)^{k-1}\times\mbb Z/8&\:\text{or}\\ (\mbb Z/2)^{k}\times\mbb Z/4&,\end{cases}\\
H^{4j+3,-24k-4}(\mell)&=\begin{cases}(\mbb Z/2)^{k-1}\times\mbb Z/4&\:\text{or}\\ (\mbb Z/2)^{k+1}&,\end{cases}\\
H^{4j+3,-24k-12}(\mell)&=\begin{cases}(\mbb Z/2)^{k-1}\times\mbb Z/4&\:\text{or}\\ (\mbb Z/2)^{k+1}&,\end{cases}\\
H^{4j+2,-24k-14}(\mell)&=\begin{cases}(\mbb Z/2)^{k-1}\times\mbb Z/4&\:\text{or}\\ (\mbb Z/2)^{k+1}&.\end{cases}
\end{aligned}
\end{equation}
In Figure \ref{fig:mell2q} we have labelled these bidegrees with a question mark, but we have also drawn the answers to these questions, which will be explained afterwards.

\clearpage
\begin{figure}
\begin{minipage}{.85\linewidth}
\includepdf[pagecommand={}]{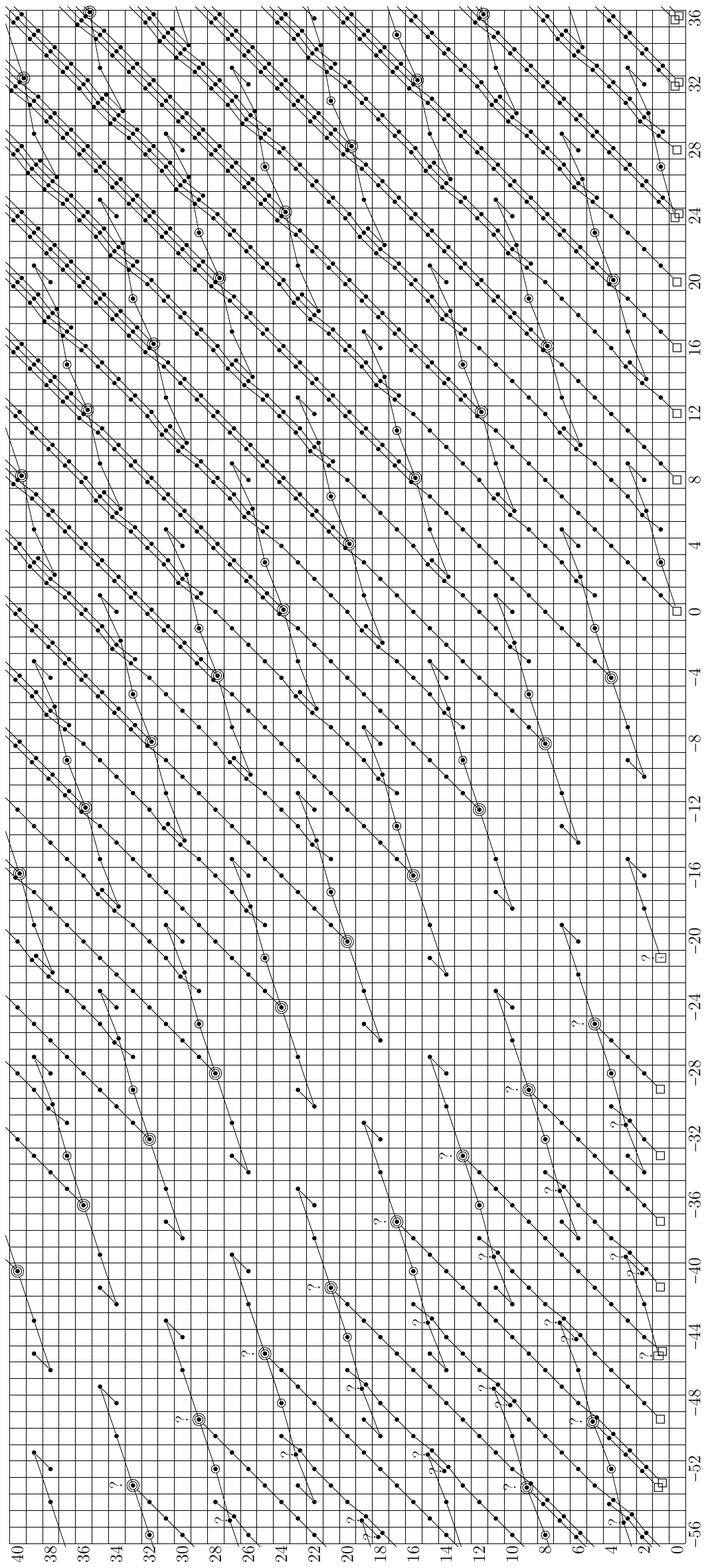}
\end{minipage}\hfill
\begin{minipage}{.15\linewidth}
\rotcaption{$H^*(\mell)$ localized at 2 with the bidegrees still to be determined marked with a question mark. \label{fig:mell2q}}
\end{minipage}
\end{figure}
\clearpage

\subsection{The mod 2 cohomology of $\mell$}

To resolve the problems listed in (\ref{eq:cases2}) we resort to the Universal Coefficient Theorem. Applying this theorem to Figure \ref{fig:p21} we get Figure \ref{fig:wmod2}. The Universal Coefficient Theorem gives all the elements of the bigraded algebra, but it does not say anything specific about the algebraic structure. Some extensions, for example, are not carried over by the functor $\Tor(-,\mbb Z/2)$. Take the surjection $\mbb Z/p^k\twoheadrightarrow\mbb Z/p^l$ with $k>l$. This map induces a commutative diagram of two short exact sequences used to define $\Tor$ 
$$\xymatrix{0 \ar[r] & \mbb Z/2 \ar[r]^{\cdot 2^k} \ar[d]^{\cdot 0} & \mbb Z/2^{k+1} \ar[r]^{\cdot 1}\ar[d]^{\cdot 1} & \mbb Z/2^k \ar[r]\ar[d]^{\cdot 1} & 0\\
0 \ar[r] & \mbb Z/2 \ar[r]^{\cdot 2^l} & \mbb Z/2^{l+1} \ar[r]^{\cdot 1} & \mbb Z/2^l \ar[r] & 0}$$
Notice that the first vertical map is the zero map, hence the induced map between $\Tor(\mbb Z/2^k,\mbb Z/2)$ and $\Tor(\mbb Z/2^l,\mbb Z/2)$ must also be zero. Another multiplication that is absent when we work with $\mbb Z/2$-coefficients is $dh_2^2=4g$ because $4g=0 \mod 2$. We will give all these absent extensions a grey color in the upcoming picture.

On the other hand, there are also some multiplications that are not apparent from the conversion to $\mbb Z/2$-coefficients. We are not going to explain these, but they will be drawn with dashed lines. The resulting picture can also be found in \cite{bau}.

\ret{41}{9}{
\ssdropbull \ssline 3 1
\ssdropbull \ssline 3 1
\ssdropbull \ssline 3 1
\ssdropbull \ssline {-1} {-1}
\ssdropbull
\ssmoveto 4 {0}
\ssdropbull \gray \ssline 3 1 \black
\ssdropbull \ssline 3 1
\ssdropbull \ssline {-1} {-1}
\ssdropbull
\ssmoveto 4 {0} \ssline[dashed] 1 1
\ssdropbull \ssline 1 1
\ssdropbull \ssarrow 1 1 \ssdrop{}
\ssmoveto 7 1\ssline[dashed] 1 1
\ssmove 7 1 \ssdropbull
\ssline {-1}{-1} \ssdropbull \ssdroplabel{d}
\ssline 3 1 \ssdropbull
\ssmove 3 1 \ssdropbull \ssdroplabel{g}
\ssline 3 1 \ssdropbull
\ssline 3 1 \ssdropbull
\ssline 3 1 \ssdropbull
\ssmove {-4} {-2} \ssdropbull \ssline 1 1 \ssdropbull \ssarrow 1 1\ssdrop{}\ssmove {-1} {-1}
\ssmove 2 {0} \ssdropbull
\ssline 1 1 \ssmove {-6}{-2} \ssmove 5 1
\ssmove{-8}{-2} \ssline 1 1 \ssdropbull \ssarrow 1 1\ssdrop{}\ssmove {-2} {-2}
\gray \ssline {-3} {-1} \black
\ssmove {-2} {-2} \ssdropbull
\ssline 1 1 \ssdropbull
\ssline[dashed] 1 1 \ssmove {-2} {-2}
\ssline 3 1 \ssdropbull
\ssline 3 1 \ssdropbull
\gray \ssline 1 1 \black \ssdropbull \ssarrow 1 1 \ssdrop{}\ssmove {-2} {-2}
\gray \ssline 3 1 \black \ssdropbull
\ssline[dashed] 1 1 \ssmove {-1} {-1}
\gray \ssline 3 1 \black \ssdropbull
\ssline[dashed] 1 1 \ssmove {-1} {-1}
\ssline 3 1 \ssdropbull
\ssline {-1} {-1} \ssdropbull
\ssmove {-3} {-1} \ssdropbull \ssline 1 1 \ssdropbull \ssarrow 1 1 \ssdrop{}
\ssmove 7 1 \ssdropbull
\ssline {-1}{-1} \ssdropbull
\ssline 3 1 \ssdropbull
\ssmove 3 1 \ssdropbull
\ssline 3 1 \ssdropbull
\ssline 3 1 \ssdropbull
\ssline 3 1 \ssdropbull
\ssmove {-4} {-2} \ssdropbull \ssline 1 1 \ssdropbull \ssline 1 1\ssdrop{}\ssmove {-1} {-1}
\ssmove 2 {0} \ssdropbull
\ssline 1 1 \ssmove {-6}{-2} \ssmove 5 1
\ssmove{-8}{-2} \ssarrow 1 1\ssdrop{}\ssmove {-1} {-1}
\gray \ssline {-3} {-1} \black
\ssmove {-2} {-2} \ssdropbull
\ssline 1 1 \ssdropbull
\ssline[dashed] 1 1 \ssmove {-2} {-2}
\ssline 3 1 \ssdropbull
\ssline 3 1 \ssdropbull
\gray \ssline 1 1 \black \ssdropbull \ssline 1 1 \ssdrop{}\ssmove {-2} {-2}
\gray \ssline 3 1 \black \ssdropbull
\ssline[dashed] 1 1 \ssmove {-1} {-1}
\gray \ssline 3 1 \black \ssdropbull
\ssline[dashed] 1 1 \ssmove {-1} {-1}
\ssline 3 1 \ssdropbull
\ssline {-1} {-1} \ssdropbull
\ssmove {-3} {-1} \ssdropbull \ssline 1 1 \ssdropbull \ssline 1 1 \ssdrop{}
\ssmoveto {24} {0}
\ssdropbull \ssdroplabel{\Delta} \ssline 3 1
\ssdropbull \ssline 3 1
\ssdropbull \ssline 3 1
\ssdropbull \ssline {-1} {-1}
\ssdropbull
\ssmove {-4} {-2}
\ssdropbull \gray \ssline 3 1 \black
\ssdropbull \ssline 3 1
\ssdropbull \ssline {-1} {-1}
\ssdropbull
\ssmove {-5} {-1} \ssline[dashed] 1 1
\ssdropbull \ssline 1 1
\ssdropbull \ssarrow 1 1 \ssdrop{}
\ssmove {0} {-2} \ssline[dashed] 1 1
\ssmove 7 1 \ssdropbull
\ssline {-1}{-1} \ssdropbull
\ssline 3 1 \ssdropbull
\ssmove 3 1 \ssdropbull
\ssline 3 1 \ssdropbull
\ssline 3 1 \ssdropbull
\ssline 3 1 \ssdropbull
\ssmove {-4} {-2} \ssdropbull \ssline 1 1 \ssdropbull \ssline 1 1\ssdrop{}\ssmove {-1} {-1}
\ssmove 2 {0} \ssdropbull
\ssline 1 1 \ssmove {-6}{-2} \ssmove 5 1
\ssmove{-8}{-2} \ssline 1 1\ssdrop{}\ssmove {-1} {-1}
\gray \ssline {-3} {-1} \black
\ssmove {-2} {-2} \ssdropbull
\ssline 1 1 \ssdropbull
\ssline[dashed] 1 1 \ssmove {-2} {-2}
\ssline 3 1 \ssdropbull
\ssline 3 1 \ssdropbull
\gray \ssline 1 1 \black \ssdropbull \ssline 1 1 \ssdrop{}\ssmove {-2} {-2}
\gray \ssline 3 1 \black \ssdropbull
\ssline[dashed] 1 1 \ssmove {-1} {-1}
\gray \ssline 3 1 \black \ssdropbull
\ssline[dashed] 1 1 \ssmove {-1} {-1}
\ssline 3 1 \ssdropbull
\ssline {-1} {-1} \ssdropbull
\ssmove {-3} {-1} \ssdropbull \ssline 1 1 \ssdropbull \ssline 1 1 \ssdrop{}
\ssmoveto {0}{0} \ssarrow 1 1 \ssdrop{} \ssmove {-1}{-1}
\ssmove 2 {0} \ssdropbull \ssdroplabel{a_1} \ssarrow 1 1 \ssdrop{} \ssmove {-1}{-1}
\ssmove 2 {0}
\ssmove 2 {0} \ssdropbull \ssline 1 1 \ssdropbull \ssarrow 1 1 \ssdrop{} \ssmove {-2}{-2}
\ssmove 2 {0} \ssdropbull \ssarrow 1 1 \ssdrop{} \ssmove {-1}{-1}
\ssmove 2 {0} \ssdropbull \ssarrow 1 1 \ssdrop{} \ssmove {-1}{-1}
\ssmove 2 {0} \ssdropbull \ssarrow 1 1 \ssdrop{} \ssmove {-1}{-1}
\ssmove 2 {0} \ssdropbull \ssarrow 1 1 \ssdrop{} \ssmove {-1}{-1}
\ssmove 2 {0} \ssdropbull \ssarrow 1 1 \ssdrop{} \ssmove {-1}{-1}
\ssmove 2 {0} \ssdropbull \ssarrow 1 1 \ssdrop{} \ssmove {-1}{-1}
\ssmove 2 {0} \ssdropbull \ssarrow 1 1 \ssdrop{} \ssmove {-1}{-1}
\ssmove 2 {0} \ssdropbull \ssarrow 1 1 \ssdrop{} \ssmove {-1}{-1}
\ssmove 2 {0} \ssarrow 1 1 \ssdrop{} \ssmove {-1}{-1}
              \ssdropbull \ssarrow 1 1 \ssdrop{} \ssmove {-1}{-1}
\ssmove 2 {0} \ssdropbull \ssarrow 1 1 \ssdrop{} \ssmove {-1}{-1}
              \ssdropbull \ssarrow 1 1 \ssdrop{} \ssmove {-1}{-1}
\ssmove 2 {0} \ssdropbull \ssarrow 1 1 \ssdrop{} \ssmove {-1}{-1}
\ssmove 2 {0} \ssdropbull \ssline 1 1 \ssdropbull \ssarrow 1 1 \ssdrop{} \ssmove {-2}{-2}
              \ssdropbull \ssarrow 1 1 \ssdrop{} \ssmove {-1}{-1}
\ssmove 2 {0} \ssdropbull \ssarrow 1 1 \ssdrop{} \ssmove {-1}{-1}
              \ssdropbull \ssarrow 1 1 \ssdrop{} \ssmove {-1}{-1}
\ssmove 2 {0} \ssdropbull \ssarrow 1 1 \ssdrop{} \ssmove {-1}{-1}
              \ssdropbull \ssarrow 1 1 \ssdrop{} \ssmove {-1}{-1}
\ssmove 2 {0} \ssdropbull \ssarrow 1 1 \ssdrop{} \ssmove {-1}{-1}
              \ssdropbull \ssarrow 1 1 \ssdrop{} \ssmove {-1}{-1}
\ssmove 2 {0} \ssdropbull \ssarrow 1 1 \ssdrop{} \ssmove {-1}{-1}
              \ssdropbull \ssarrow 1 1 \ssdrop{} \ssmove {-1}{-1}
\ssmove 2 {0} \ssdropbull \ssarrow 1 1 \ssdrop{} \ssmove {-1}{-1}
              \ssdropbull \ssarrow 1 1 \ssdrop{} \ssmove {-1}{-1}
\ssmove 2 {0} \ssdropbull \ssarrow 1 1 \ssdrop{} \ssmove {-1}{-1}
              \ssdropbull \ssarrow 1 1 \ssdrop{} \ssmove {-1}{-1}
\ssmove 2 {0} \ssdropbull \ssarrow 1 1 \ssdrop{} \ssmove {-1}{-1}
              \ssdropbull \ssarrow 1 1 \ssdrop{} \ssmove {-1}{-1}
}{$H^*(\weier,\mbb Z/2)$ localized at 2.\label{fig:wmod2}}

Every $\bullet$ denotes a copy of $\mbb Z/2$. In this picture $g$ and $\Delta$ are also generators and the relation between them is $ga_1^4=h_1^4\Delta$. In the next picture we introduce some notation; namely the elements $x$ and $y$.

\ret{41}{9}{
\ssdropbull
\ssline 3 1 \ssdropbull
\ssline 3 1 \ssdropbull
\ssline 3 1 \ssdropbull
\ssline {-1}{-1} \ssdropbull
\ssline {-1}{-1} \ssdropbull \ssdroplabel{x}
\ssline 3 1 \ssdropbull
\ssline {-1}{-1} \ssdropbull \ssdroplabel[R]{xa_1}
\ssmove 6 2 \ssdropbull
\ssline {-1}{-1} \ssdropbull \ssdroplabel{d}
\ssline 3 1 \ssdropbull
\ssline {-1}{-1} \ssdropbull
\ssline {-1}{-1} \ssdropbull \ssdroplabel{y}
\ssline 3 1 \ssdropbull
\ssline 3 1 \ssdropbull
\ssmove {-1} 1 \ssdropbull \ssdroplabel{g} \xdy
\ssmoveto {24}{0} \ssdropbull \ssdroplabel{\Delta}\xdy
\ssmoveto {40} 8 \ssdropbull \ssline 3 1 \ssdropbull \ssmove {-3}{-1} \ssvoidarrow 1 1
\ssmoveto {0}{0} \ssarrow 1 1 \ssdrop{} \ssmove {-1}{-1}
\ssmove 2 {0} \ssdropbull \ssdroplabel{a_1} \ssarrow 1 1 \ssdrop{} \ssmove {-1}{-1}
\ssmove 2 {0} \ssdropbull\ssarrow 1 1 \ssdrop{} \ssmove {-1}{-1}
\ssmove 2 {0} \ssdropbull\ssarrow 1 1 \ssdrop{} \ssmove {-1}{-1}
\ssmove 2 {0} \ssdropbull\ssarrow 1 1 \ssdrop{} \ssmove {-1}{-1}
\ssmove 2 {0} \ssdropbull\ssarrow 1 1 \ssdrop{} \ssmove {-1}{-1}
\ssmove 2 {0} \ssdropbull\ssarrow 1 1 \ssdrop{} \ssmove {-1}{-1}
\ssmove 2 {0} \ssdropbull\ssarrow 1 1 \ssdrop{} \ssmove {-1}{-1}
\ssmove 2 {0} \ssdropbull\ssarrow 1 1 \ssdrop{} \ssmove {-1}{-1}
\ssmove 2 {0} \ssdropbull\ssarrow 1 1 \ssdrop{} \ssmove {-1}{-1}
\ssmove 2 {0} \ssdropbull\ssarrow 1 1 \ssdrop{} \ssmove {-1}{-1}
\ssmove 2 {0} \ssdropbull\ssarrow 1 1 \ssdrop{} \ssmove {-1}{-1}
\ssmove 2 {0} \ssarrow 1 1 \ssdrop{} \ssmove {-1}{-1} \ssdropbull\ssarrow 1 1 \ssdrop{} \ssmove {-1}{-1}
\ssmove 2 {0} \ssdropbull \ssarrow 1 1 \ssdrop{} \ssmove {-1}{-1} \ssdropbull\ssarrow 1 1 \ssdrop{} \ssmove {-1}{-1}
\ssmove 2 {0} \ssdropbull \ssarrow 1 1 \ssdrop{} \ssmove {-1}{-1} \ssdropbull\ssarrow 1 1 \ssdrop{} \ssmove {-1}{-1}
\ssmove 2 {0} \ssdropbull \ssarrow 1 1 \ssdrop{} \ssmove {-1}{-1} \ssdropbull\ssarrow 1 1 \ssdrop{} \ssmove {-1}{-1}
\ssmove 2 {0} \ssdropbull \ssarrow 1 1 \ssdrop{} \ssmove {-1}{-1} \ssdropbull\ssarrow 1 1 \ssdrop{} \ssmove {-1}{-1}
\ssmove 2 {0} \ssdropbull \ssarrow 1 1 \ssdrop{} \ssmove {-1}{-1} \ssdropbull\ssarrow 1 1 \ssdrop{} \ssmove {-1}{-1}
\ssmove 2 {0} \ssdropbull \ssarrow 1 1 \ssdrop{} \ssmove {-1}{-1} \ssdropbull\ssarrow 1 1 \ssdrop{} \ssmove {-1}{-1}
\ssmove 2 {0} \ssdropbull \ssarrow 1 1 \ssdrop{} \ssmove {-1}{-1} \ssdropbull\ssarrow 1 1 \ssdrop{} \ssmove {-1}{-1}
\ssmove 2 {0} \ssdropbull \ssarrow 1 1 \ssdrop{} \ssmove {-1}{-1} \ssdropbull\ssarrow 1 1 \ssdrop{} \ssmove {-1}{-1}
\ssmoveto {20}{4} \ssarrow 1 1 \ssdrop{} \ssmove {-1}{-1}
\ssmove 2 {0} \ssdropbull\ssarrow 1 1 \ssdrop{} \ssmove {-1}{-1}
\ssmove 2 {0} \ssdropbull\ssarrow 1 1 \ssdrop{} \ssmove {-1}{-1}
\ssmove 2 {0} \ssdropbull\ssarrow 1 1 \ssdrop{} \ssmove {-1}{-1}
}{$H^*(\weier,\mbb Z/2)$ localized at 2.}

In order to compute $H^*(\mell,\mbb Z/2)$, we proceed with localizing this cohomology. Because $c_4\equiv a_1^4 \mod 2$ we might as well invert $a_1$ instead of $c_4$. We know that $xa_1^2=da_1^2=ya_1=0$ and $g=h_1^4a_1^{-4}\Delta$, so inverting $a_1$ yields the following 2-periodic picture, in which every $\ominus$ denotes a copy of $\mbb Z/2[a_1^{-12}\Delta]$.

\ret{-8...32}{3}{
\ssmoveto {-8}{0} \ssdrop{\resizebox{2mm}{!}{$\ominus$}}\ssline 1 1 \ssdrop{\resizebox{2mm}{!}{$\ominus$}} \ssvoidarrow 1 1\ssmove {-1}{-1}
\ssmove 2 {0} \ssdrop{\resizebox{2mm}{!}{$\ominus$}}\ssline 1 1 \ssdrop{\resizebox{2mm}{!}{$\ominus$}} \ssvoidarrow 1 1\ssmove {-1}{-1}
\ssmove 2 {0} \ssdrop{\resizebox{2mm}{!}{$\ominus$}}\ssline 1 1 \ssdrop{\resizebox{2mm}{!}{$\ominus$}} \ssvoidarrow 1 1\ssmove {-1}{-1}
\ssmove 2 {0} \ssdrop{\resizebox{2mm}{!}{$\ominus$}}\ssdroplabel{\scriptstyle a_1^{-1}}\ssline 1 1 \ssdrop{\resizebox{2mm}{!}{$\ominus$}} \ssvoidarrow 1 1\ssmove {-1}{-1}
\ssmove 2 {0} \ssdrop{\resizebox{2mm}{!}{$\ominus$}}\ssline 1 1 \ssdrop{\resizebox{2mm}{!}{$\ominus$}} \ssvoidarrow 1 1\ssmove {-1}{-1}
\ssmove 2 {0} \ssdrop{\resizebox{2mm}{!}{$\ominus$}}\ssdroplabel{\scriptstyle a_1}\ssline 1 1 \ssdrop{\resizebox{2mm}{!}{$\ominus$}} \ssvoidarrow 1 1\ssmove {-1}{-1}
\ssmove 2 {0} \ssdrop{\resizebox{2mm}{!}{$\ominus$}}\ssline 1 1 \ssdrop{\resizebox{2mm}{!}{$\ominus$}} \ssvoidarrow 1 1\ssmove {-1}{-1}
\ssmove 2 {0} \ssdrop{\resizebox{2mm}{!}{$\ominus$}}\ssline 1 1 \ssdrop{\resizebox{2mm}{!}{$\ominus$}} \ssvoidarrow 1 1\ssmove {-1}{-1}
\ssmove 2 {0} \ssdrop{\resizebox{2mm}{!}{$\ominus$}}\ssline 1 1 \ssdrop{\resizebox{2mm}{!}{$\ominus$}} \ssvoidarrow 1 1\ssmove {-1}{-1}
\ssmove 2 {0} \ssdrop{\resizebox{2mm}{!}{$\ominus$}}\ssline 1 1 \ssdrop{\resizebox{2mm}{!}{$\ominus$}} \ssvoidarrow 1 1\ssmove {-1}{-1}
\ssmove 2 {0} \ssdrop{\resizebox{2mm}{!}{$\ominus$}}\ssline 1 1 \ssdrop{\resizebox{2mm}{!}{$\ominus$}} \ssvoidarrow 1 1\ssmove {-1}{-1}
\ssmove 2 {0} \ssdrop{\resizebox{2mm}{!}{$\ominus$}}\ssline 1 1 \ssdrop{\resizebox{2mm}{!}{$\ominus$}} \ssvoidarrow 1 1\ssmove {-1}{-1}
\ssmove 2 {0} \ssdrop{\resizebox{2mm}{!}{$\ominus$}}\ssline 1 1 \ssdrop{\resizebox{2mm}{!}{$\ominus$}} \ssvoidarrow 1 1\ssmove {-1}{-1}
\ssmove 2 {0} \ssdrop{\resizebox{2mm}{!}{$\ominus$}}\ssline 1 1 \ssdrop{\resizebox{2mm}{!}{$\ominus$}} \ssvoidarrow 1 1\ssmove {-1}{-1}
\ssmove 2 {0} \ssdrop{\resizebox{2mm}{!}{$\ominus$}}\ssline 1 1 \ssdrop{\resizebox{2mm}{!}{$\ominus$}} \ssvoidarrow 1 1\ssmove {-1}{-1}
\ssmove 2 {0} \ssdrop{\resizebox{2mm}{!}{$\ominus$}}\ssline 1 1 \ssdrop{\resizebox{2mm}{!}{$\ominus$}} \ssvoidarrow 1 1\ssmove {-1}{-1}
\ssmove 2 {0} \ssdrop{\resizebox{2mm}{!}{$\ominus$}}\ssline 1 1 \ssdrop{\resizebox{2mm}{!}{$\ominus$}} \ssvoidarrow 1 1\ssmove {-1}{-1}
\ssmove 2 {0} \ssdrop{\resizebox{2mm}{!}{$\ominus$}}\ssline 1 1 \ssdrop{\resizebox{2mm}{!}{$\ominus$}} \ssvoidarrow 1 1\ssmove {-1}{-1}
\ssmove 2 {0} \ssdrop{\resizebox{2mm}{!}{$\ominus$}}\ssline 1 1 \ssdrop{\resizebox{2mm}{!}{$\ominus$}} \ssvoidarrow 1 1\ssmove {-1}{-1}
\ssmove 2 {0} \ssdrop{\resizebox{2mm}{!}{$\ominus$}}\ssline 1 1 \ssdrop{\resizebox{2mm}{!}{$\ominus$}} \ssvoidarrow 1 1\ssmove {-1}{-1}
\ssmove 2 {0} \ssdrop{\resizebox{2mm}{!}{$\ominus$}}\ssline 1 1 \ssdrop{\resizebox{2mm}{!}{$\ominus$}} \ssvoidarrow 1 1\ssmove {-1}{-1}
\ssmove 2 {0} \ssdrop{\resizebox{2mm}{!}{$\ominus$}}\ssline 1 1 \ssdrop{\resizebox{2mm}{!}{$\ominus$}} \ssvoidarrow 1 1\ssmove {-1}{-1}
\ssmove 2 {0} \ssdrop{\resizebox{2mm}{!}{$\ominus$}}\ssline 1 1 \ssdrop{\resizebox{2mm}{!}{$\ominus$}} \ssvoidarrow 1 1\ssmove {-1}{-1}
}{$H^*(\weier,\mbb Z/2)[a_1^{-1}]$ localized at 2.}

The element $\Delta$ is a non-zero-divisor generator in $H^*(\weier,\mbb Z/2)$ and the algebra $H^*(\weier,\mbb Z/2)[\Delta^{-1}]$ also has an easy description. It is periodic with period $\Delta$ to both the left and the right, and the map $i_\Delta$ is an inclusion.

\ret{-8...32}{12}{
\ssdrop{\resizebox{2mm}{!}{$\overt$}} \xdy
\ssmove {-1}{1} \ssdropbull \ssname{g} \xdy
\ssmoveto {24}{0}\ssdrop{\resizebox{2mm}{!}{$\overt$}} \xdy
\ssmoveto {-24}{0}\ssdrop{\resizebox{2mm}{!}{$\overt$}} \xdy
\ssmove {-1}{1} \ssdropbull \ssname{g1}\xdy
\ssmove {-1}{1} \ssdropbull \ssname{g2}\xdy
\ssmoveto {-48}{0}\ssdrop{\resizebox{2mm}{!}{$\overt$}} \xdy
\ssmove {-1}{1} \ssdropbull \ssname{g3}\xdy
\ssmove {-1}{1} \ssdropbull \ssname{g4}\xdy
\ssmoveto {-8}{0} \ssdrop{\resizebox{2mm}{!}{$\overt$}}\ssline 1 1 \ssdrop{\resizebox{2mm}{!}{$\overt$}} \ssarrow 1 1 \ssdrop{} \ssmove {-2}{-2}
\ssmove 2 {0} \ssdrop{\resizebox{2mm}{!}{$\overt$}}\ssline 1 1 \ssdrop{\resizebox{2mm}{!}{$\overt$}} \ssarrow 1 1 \ssdrop{} \ssmove {-2}{-2}
\ssmove 2 {0} \ssdrop{\resizebox{2mm}{!}{$\overt$}}
\ssline 1 1 \ssdrop{\resizebox{2mm}{!}{$\overt$}} \ssarrow 1 1 \ssdrop{} \ssmove {-2}{-2}
\ssmove 2 {0} \ssdrop{\resizebox{2mm}{!}{$\overt$}}
\ssline 1 1 \ssdrop{\resizebox{2mm}{!}{$\overt$}} \ssarrow 1 1 \ssdrop{} \ssmove {-2}{-2}
\ssmove 2 {0} \ssline 1 1\ssdrop{\resizebox{2mm}{!}{$\overt$}} \ssarrow 1 1 \ssdrop{} \ssmove {-2}{-2}
\ssmove 2 {0} \ssdrop{\resizebox{2mm}{!}{$\overt$}}
\ssline 1 1 \ssdrop{\resizebox{2mm}{!}{$\overt$}} \ssarrow 1 1 \ssdrop{} \ssmove {-2}{-2}
\ssmove 2 {0} \ssdrop{\resizebox{2mm}{!}{$\overt$}}\ssline 1 1 \ssdrop{\resizebox{2mm}{!}{$\overt$}} \ssarrow 1 1 \ssdrop{} \ssmove {-2}{-2}
\ssmove 2 {0} \ssdrop{\resizebox{2mm}{!}{$\overt$}}\ssline 1 1 \ssdrop{\resizebox{2mm}{!}{$\overt$}} \ssarrow 1 1 \ssdrop{} \ssmove {-2}{-2}
\ssmove 2 {0} \ssdrop{\resizebox{2mm}{!}{$\overt$}}\ssline 1 1 \ssdrop{\resizebox{2mm}{!}{$\overt$}} \ssarrow 1 1 \ssdrop{} \ssmove {-2}{-2}
\ssmove 2 {0} \ssdrop{\resizebox{2mm}{!}{$\overt$}}\ssline 1 1 \ssdrop{\resizebox{2mm}{!}{$\overt$}} \ssarrow 1 1 \ssdrop{} \ssmove {-2}{-2}
\ssmove 2 {0} \ssdrop{\resizebox{2mm}{!}{$\overt$}}\ssline 1 1 \ssdrop{\resizebox{2mm}{!}{$\overt$}} \ssarrow 1 1 \ssdrop{} \ssmove {-2}{-2}
\ssmove 2 {0} \ssdrop{\resizebox{2mm}{!}{$\overt$}}\ssline 1 1 \ssdrop{\resizebox{2mm}{!}{$\overt$}} \ssarrow 1 1 \ssdrop{} \ssmove {-2}{-2}
\ssmove 2 {0} \ssdrop{\resizebox{2mm}{!}{$\overt$}}\ssline 1 1 \ssdrop{\resizebox{2mm}{!}{$\overt$}} \ssarrow 1 1 \ssdrop{} \ssmove {-2}{-2}
\ssmove 2 {0} \ssdrop{\resizebox{2mm}{!}{$\overt$}}\ssline 1 1 \ssdrop{\resizebox{2mm}{!}{$\overt$}} \ssarrow 1 1 \ssdrop{} \ssmove {-2}{-2}
\ssmove 2 {0} \ssdrop{\resizebox{2mm}{!}{$\overt$}}\ssline 1 1 \ssdrop{\resizebox{2mm}{!}{$\overt$}} \ssarrow 1 1 \ssdrop{} \ssmove {-2}{-2}
\ssmove 2 {0} \ssdrop{\resizebox{2mm}{!}{$\overt$}}\ssline 1 1 \ssdrop{\resizebox{2mm}{!}{$\overt$}} \ssarrow 1 1 \ssdrop{} \ssmove {-2}{-2}
\ssmove 2 {0} \ssline 1 1 \ssdrop{\resizebox{2mm}{!}{$\overt$}} \ssarrow 1 1 \ssdrop{} \ssmove {-2}{-2}
\ssmove 2 {0} \ssdrop{\resizebox{2mm}{!}{$\overt$}}\ssline 1 1 \ssdrop{\resizebox{2mm}{!}{$\overt$}} \ssarrow 1 1 \ssdrop{} \ssmove {-2}{-2}
\ssmove 2 {0} \ssdrop{\resizebox{2mm}{!}{$\overt$}}\ssline 1 1 \ssdrop{\resizebox{2mm}{!}{$\overt$}} \ssarrow 1 1 \ssdrop{} \ssmove {-2}{-2}
\ssmove 2 {0} \ssdrop{\resizebox{2mm}{!}{$\overt$}}\ssline 1 1 \ssdrop{\resizebox{2mm}{!}{$\overt$}} \ssarrow 1 1 \ssdrop{} \ssmove {-2}{-2}
\ssmove 2 {0} \ssdrop{\resizebox{2mm}{!}{$\overt$}}\ssline 1 1 \ssdrop{\resizebox{2mm}{!}{$\overt$}} \ssarrow 1 1 \ssdrop{} \ssmove {-2}{-2}
\ssgoto{g}\ssarrow 1 1 \ssdrop{} \ssmove {-1}{-1}
\ssmove 2 {0} \ssdropbull \ssarrow 1 1 \ssdrop{}\ssmove {-1}{-1}
\ssmove 2 {0} \ssdropbull \ssarrow 1 1 \ssdrop{}\ssmove {-1}{-1}
\ssmove 2 {0} \ssdropbull \ssarrow 1 1 \ssdrop{}\ssmove {-1}{-1}
\ssgoto{g1}\ssarrow 1 1 \ssdrop{} \ssmove {-1}{-1}
\ssmove 2 {0} \ssdropbull \ssarrow 1 1 \ssdrop{}\ssmove {-1}{-1}
\ssmove 2 {0} \ssdropbull \ssarrow 1 1 \ssdrop{}\ssmove {-1}{-1}
\ssmove 2 {0} \ssdropbull \ssarrow 1 1 \ssdrop{}\ssmove {-1}{-1}
\ssgoto{g2}\ssarrow 1 1 \ssdrop{} \ssmove {-1}{-1}
\ssmove 2 {0} \ssdropbull \ssarrow 1 1 \ssdrop{}\ssmove {-1}{-1}
\ssmove 2 {0} \ssdropbull \ssarrow 1 1 \ssdrop{}\ssmove {-1}{-1}
\ssmove 2 {0} \ssdropbull \ssarrow 1 1 \ssdrop{}\ssmove {-1}{-1}
\ssgoto{g3}\ssarrow 1 1 \ssdrop{} \ssmove {-1}{-1}
\ssmove 2 {0} \ssdropbull \ssarrow 1 1 \ssdrop{}\ssmove {-1}{-1}
\ssmove 2 {0} \ssdropbull \ssarrow 1 1 \ssdrop{}\ssmove {-1}{-1}
\ssmove 2 {0} \ssdropbull \ssarrow 1 1 \ssdrop{}\ssmove {-1}{-1}
\ssgoto{g4}\ssarrow 1 1 \ssdrop{} \ssmove {-1}{-1}
\ssmove 2 {0} \ssdropbull \ssarrow 1 1 \ssdrop{}\ssmove {-1}{-1}
\ssmove 2 {0} \ssdropbull \ssarrow 1 1 \ssdrop{}\ssmove {-1}{-1}
\ssmove 2 {0} \ssdropbull \ssarrow 1 1 \ssdrop{}\ssmove {-1}{-1}
}{$H^*(\weier,\mbb Z/2)[\Delta^{-1}]$ localized at 2.}

A $\bullet$ still represents a copy of $\mbb Z/2$ and a $\overt$ represents a copy of $\mbb Z/2[a_1^{12}\Delta^{-1}]$.

For $H^*(\weier,\mbb Z/2)[a_1^{-1},\Delta^{-1}]$ we get the following, where every $\oplus$ denotes a copy of $\mbb Z/2[a_1^{-12}\Delta,a_1^{12}\Delta^{-1}]$.

\ret{-8...32}{3}{
\ssmoveto {-8}{0} \ssdrop{\resizebox{2mm}{!}{$\oplus$}}\ssline 1 1 \ssdrop{\resizebox{2mm}{!}{$\oplus$}} \ssvoidarrow 1 1\ssmove {-1}{-1}
\ssmove 2 {0} \ssdrop{\resizebox{2mm}{!}{$\oplus$}}\ssline 1 1 \ssdrop{\resizebox{2mm}{!}{$\oplus$}} \ssvoidarrow 1 1\ssmove {-1}{-1}
\ssmove 2 {0} \ssdrop{\resizebox{2mm}{!}{$\oplus$}}\ssline 1 1 \ssdrop{\resizebox{2mm}{!}{$\oplus$}} \ssvoidarrow 1 1\ssmove {-1}{-1}
\ssmove 2 {0} \ssdrop{\resizebox{2mm}{!}{$\oplus$}}\ssdroplabel{\scriptstyle a_1^{-1}}\ssline 1 1 \ssdrop{\resizebox{2mm}{!}{$\oplus$}} \ssvoidarrow 1 1\ssmove {-1}{-1}
\ssmove 2 {0} \ssdrop{\resizebox{2mm}{!}{$\oplus$}}\ssline 1 1 \ssdrop{\resizebox{2mm}{!}{$\oplus$}} \ssvoidarrow 1 1\ssmove {-1}{-1}
\ssmove 2 {0} \ssdrop{\resizebox{2mm}{!}{$\oplus$}}\ssdroplabel{\scriptstyle a_1}\ssline 1 1 \ssdrop{\resizebox{2mm}{!}{$\oplus$}} \ssvoidarrow 1 1\ssmove {-1}{-1}
\ssmove 2 {0} \ssdrop{\resizebox{2mm}{!}{$\oplus$}}\ssline 1 1 \ssdrop{\resizebox{2mm}{!}{$\oplus$}} \ssvoidarrow 1 1\ssmove {-1}{-1}
\ssmove 2 {0} \ssdrop{\resizebox{2mm}{!}{$\oplus$}}\ssline 1 1 \ssdrop{\resizebox{2mm}{!}{$\oplus$}} \ssvoidarrow 1 1\ssmove {-1}{-1}
\ssmove 2 {0} \ssdrop{\resizebox{2mm}{!}{$\oplus$}}\ssline 1 1 \ssdrop{\resizebox{2mm}{!}{$\oplus$}} \ssvoidarrow 1 1\ssmove {-1}{-1}
\ssmove 2 {0} \ssdrop{\resizebox{2mm}{!}{$\oplus$}}\ssline 1 1 \ssdrop{\resizebox{2mm}{!}{$\oplus$}} \ssvoidarrow 1 1\ssmove {-1}{-1}
\ssmove 2 {0} \ssdrop{\resizebox{2mm}{!}{$\oplus$}}\ssline 1 1 \ssdrop{\resizebox{2mm}{!}{$\oplus$}} \ssvoidarrow 1 1\ssmove {-1}{-1}
\ssmove 2 {0} \ssdrop{\resizebox{2mm}{!}{$\oplus$}}\ssline 1 1 \ssdrop{\resizebox{2mm}{!}{$\oplus$}} \ssvoidarrow 1 1\ssmove {-1}{-1}
\ssmove 2 {0} \ssdrop{\resizebox{2mm}{!}{$\oplus$}}\ssline 1 1 \ssdrop{\resizebox{2mm}{!}{$\oplus$}} \ssvoidarrow 1 1\ssmove {-1}{-1}
\ssmove 2 {0} \ssdrop{\resizebox{2mm}{!}{$\oplus$}}\ssline 1 1 \ssdrop{\resizebox{2mm}{!}{$\oplus$}} \ssvoidarrow 1 1\ssmove {-1}{-1}
\ssmove 2 {0} \ssdrop{\resizebox{2mm}{!}{$\oplus$}}\ssline 1 1 \ssdrop{\resizebox{2mm}{!}{$\oplus$}} \ssvoidarrow 1 1\ssmove {-1}{-1}
\ssmove 2 {0} \ssdrop{\resizebox{2mm}{!}{$\oplus$}}\ssline 1 1 \ssdrop{\resizebox{2mm}{!}{$\oplus$}} \ssvoidarrow 1 1\ssmove {-1}{-1}
\ssmove 2 {0} \ssdrop{\resizebox{2mm}{!}{$\oplus$}}\ssline 1 1 \ssdrop{\resizebox{2mm}{!}{$\oplus$}} \ssvoidarrow 1 1\ssmove {-1}{-1}
\ssmove 2 {0} \ssdrop{\resizebox{2mm}{!}{$\oplus$}}\ssline 1 1 \ssdrop{\resizebox{2mm}{!}{$\oplus$}} \ssvoidarrow 1 1\ssmove {-1}{-1}
\ssmove 2 {0} \ssdrop{\resizebox{2mm}{!}{$\oplus$}}\ssline 1 1 \ssdrop{\resizebox{2mm}{!}{$\oplus$}} \ssvoidarrow 1 1\ssmove {-1}{-1}
\ssmove 2 {0} \ssdrop{\resizebox{2mm}{!}{$\oplus$}}\ssline 1 1 \ssdrop{\resizebox{2mm}{!}{$\oplus$}} \ssvoidarrow 1 1\ssmove {-1}{-1}
\ssmove 2 {0} \ssdrop{\resizebox{2mm}{!}{$\oplus$}}\ssline 1 1 \ssdrop{\resizebox{2mm}{!}{$\oplus$}} \ssvoidarrow 1 1\ssmove {-1}{-1}
\ssmove 2 {0} \ssdrop{\resizebox{2mm}{!}{$\oplus$}}\ssline 1 1 \ssdrop{\resizebox{2mm}{!}{$\oplus$}} \ssvoidarrow 1 1\ssmove {-1}{-1}
\ssmove 2 {0} \ssdrop{\resizebox{2mm}{!}{$\oplus$}}\ssline 1 1 \ssdrop{\resizebox{2mm}{!}{$\oplus$}} \ssvoidarrow 1 1\ssmove {-1}{-1}
}{$H^*(\weier,\mbb Z/2)[a_1^{-1}]$ localized at 2.}

To calculate $H^*(\mell,\mbb Z/2)$, we work with the version of (\ref{eq:vie2}) with $\mbb Z/2$-coefficients.

For the kernel of $D$, we recall from the proof of lemma \ref{lem:ker} that any element $(u,v)$ of the group $\ker D\subset H^*(\weier,\mbb Z/2)[a_1^{-1}]\oplus H^*(\weier,\mbb Z/2)[\Delta^{-1}]$ satisfies $u=i_{a_1}(v)$ and that we can use this to identify $\ker D$ with the subset of elements $v\in H^*(\weier,\mbb Z/2)[\Delta^{-1}]$ such that $i_{a_1}(v)\in H^*(\weier,\mbb Z/2)[a_1^{-1}]$. We note that products in $H^*(\weier,\mbb Z/2)[\Delta^{-1}]$ containing $h_2$, $x$, $d$ or $y$ are sent to zero by $i_{a_1}$, so these products are certainly in $\ker D$. The remaining part of $H^*(\weier,\mbb Z/2)[\Delta^{-1}]$ is spanned by elements of the form $h_1^jg^ka_1^l\Delta^m$ and we compute that $$i_{a_1}(h_1^jg^ka_1^l\Delta^m)=h_1^{j+4k}a_1^{l-4k}\Delta^{k+m}.$$
This belongs to $H^*(\weier,\mbb Z/2)[a_1^{-1}]$ if and only if $k+m\geq0$. We now have a complete description of the kernel of $D$.

We continue with the computation of the cokernel of $D$. This group is generated by the basis elements of $H^*(\weier,\mbb Z/2)[a_1^{-1},\Delta^{-1}]$ that are not hit by $i_{\Delta}$ and $i_{a_1}$. So let's look at an element $a=h_1^xa_1^y\Delta^z$ with $x\geq0$ and $y,z\in\mbb Z$. Then $a\not\in i_\Delta \left(H^*(\weier,\mbb Z/2)[a_1^{-1}]\right)$ precisely when $z<0$. We have that $a\not\in i_{a_1}\in \left(H^*(\weier,\mbb Z/2)[\Delta^{-1}]\right)$ iff it cannot be written in the form $h_1^{j+4k}a_1^{l-4k}\Delta^{k+m}$ with $j,k,l\geq0$. That is if and only if $\lfloor\frac{x}{4}\rfloor+\lfloor\frac{y}{4}\rfloor<0$ or equivalently $x<4\lceil\frac{-y}{4}\rceil$. We conclude that the cokernel is generated by the classes of the elements $h_1^xa_1^y\Delta^z$ with $4\lceil\frac{-y}{4}\rceil>x\geq0$ and $z<0$. 

Now we can compute $H^*(\mell, \mbb Z/2)$ with the short exact sequence (\ref{eq:sh}) induced by the Mayer-Vietoris sequence. Because we are working with $\mbb Z/2$-coefficients, this sequence splits and the ring $H^i(\mell, \mbb Z/2)$ is entirely determined by $\coker^{i-1} D$ and $\ker^i D$. There is only one possible result, namely $$H^i(\mell, \mbb Z/2)=\coker^{i-1} D\oplus\ker^i D.$$ As can be seen in Figure \ref{fig:mell2q} we are only interested in the negative degrees, so we have focussed the following picture on this part.

\ret{-52...-12}{22}
{\ssmoveto {-24}{0} \minxdy
\ssmoveto {-48}{0} \minxdy \ssmove {-1} 1 \minxdy
\ssmoveto {-72}{0} \minxdy \ssmove {-1} 1 \minxdy \ssmove {-1} 1 \minxdy \ssmove {-1} 1 \ssdropbull \ssname{g} \xdy
\ssmoveto {-96}{0} \minxdy \ssmove {-1} 1 \minxdy \ssmove {-1} 1 \minxdy \ssmove {-1} 1 \minxdy \ssmove {-1} 1 \ssdropbull \ssname{g1} \xdy
\ssmoveto {-120}{0} \minxdy \ssmove {-1} 1 \minxdy \ssmove {-1} 1 \minxdy \ssmove {-1} 1 \minxdy \ssmove {-1} 1 \minxdy \ssmove {-1} 1 \ssdropbull \ssname{g2} \xdy
\ssmoveto {-144}{0} \minxdy \ssmove {-1} 1 \minxdy \ssmove {-1} 1 \minxdy \ssmove{-1} 1 \minxdy \ssmove {-1} 1 \minxdy \ssmove {-1} 1 \minxdy \ssmove {-1} 1 \ssdropbull \xdy
\ssgoto{g} \ssarrow 1 1 \ssdrop{}\ssmove{-1}{-1}
\ssgoto{g1} \ssline 1 1 \ssdropbull \ssline 1 1 \ssdropbull \ssline 1 1 \ssdropbull \ssline 1 1 \ssdropbull \ssarrow 1 1 \ssdrop{}\ssmove{-5}{-5}
  \ssmove 2 {0} \ssinfbullstring{1}{1}{3}\ssmove{-2}{-2}
  \ssmove 2 {0} \ssdropbull \ssarrow 1 1 \ssdrop{}\ssmove{-1}{-1}
\ssgoto{g2} \ssline 1 1 \ssdropbull \ssarrow 1 1 \ssdrop{}\ssmove{-2}{-2}
  \ssmove 2 {0} \ssinfbullstring{1}{1}{2} \ssmove{-1}{-1}
  \ssmove 2 {0} \ssinfbullstring{1}{1}{2} \ssmove{-1}{-1}
  \ssmove 2 {0} \ssinfbullstring{1}{1}{2} \ssmove{-1}{-1}
\ssmoveto {-27} 1
  \ssdropbull \ssline 1 1 \ssdropbull \ssline 1 1 \ssdropbull \ssline 1 1 \ssdropbull \ssline[dashed] {-3} {-1} \ssmove {-2} {-2}
  \ssdropbull \ssline 1 1 \ssdropbull \ssline 1 1 \ssdropbull \ssline 1 1 \ssdropbull \ssmove {-3}{-3}\ssmove {-2} {0}
  \ssdropbull \ssline 1 1 \ssdropbull \ssline 1 1 \ssdropbull \ssline 1 1 \ssdropbull \ssmove {-3}{-3}\ssmove {-2} {0}
  \ssdropbull \ssline 1 1 \ssdropbull \ssline 1 1 \ssdropbull \ssline 1 1 \ssdropbull \ssmove {-3}{-3}\ssmove {-2} {0}
  \ssdropbull \ssline 1 1 \ssdropbull \ssline 1 1 \ssdropbull \ssline 1 1 \ssdropbull \ssline 1 1
    \ssdropbull \ssline 1 1 \ssdropbull \ssline 1 1 \ssdropbull \ssline 1 1 \ssdropbull \ssline[dashed]{-3}{-1} \ssmove{-6}{-6}
  \ssdropbull \ssline 1 1 \ssdropbull \ssline 1 1 \ssdropbull \ssline 1 1 \ssdropbull \ssline 1 1
    \ssdropbull \ssline 1 1 \ssdropbull \ssline 1 1 \ssdropbull \ssline 1 1 \ssdropbull \ssmove {-7}{-7}\ssmove {-2} {0}
  \ssdropbull \ssline 1 1 \ssdropbull \ssline 1 1 \ssdropbull \ssline 1 1 \ssdropbull \ssline 1 1
    \ssdropbull \ssline 1 1 \ssdropbull \ssline 1 1 \ssdropbull \ssline 1 1 \ssdropbull \ssmove {-7}{-7}\ssmove {-2} {0}
  \ssdropbull \ssline 1 1 \ssdropbull \ssline 1 1 \ssdropbull \ssline 1 1 \ssdropbull \ssline 1 1
    \ssdropbull \ssline 1 1 \ssdropbull \ssline 1 1 \ssdropbull \ssline 1 1 \ssdropbull \ssmove {-7}{-7}\ssmove {-2} {0}
  \ssdropbull \ssline 1 1 \ssdropbull \ssline 1 1 \ssdropbull \ssline 1 1 \ssdropbull \ssline 1 1
    \ssdropbull \ssline 1 1 \ssdropbull \ssline 1 1 \ssdropbull \ssline 1 1 \ssdropbull \ssline 1 1
    \ssdropbull \ssline 1 1 \ssdropbull \ssline 1 1 \ssdropbull \ssline 1 1 \ssdropbull \ssline[dashed]{-3}{-1} \ssmove{-10}{-10}
  \ssdropbull \ssline 1 1 \ssdropbull \ssline 1 1 \ssdropbull \ssline 1 1 \ssdropbull \ssline 1 1
    \ssdropbull \ssline 1 1 \ssdropbull \ssline 1 1 \ssdropbull \ssline 1 1 \ssdropbull \ssline 1 1
    \ssdropbull \ssline 1 1 \ssdropbull \ssline 1 1 \ssdropbull \ssline 1 1 \ssdropbull \ssmove {-11}{-11}\ssmove {-2} {0}
  \ssdropbull \ssline 1 1 \ssdropbull \ssline 1 1 \ssdropbull \ssline 1 1 \ssdropbull \ssline 1 1
    \ssdropbull \ssline 1 1 \ssdropbull \ssline 1 1 \ssdropbull \ssline 1 1 \ssdropbull \ssline 1 1
    \ssdropbull \ssline 1 1 \ssdropbull \ssline 1 1 \ssdropbull \ssline 1 1 \ssdropbull \ssmove {-11}{-11}\ssmove {-2} {0}
  \ssdropbull \ssline 1 1 \ssdropbull \ssline 1 1 \ssdropbull \ssline 1 1 \ssdropbull \ssline 1 1
    \ssdropbull \ssline 1 1 \ssdropbull \ssline 1 1 \ssdropbull \ssline 1 1 \ssdropbull \ssline 1 1
    \ssdropbull \ssline 1 1 \ssdropbull \ssline 1 1 \ssdropbull \ssline 1 1 \ssdropbull \ssmove {-11}{-11}\ssmove {-2} {0}
  \ssdropbull \ssline 1 1 \ssdropbull \ssline 1 1 \ssdropbull \ssline 1 1 \ssdropbull \ssline[dashed]{-3}{-1} \ssmove {0}{-2}
  \ssdropbull \ssline 1 1 \ssdropbull \ssline 1 1 \ssdropbull \ssline 1 1 \ssdropbull \ssline 1 1
    \ssdropbull \ssline 1 1 \ssdropbull \ssline 1 1 \ssdropbull \ssline 1 1 \ssdropbull \ssline 1 1
    \ssdropbull \ssline 1 1 \ssdropbull \ssline 1 1 \ssdropbull \ssline 1 1 \ssdropbull \ssline 1 1
    \ssdropbull \ssline 1 1 \ssdropbull \ssline 1 1 \ssdropbull \ssline 1 1 \ssdropbull \ssline[dashed]{-3}{-1} \ssmove{-14}{-14}
  \ssdropbull \ssline 1 1 \ssdropbull \ssline 1 1 \ssdropbull \ssline 1 1 \ssdropbull \ssmove {-3}{-3}
  \ssdropbull \ssline 1 1 \ssdropbull \ssline 1 1 \ssdropbull \ssline 1 1 \ssdropbull \ssline 1 1
    \ssdropbull \ssline 1 1 \ssdropbull \ssline 1 1 \ssdropbull \ssline 1 1 \ssdropbull \ssline 1 1
    \ssdropbull \ssline 1 1 \ssdropbull \ssline 1 1 \ssdropbull \ssline 1 1 \ssdropbull \ssline 1 1
    \ssdropbull \ssline 1 1 \ssdropbull \ssline 1 1 \ssdropbull \ssline 1 1 \ssdropbull \ssmove {-15}{-15}\ssmove {-2} {0}
  \ssdropbull \ssline 1 1 \ssdropbull \ssline 1 1 \ssdropbull \ssline 1 1 \ssdropbull \ssmove {-3}{-3}
  \ssdropbull \ssline 1 1 \ssdropbull \ssline 1 1 \ssdropbull \ssline 1 1 \ssdropbull \ssline 1 1
    \ssdropbull \ssline 1 1 \ssdropbull \ssline 1 1 \ssdropbull \ssline 1 1 \ssdropbull \ssline 1 1
    \ssdropbull \ssline 1 1 \ssdropbull \ssline 1 1 \ssdropbull \ssline 1 1 \ssdropbull \ssline 1 1
    \ssdropbull \ssline 1 1 \ssdropbull \ssline 1 1 \ssdropbull \ssline 1 1 \ssdropbull \ssmove {-15}{-15}\ssmove {-2} {0}
  \ssdropbull \ssline 1 1 \ssdropbull \ssline 1 1 \ssdropbull \ssline 1 1 \ssdropbull \ssmove {-3}{-3}
  \ssdropbull \ssline 1 1 \ssdropbull \ssline 1 1 \ssdropbull \ssline 1 1 \ssdropbull \ssline 1 1
    \ssdropbull \ssline 1 1 \ssdropbull \ssline 1 1 \ssdropbull \ssline 1 1 \ssdropbull \ssline 1 1
    \ssdropbull \ssline 1 1 \ssdropbull \ssline 1 1 \ssdropbull \ssline 1 1 \ssdropbull \ssline 1 1
    \ssdropbull \ssline 1 1 \ssdropbull \ssline 1 1 \ssdropbull \ssline 1 1 \ssdropbull \ssmove {-15}{-15}\ssmove {-2} {0}
  \ssdropbull \ssline 1 1 \ssdropbull \ssline 1 1 \ssdropbull \ssline 1 1 \ssdropbull \ssline 1 1
    \ssdropbull \ssline 1 1 \ssdropbull \ssline 1 1 \ssdropbull \ssline 1 1 \ssdropbull \ssline[dashed]{-3}{-1} \ssmove {-4}{-6}
  \ssdropbull \ssline 1 1 \ssdropbull \ssline 1 1 \ssdropbull \ssline 1 1 \ssdropbull \ssline 1 1
    \ssdropbull \ssline 1 1 \ssdropbull \ssline 1 1 \ssdropbull \ssline 1 1 \ssdropbull \ssline 1 1
    \ssdropbull \ssline 1 1 \ssdropbull \ssline 1 1 \ssdropbull \ssline 1 1 \ssdropbull \ssline 1 1
    \ssdropbull \ssline 1 1 \ssdropbull \ssline 1 1 \ssdropbull \ssline 1 1 \ssdropbull \ssline 1 1
    \ssdropbull \ssline 1 1 \ssdropbull \ssline 1 1 \ssdropbull \ssline 1 1 \ssdropbull \ssline[dashed]{-3}{-1} \ssmove{-18}{-18}
  \ssdropbull \ssline 1 1 \ssdropbull \ssline 1 1 \ssdropbull \ssline 1 1 \ssdropbull \ssline 1 1
    \ssdropbull \ssline 1 1 \ssdropbull \ssline 1 1 \ssdropbull \ssline 1 1 \ssdropbull \ssline 1 1
    \ssdropbull \ssline 1 1 \ssdropbull \ssline 1 1 \ssdropbull \ssline 1 1 \ssdropbull \ssline 1 1
    \ssdropbull \ssline 1 1 \ssdropbull \ssline 1 1 \ssdropbull \ssline 1 1 \ssdropbull \ssline 1 1
    \ssdropbull \ssline 1 1 \ssdropbull \ssline 1 1 \ssdropbull \ssline 1 1 \ssdropbull \ssmove {-19}{-19}\ssmove {-2} {0}
  \ssdropbull \ssline 1 1 \ssdropbull \ssline 1 1 \ssdropbull \ssline 1 1 \ssdropbull \ssline 1 1
    \ssdropbull \ssline 1 1 \ssdropbull \ssline 1 1 \ssdropbull \ssline 1 1 \ssdropbull \ssline 1 1
    \ssdropbull \ssline 1 1 \ssdropbull \ssline 1 1 \ssdropbull \ssline 1 1 \ssdropbull \ssline 1 1
    \ssdropbull \ssline 1 1 \ssdropbull \ssline 1 1 \ssdropbull \ssline 1 1 \ssdropbull \ssline 1 1
    \ssdropbull \ssline 1 1 \ssdropbull \ssline 1 1 \ssdropbull \ssline 1 1 \ssdropbull \ssmove {-19}{-19}\ssmove {-2} {0}
  \ssdropbull \ssline 1 1 \ssdropbull \ssline 1 1 \ssdropbull \ssline 1 1 \ssdropbull \ssline 1 1
    \ssdropbull \ssline 1 1 \ssdropbull \ssline 1 1 \ssdropbull \ssline 1 1 \ssdropbull \ssline 1 1
    \ssdropbull \ssline 1 1 \ssdropbull \ssline 1 1 \ssdropbull \ssline 1 1 \ssdropbull \ssline 1 1
    \ssdropbull \ssline 1 1 \ssdropbull \ssline 1 1 \ssdropbull \ssline 1 1 \ssdropbull \ssline 1 1
    \ssdropbull \ssline 1 1 \ssdropbull \ssline 1 1 \ssdropbull \ssline 1 1 \ssdropbull \ssmove {-19}{-19}\ssmove {-2} {0}
  \ssdropbull \ssline 1 1 \ssdropbull \ssline 1 1 \ssdropbull \ssline 1 1 \ssdropbull \ssline 1 1
    \ssdropbull \ssline 1 1 \ssdropbull \ssline 1 1 \ssdropbull \ssline 1 1 \ssdropbull \ssline 1 1
    \ssdropbull \ssline 1 1 \ssdropbull \ssline 1 1 \ssdropbull \ssline 1 1 \ssdropbull \ssline 1 1
    \ssdropbull \ssline 1 1 \ssdropbull \ssline 1 1 \ssdropbull \ssline 1 1 \ssdropbull \ssline 1 1
    \ssdropbull \ssline 1 1 \ssdropbull \ssline 1 1 \ssdropbull \ssline 1 1 \ssdropbull \ssline 1 1
    \ssdropbull \ssline 1 1 \ssdropbull \ssline 1 1 \ssdropbull \ssline 1 1 \ssdropbull \ssmove {-23}{-23}\ssmove {-2} {0}
  \ssdropbull \ssline 1 1 \ssdropbull \ssline 1 1 \ssdropbull \ssline 1 1 \ssdropbull \ssline 1 1
    \ssdropbull \ssline 1 1 \ssdropbull \ssline 1 1 \ssdropbull \ssline 1 1 \ssdropbull \ssline 1 1
    \ssdropbull \ssline 1 1 \ssdropbull \ssline 1 1 \ssdropbull \ssline 1 1 \ssdropbull \ssline 1 1
    \ssdropbull \ssline 1 1 \ssdropbull \ssline 1 1 \ssdropbull \ssline 1 1 \ssdropbull \ssline 1 1
    \ssdropbull \ssline 1 1 \ssdropbull \ssline 1 1 \ssdropbull \ssline 1 1 \ssdropbull \ssline 1 1
    \ssdropbull \ssline 1 1 \ssdropbull \ssline 1 1 \ssdropbull \ssline 1 1 \ssdropbull \ssmove {-23}{-23}\ssmove {-2} {0}
  \ssdropbull \ssline 1 1 \ssdropbull \ssline 1 1 \ssdropbull \ssline 1 1 \ssdropbull \ssline 1 1
    \ssdropbull \ssline 1 1 \ssdropbull \ssline 1 1 \ssdropbull \ssline 1 1 \ssdropbull \ssline 1 1
    \ssdropbull \ssline 1 1 \ssdropbull \ssline 1 1 \ssdropbull \ssline 1 1 \ssdropbull \ssline 1 1
    \ssdropbull \ssline 1 1 \ssdropbull \ssline 1 1 \ssdropbull \ssline 1 1 \ssdropbull \ssline 1 1
    \ssdropbull \ssline 1 1 \ssdropbull \ssline 1 1 \ssdropbull \ssline 1 1 \ssdropbull \ssline 1 1
    \ssdropbull \ssline 1 1 \ssdropbull \ssline 1 1 \ssdropbull \ssline 1 1 \ssdropbull \ssmove {-23}{-23}\ssmove {-2} {0}
  \ssdropbull \ssline 1 1 \ssdropbull \ssline 1 1 \ssdropbull \ssline 1 1 \ssdropbull \ssline 1 1
    \ssdropbull \ssline 1 1 \ssdropbull \ssline 1 1 \ssdropbull \ssline 1 1 \ssdropbull \ssline 1 1
    \ssdropbull \ssline 1 1 \ssdropbull \ssline 1 1 \ssdropbull \ssline 1 1 \ssdropbull \ssline 1 1
    \ssdropbull \ssline 1 1 \ssdropbull \ssline 1 1 \ssdropbull \ssline 1 1 \ssdropbull \ssline 1 1
    \ssdropbull \ssline 1 1 \ssdropbull \ssline 1 1 \ssdropbull \ssline 1 1 \ssdropbull \ssline 1 1
    \ssdropbull \ssline 1 1 \ssdropbull \ssline 1 1 \ssdropbull \ssline 1 1 \ssdropbull \ssmove {-23}{-23}\ssmove {-2} {0}
 }{$H^*(\mell,\mbb Z/2)$ localized at 2.}

This resolves all the issues raised in (\ref{eq:cases2}) and we find that
\begin{align*}
&H^{0,-24k+4}(\mell,\mbb Z/2)\simeq0 &\quad \textrm{ so }\qquad& H^{1,-24k+4}(\mell)\simeq (Z_{(2)})^k,\\
&H^{4j+5,-24k+4}(\mell,\mbb Z/2)\simeq(\mbb Z/2)^{k} &\quad \textrm{ so }\qquad& H^{4j+5,-24k+4}(\mell)\simeq(\mbb Z/2)^{k-1}\times\mbb Z/8,\\
&H^{4j+3,-24k-4}(\mell,\mbb Z/2)\simeq(\mbb Z/2)^{k+1} &\quad \textrm{ so }\qquad& H^{4j+3,-24k-4}(\mell) \simeq(\mbb Z/2)^{k+1},\\
&H^{4j+3,-24k-12}(\mell,\mbb Z/2)\simeq(\mbb Z/2)^{k+1} &\quad \textrm{ so }\qquad& H^{4j+3,-24k-12}(\mell)\simeq(\mbb Z/2)^{k+1},\\
&H^{4j+2,-24k-14}(\mell,\mbb Z/2)\simeq(\mbb Z/2)^{k+1} &\quad \textrm{ so }\qquad& H^{4j+2,-24k-14}(\mell)\simeq(\mbb Z/2)^{k+1}.
\end{align*}

\subsection{The elliptic spectral sequence}

We first look at the multiplicative structure of $H^*(\mell)$ for a moment. For the bidegree $H^{1,-20}(\mell)\simeq \mbb Z_{(2)}$ the short exact sequence
$$0\longrightarrow\coker^{0,-20} D\longrightarrow H^{1,-20}(\mell)\longrightarrow\ker^{1,-20} D\longrightarrow0,$$
boils down to
$$0\longrightarrow \mbb Z_{(2)}\longrightarrow \mbb Z_{(2)}\longrightarrow \mbb Z/4\longrightarrow 0.$$ So the map $\coker^{0,-20} D\longrightarrow H^{1,-20}(\mell)$ is multiplication by 4 and the generator of $H^{1,-20}(\mell)$ is $\frac14$ times the image of the generator of $\coker^{0,-20} D$. We denote the generator of $\coker^{0,-20} D$ by $c_4^{-1}c_6\Delta^{-1}$, hence we denote the generator of $H^{1,-20}(\mell)$ by $[\frac14c_4^{-1}c_6\Delta^{-1}]$. This generator is sent to the element $h_2\Delta^{-1}$ in $\ker^{1,-20} D$, which we view as a subset of $H^*(\weier)[\Delta^{-1}]$. Hence we denote the generator of $H^{1,-20}(\mell)$ also by $\langle h_2\Delta^{-1}\rangle$.

As before, we will use the notation $[\frac1cxy]$ with $c\in\mbb N$ for an element of $H^*(\mell)$ when $x$ or $y$ is not an element of $H^*(\mell)$ and when $c$ times $[\frac1cxy]$ corresponds to the product $xy$ in the cokernel of $D$. We will use the notation $\langle xy\rangle$ for an element of $H^*(\mell)$ when $x$ or $y$ is not in $H^*(\mell)$ and when this element corresponds to the product $xy$ in the kernel of $D$.

We recall from the beginning of this section that $dh_2^2=4g$ in $H^*(\weier)$, as well as the relations stated in (\ref{eq:c41}) and (\ref{eq:c42}). We remark that the generator of $H^{4,-24}(\mell)$ is $\langle 2g\Delta^{-2}\rangle$ and we compute that $h_2\langle dh_2\Delta^{-2}\rangle=2\langle 2g\Delta^{-2}\rangle$. Let $f$ denote the generator of $H^{5,-20}(\weier)$ (see figure \ref{fig:mell2}). Because $2h_1=0$, we observe that $h_1^4[c_4^{-2}c_6\Delta^{-1}]\in H^{5,-20}(\mell)$ is an element of order 2. Hence we find that $h_1^4[c_4^{-2}c_6\Delta^{-1}]=4f$. Analogously we see that $h_2\langle 2g\Delta^{-2}\rangle\in H^{5,-20}(\mell)$ must be an element of order 4, which implies an $h_2\langle2g\Delta^{-2}\rangle=2f$.

From the $E_5$-sheet of the spectral sequence on there is another interesting extension in the negative degrees. This extension follows from and is very similar to the extension $4\cdot gh_2=gh_1^3$ in the positive degrees. Let us denote the generator $h_1[c_4^{-2}c_6\Delta^{-1}]$ of $H^{2,-26}(\mell)$ by $g'$ (see figure \ref{fig:mell2}). Then the computation $$h_1^3g'\Delta^2= h_1^4[c_4^{-2}c_6\Delta^{-1}]\Delta^2 = 4f\Delta^2=4\langle h_2g\Delta^{-2}\rangle\Delta^2=\langle h_1^3g\Delta^{-2}\rangle\Delta^2=h_1^3g$$ implies that $g'\Delta^2=g$ and $2\cdot g'=\langle2g\Delta^{-1}\rangle$, which is the generator of the class $H^{4,-24}(\mell)$.

Looking at the resulting picture, which is drawn on the next page, we observe a rotational symmetry between the positive and the negative degrees. The multiplicative structure in the neighborhood of $g^k\Delta^l$ for $k+l<0$ looks precisely like the multiplicative structure in the neighborhood of $g^k\Delta^l$ for $k+l\geq0$ rotated $180$ degrees. This phenomenon can also be observed in $H^*(\mell,\mbb Z/2)$.

\clearpage

\begin{figure}
\begin{minipage}{.85\linewidth}
\includepdf[pagecommand={}]{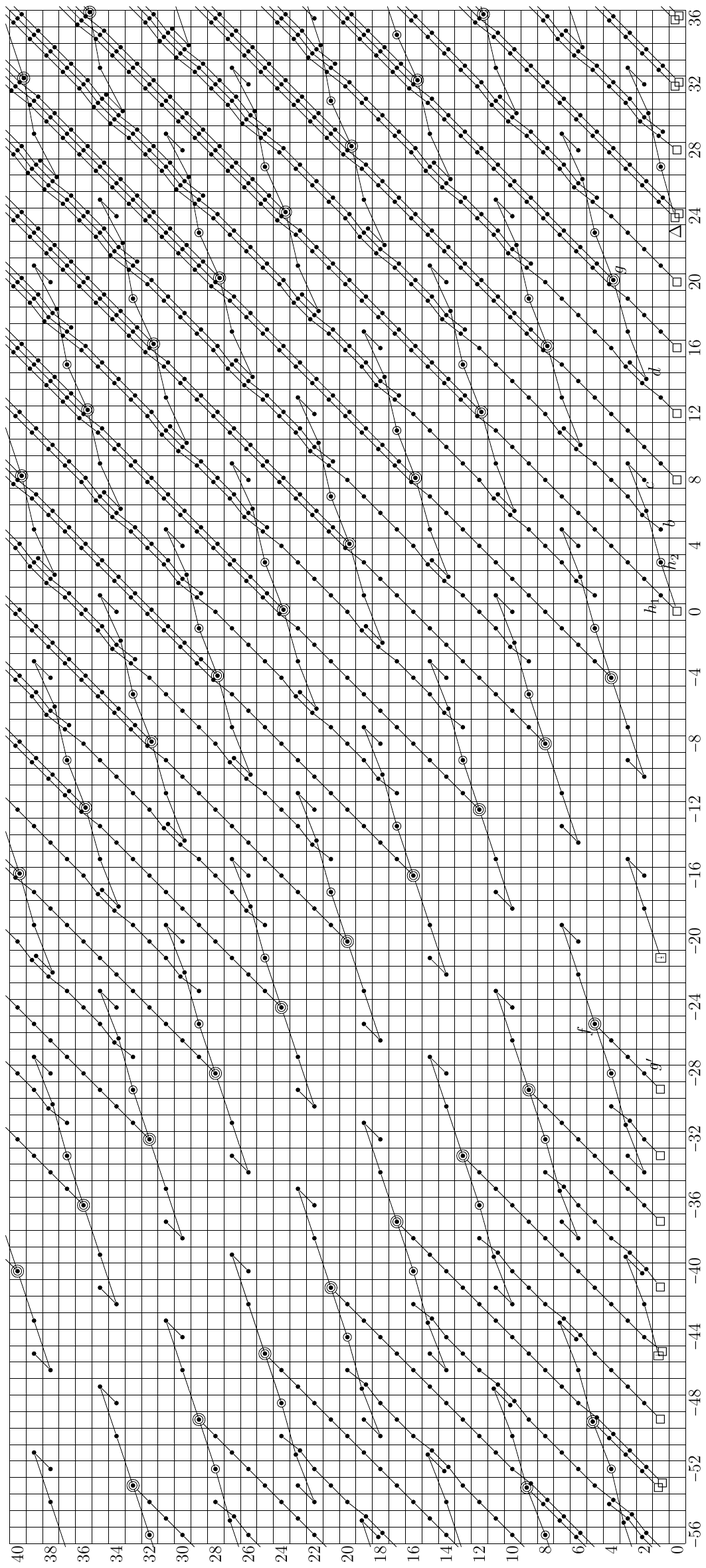}
\end{minipage}\hfill
\begin{minipage}{.15\linewidth}
\rotcaption{$H^*(\mell)$ localized at 2 \label{fig:mell2}}
\end{minipage}
\end{figure}

\clearpage

We now have the $E_2$-sheet of the elliptic spectral sequence (\ref{eq:ess}) which converges to $\pi_*\Tmf$. We would like to compute the differentials in this spectral sequence. We start with the differentials that are calculated by Bauer for the spectral sequence $$H^q\left(\weier,\omega^{\otimes p}\right)\Rightarrow\pi_{2p-q}\tmf.$$ We saw in the introduction that this relates to the spectral sequence of $\pi_*\TMF$ by $$H^q\left(\mathcal M_{ell},\omega^{\otimes p}\right)\simeq H^q\left(\weier,\omega^{\otimes p}\right)[\Delta^{-1}]\Rightarrow\pi_{2p-q}\tmf[\Delta^{-24}]\simeq\pi_{2p-q}\TMF.$$ Hence the differentials in this spectral sequence are multiples of the ones calculated by Bauer by factors $\Delta^{-24k}$. The inclusion of stacks $\mathcal{M}_{ell}\hookrightarrow\mell$ induces a map of cohomologies $H^*\left(\mell\right)\to H^*\left(\mathcal M_{ell}\right)$. This map can be factored as \begin{equation}\label{eq:diff}H^*\left(\mell\right)\stackrel{\rho}{\longrightarrow}H^*\left(\mell\right)[c_4^{-1}]\oplus H^*\left(\mell\right)[\Delta^{-1}]\longtwoheadrightarrow H^*\left(\mell\right)[\Delta^{-1}]\simeq H^*\left(\mathcal M_{ell}\right).\end{equation} 
It induces a map from the spectral sequence for $\pi_*\Tmf$ to the spectral sequence for $\pi_*\TMF$ and we pull back the differentials of this last spectral sequence along this map.

Secondly we calculate the $d_3$-differentials using the Leibniz rule $$d_3(tb)=(d_3t)b+t(d_3b)$$ with $b\in H^{1,6}\left(\mell\right)$ and $t$ an element of the form $h_1^j[c_4^{-k}c_6^l\Delta^{-m}]$. We already know that $d_3b=h_1^4$, so when $d_3(tb)=0$ and $h_1^4t\neq 0$ we find that $(d_3t)b=t(d_3b)=th_1^4\neq0$. Hence $d_3t\neq0$ and there is a $d_3$-differential supported by $t$. Because $bc_6=0$, we know that $d_3(tb)=d_3(0)=0$ for all $t$ which satisfy $l=1$. Hence all $t$ satisfying $l=1$ and $h_1^4t\neq0$ support a differential. This leaves only room for $d_3$-differentials from $h_1^2[c_4^{-1}\Delta^{-m}]$ to $h_1^5[c_4^{-3}c_6\Delta^{-m}]$ for all $m\geq1$. Would this differential exist, then it would imply a $d_3$-differential between $h_1^3[c_4^{-1}\Delta^{-m}]$ and $h_1^6[c_4^{-3}c_6\Delta^{-m}]$. Because the latter differential does not exist, neither does the former.

We have already noted that $f$ and $4f$ are in the same bidegree, as opposed to $h_2g$ and $4h_2g$, and that $g'$ and $2g'$ are in different bidegrees (from the $E_5$-sheet on), as opposed to $g$ and $2g$. This has as a consequence that a $d_n$-differential in the spectral sequence for $\pi_*\TMF$ might constitute, via the map (\ref{eq:diff}), in a $d_{n\pm2}$-differential (for $(n\pm2)\geq5$) in the spectral sequence for $\pi_*\Tmf$. In the spectral sequence for $\pi_*\TMF$ there are no $d_k$-differentials for $k>23$, so in the present case there are no $d_k$-differentials with $k>25$. Hence the spectral sequence collapses at the $E_{26}$-sheet, meaning that $E_\infty$-sheet equals the $E_{26}$-sheet.

Let $e_2$ be the function assigning to a non-zero integer $a$ the highest integer $b$ such that $2^b$ divides $a$, together with the convenient choice of $e_2(0)=3$.  Define the $\mathbb Z$-module $R^+$ as 
\begin{multline}\label{eq:r+}
R^+=\left(\mathbb Z_{(2)}\oplus 2c_6\mathbb Z_{(2)}\oplus h_1\mathbb Z/2\mathbb Z\oplus h_1^2\mathbb Z/2\mathbb Z\right)\left\{c_4^j\Delta^l\right\}\hspace{-2mm}{\tiny{\begin{array}{l}j>0 \\ 7\geq l\geq 0 \end{array}}}\oplus\bigoplus_{l=0}^7 \Delta^l\left(2^{3-e_2(l)}\mathbb Z_{(2)}\oplus2c_6\mathbb Z_{(2)}\right),
\end{multline}
with $h_1^ic_4^jc_6^k\Delta^l$ in degree $i+8j+12k+24l$, of which a few elements are drawn in filtration $i$ in Figure \ref{fig:pos2}. Define $S^+$ as the direct sum of the other groups of degree smaller than 192 drawn in Figure \ref{fig:pos2}. Notice that these are all in filtration 1 till 22. Similarly, define $R^-$ as the $\mathbb Z$-module 
\begin{multline}\label{eq:r-}
R^-=\left(\mathbb Z_{(2)}\oplus 2c_6\mathbb Z_{(2)}\oplus h_1\mathbb Z/2\mathbb Z\oplus h_1^2\mathbb Z/2\mathbb Z\right)\left\{[c_4^j\Delta^{l-1}]\right\}\hspace{-2mm}{\tiny{\begin{array}{l}j<-1 \\ -7\leq l\leq 0 \end{array}}}\oplus\\ \bigoplus_{l=-7}^0[c_4^{-1}\Delta^{l-1}]\left(\mathbb Z_{(2)}\oplus2^{e_2(l)-3}\cdot2c_6\mathbb Z_{(2)}\oplus h_1\mathbb Z/2\mathbb Z\oplus h_1^2\mathbb Z/2\mathbb Z\right),
\end{multline}
with $[h_1^ic_4^jc_6^k\Delta^l]$ in degree $i+8j+12k+24l-1$, of which only the terms $2^{e_2(n)-2}[c_4^{-1}c_6\Delta^{n-1}]$ are drawn in filtration 1 in Figure \ref{fig:neg2}. Define $S^-$ as the direct sum of the other groups of degree bigger then -212 drawn in Figure \ref{fig:neg2}. Notice that these are all in filtration 2 till 23.

\begin{theorem}\label{th:2}
Additively $\pi_*\Tmf_{(2)}$ is given by
$$\pi_*\Tmf_{(2)}=\left(\mathbb Z[\Delta^8]\otimes (R^+\oplus S^+)\right)\oplus\left(\mathbb Z[\Delta^{-8}]\otimes (R^-\oplus S^-)\right),$$
where  $R^+$, $R^-$, $S^+$ and $S^-$ are as defined above, in (\ref{eq:r+}), (\ref{eq:r-})and the subsequent sentences.

The multiplicative structure is generated by 
\renewcommand{\theenumi}{\Roman{enumi}}
\newcounter{rowcount}
\begin{enumerate}
\item multiplications by $c_4$, $c_6$, $2^{3-e_2(l)}\Delta^l$, with $1\leq l\leq8$, and $c_4^3-c_6^2-1728\Delta=0$,
\item multiplications by $h_1$ and $h_2$, which are all drawn in the Figures \ref{fig:pos2} and \ref{fig:neg2}, 
\item multiplications by $c$, $d$, $g$, $\langle h_1\Delta\rangle$, $\langle2h_2\Delta\rangle$, $\langle c\Delta\rangle$, $\langle h_2\Delta^2\rangle$ pulled back from $H^*(\weier)[c_4^{-1},\Delta^{-1}]$ along $D$, together with the extensions\bigskip

\begin{tabular}{@{\stepcounter{rowcount}\makebox[3em][r]{(\arabic{rowcount})}\hspace*{2\tabcolsep}}ll}
$c\langle d\Delta^{8i}\rangle = h_1^2\langle g\Delta^{8i}\rangle$ & for all $i\in\mathbb Z$,\\
$d\langle d\Delta^{8i}\rangle = c\langle g\Delta^{8i}\rangle$ & for all $i\in\mathbb Z$,\\
$d^2\langle d \Delta^{8i}\rangle = h_1^2g\langle g \Delta^{8i}\rangle$ & for all $i\in\mathbb Z$,\\
$c\cdot \langle c\Delta^{8i+1}\rangle =2g\langle g\Delta^{8i}\rangle$ & for all $i\in\mathbb Z$,\\
 $c\cdot \langle 2h_2\Delta^{8i+1}\rangle = h_1d\langle g\Delta^{8i}\rangle$ & for all $i\in\mathbb Z$,\\
 $d\cdot \langle 2h_2\Delta^{8i+1}\rangle = h_1g\langle g\Delta^{8i}\rangle$ & for all $i\in\mathbb Z$,\\
 $c\cdot\langle h_2\Delta^{8i+2}\rangle = \langle h_1dg\Delta^{8i+1}\rangle$ & for all $i\in\mathbb Z$,\\
 $\langle h_1\Delta\rangle^4 = g^5$,&\\
 $\langle 2h_2\Delta\rangle^2 = dg^2$,&\\
 $\langle h_1\Delta\rangle\cdot\langle h_2d\Delta^4\rangle=\langle h_1^2g^2\Delta^4\rangle$.&
\end{tabular}
\end{enumerate}
\end{theorem}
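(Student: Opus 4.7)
The plan is to run the elliptic spectral sequence (\ref{eq:ess}) at the prime 2, whose $E_2$-page $H^*(\mell)$ is given by Figure \ref{fig:mell2}, then read off $\pi_*\Tmf_{(2)}$ from the $E_\infty$-page and resolve the resulting extensions.

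First I would import differentials from Bauer's calculation for $\pi_*\tmf$ and the resulting spectral sequence for $\pi_*\TMF$. The inclusion $\mathcal M_{ell}\hookrightarrow\mell$ induces a map of spectral sequences which at $E_2$ factors as in (\ref{eq:diff}). Since Bauer's differentials for $\pi_*\tmf_{(2)}$ persist after inverting $\Delta$ to give all differentials in the $\pi_*\TMF_{(2)}$ spectral sequence, pulling back along (\ref{eq:diff}) determines the differentials on every class of $H^*(\mell)$ whose image in $H^*(\mell)[\Delta^{-1}]$ supports one. Next, I would compute the $d_3$'s on the remaining negative-degree classes using the Leibniz rule $d_3(tb) = (d_3 t)b + t(d_3 b)$: since $d_3 b = h_1^4$, whenever $d_3(tb)=0$ and $h_1^4 t \neq 0$ one must have $d_3 t \neq 0$, which pins down the $d_3$'s on classes of the form $h_1^j[c_4^{-k}c_6^l\Delta^{-m}]$, as sketched in the excerpt.

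A subtle feature is that although every differential in the $\TMF$ spectral sequence has degree $n\leq 23$, the map (\ref{eq:diff}) can shift differentials by $\pm 2$: this reflects the fact that $f$ and $4f$ lie in the same bidegree while $g$ and $g'$ differ by $\Delta^2$. Consequently the $\Tmf$ spectral sequence collapses at $E_{26}$. Reading off $E_\infty$ yields the additive groups; grouping positive-degree classes into the $\Delta^8$-periodic summands $R^+\oplus S^+$ and negative-degree classes into the $\Delta^{-8}$-periodic summands $R^-\oplus S^-$ produces the claimed decomposition, with $R^\pm$ carrying the filtration-zero and $h_1$-multiple part and $S^\pm$ collecting the remaining surviving classes in higher filtrations.

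The main obstacle will be determining the ten multiplicative extensions in (1)--(10). My strategy would be to use that the spectral sequence is multiplicative, so each product is determined on $E_\infty$ up to higher-filtration ambiguity; this ambiguity is then fixed by comparing with the connective-to-periodic restriction maps $\pi_*\tmf_{(2)} \to \pi_*\Tmf_{(2)} \to \pi_*\TMF_{(2)}$. Relations among positive-degree classes are recovered from Bauer through the first map, and relations involving the bracketed classes $\langle\cdot\rangle$ and $[\cdot]$ (which arise from the Mayer--Vietoris coboundary and restrict to ordinary products after inverting $\Delta$) are recovered through the second map, using that $\Tmf[\Delta^{-24}] = \TMF$ so the corresponding identities hold in $\pi_*\TMF_{(2)}$. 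Each of the ten listed relations is verified case by case by this device.
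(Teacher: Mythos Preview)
Your treatment of the differentials matches the paper: import from Bauer via the map (\ref{eq:diff}) to $H^*(\mathcal M_{ell})$, handle the remaining $d_3$'s on the $[\,\cdot\,]$-classes by the Leibniz rule with $b$, observe the possible $\pm 2$ shift in the length of a differential coming from the filtration discrepancy between $f,4f$ and between $g,g'$, and conclude collapse at $E_{26}$. That is exactly what the paper does.

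The substantive divergence is in how you propose to establish the ten multiplicative relations. You want to pull them back from $\pi_*\TMF_{(2)}$ along $\pi_*\Tmf_{(2)}\to\pi_*\TMF_{(2)}$. The paper does \emph{not} do this. For (2), (5), (8), (9) it uses Massey product shuffling (in the Bockstein spectral sequence for (2), in the elliptic spectral sequence for the others): for instance (8) is obtained by writing $\langle h_1\Delta\rangle=\langle g,h_2,h_1\rangle$ as a Toda bracket and shuffling, and (9) from $\langle 2h_2\Delta\rangle=\langle g,h_2^2,2\rangle$. Relations (1), (4), (7), (10) are read off from already-established $h_1$- and $h_2$-extensions in the figures, (3) is a formal consequence of (1) and (2), and (6) is obtained from (5) and (2) by dividing by $h_1$.

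Your comparison approach can in principle work for the $\langle\,\cdot\,\rangle$-classes, since those lie in $\ker D$ and hence map nontrivially to $\TMF$; but you would then need these ten hidden extensions to already be on record in $\pi_*\TMF_{(2)}$, and several of them (notably (8) and (9)) are not simply tabulated in Bauer --- they are precisely the kind of filtration-jumping relations one has to argue for. Saying ``verified case by case by this device'' without invoking the Massey product machinery leaves a gap: you have not explained where the input relations in $\TMF$ come from. The paper's route via bracket shuffling is what actually closes that gap.
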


\begin{proof}
\renewcommand{\theenumi}{\Roman{enumi}}
\renewcommand{\theenumii}{\arabic{enumii}}
\begin{enumerate}
\item These multiplications all follow from the map (\ref{eq:diff}). From the $E_5$-sheet the elements of $R^+$ (in filtration 2 or less) cannot support any differentials any more, except for the pure powers of $\Delta$. Because the structure of these terms is quite clear we'll omit them from $E_5^{0,36}$ on. The same is true in the negative degrees for elements of $R^-$.
\item Most of these extensions also follow from the map (\ref{eq:diff}). The few extensions that do not, are derived from an other extension by multiplication by an element of $H^*(\mell)$. These extensions that follow from the multiplicative structure are drawn with a dashed line (see Figure \ref{fig:neg2}).

We have made one amendment to the picture of Tilman Bauer. Define $p$ as the map $p: H^*(\mell)\longrightarrow \pi_*\Tmf$. Then Bauer draws a line between the generators with coordinates $(65,3)$ and $(66,10)$, i.e. $h_2d\Delta^2$ and $h_1^2g^2\Delta$. These elements are representatives of the classes $p(h_2d\Delta^2)$ and $p(h_1^2g^2\Delta)$, so this line would indicate an $\eta$-multiplication between those two classes. There is also an element with coordinates $(65,9)$, namely $h_1g^2\Delta$ of which we already know that $\eta p(h_1g^2\Delta)=p(h_1^2g^2\Delta)$. We also know that $p(h_2d\Delta^2 + h_1g^2\Delta)=p(h_2d\Delta^2)$, so $h_2d\Delta^2$ plus any multiple of $h_1g^2\Delta$ is also a representative of $p(h_2d\Delta^2)$. Hence the drawn multiplication between $(65,3)$ and $(66,10)$ holds no extra information on the level of homotopy groups of $\Tmf$.

Furthermore we fixed one missing multiplication in the positive degrees; from $(122,2)$ to $(125,21)$.
\item 
  \begin{enumerate}
  \item From the $E_5$-sheet of the spectral sequence on, we have that $h_1^3\langle g\Delta^{8i}\rangle = 4h_2\langle g\Delta^{8i}\rangle = h_2^3\langle d\Delta^{8i}\rangle = h_1c\langle d\Delta^{8i}\rangle$. This implies that $h_1^2\langle g\Delta^{8i}\rangle=c\langle d\Delta^{8i}\rangle$ from the $E_5$ sheet on.
  \item In the Bockstein spectral sequence that converges to the $E_2$-sheet, we have the Massey products $\langle \langle d\Delta^{8i}\rangle,2,h_2^2\rangle = h_1\langle g\Delta^{8i}\rangle$ en $\langle 2, h_2^2, c\rangle = h_1d$. The Massey product shuffling lemma gives $$ h_1\langle g\Delta^{8i}\rangle\cdot c=\langle \langle d\Delta^{8i}\rangle,2,h_2^2\rangle c = \langle d\Delta^{8i}\rangle\langle 2,h_2^2,c\rangle = \langle d\Delta^{8i}\rangle dh_1.$$
  \item By the previous two statements, we have $cd^2\langle d\Delta^{8i}\rangle = cd\cdot d\langle d\Delta^{8i}\rangle = h_1^2g\cdot c\langle g\Delta^{8i}\rangle$. 
  \item We recall that, for $i<0$, the element $\langle h_2\Delta^{8i+2}\rangle$ is also denoted by $[\frac14 c_4^{-1}c_6\Delta^{8i+2}]$. Looking at Figures \ref{fig:pos2} and \ref{fig:neg2} we see that $$cg\cdot\langle c\Delta^{8i+1}\rangle = h_1c \langle h_2\Delta^{8i+2}\rangle = h_2^3 \langle h_2\Delta^{8i+2}\rangle = 2g^2\langle g\Delta^{8i}\rangle.$$
  \item Using the Massey product shuffling lemma in the elliptic spectral sequence, we find $$c\cdot\langle 2h_2\Delta^{8i+1}\rangle = \langle \langle g\Delta^{8i}\rangle, h_2,2h_2\rangle c = \langle g\Delta^{8i}\rangle\langle h_2,2h_2,c\rangle = h_1d\langle g\Delta^{8i}\rangle.$$
  \item We derive the statement from $h_1d\cdot \langle 2h_2\Delta^{8i+1}\rangle = cd\langle g\Delta^{8i}\rangle = h_1^2g\langle g\Delta^{8i}\rangle$ by dividing by $h_1$.
  \item This is a direct result from $h_2\cdot\langle h_1c\Delta^{8i+2}\rangle = 2g^2\langle g\Delta^{8i}\rangle = h_1\langle h_1dg\Delta^{8i+1}\rangle$, which is drawn in Figure \ref{fig:pos2}.
  \item From the Massey product shuffling lemma it follows that $$\langle h_1\Delta\rangle^4 = \langle h_1\Delta\rangle^3\cdot\langle h_1\Delta\rangle = \langle h_1\Delta\rangle^3 \langle g,h_2,h_1\rangle = \langle \langle h_1\Delta\rangle^3, g, h_2\rangle h_1 = \langle h_2\Delta^4\rangle h_1.$$ As seen in Figure \ref{fig:pos2}, we know that $\langle h_2\Delta^4\rangle h_1=g^5$.
  \item We compute that $$\langle 2h_2\Delta\rangle^2=\langle 2h_2\Delta\rangle \cdot \langle g, h_2^2, 2\rangle =\langle \langle 2h_2\Delta\rangle, g, h_2^2\rangle 2 = \langle h_2 ^2\Delta^2\rangle 2 = dg^2.$$
  \item This is immediate by the extension $\langle h_1\Delta\rangle\cdot\langle h_2d\Delta^4\rangle = h_2 \langle h_1d\Delta^5\rangle = \langle h_1^2g^2\Delta^4\rangle$, which we find in Figure \ref{fig:pos2}.
  \end{enumerate}
\end{enumerate}
\end{proof}

\clearpage
\begin{figure}
\hspace{-.05\linewidth}
\begin{minipage}{.2\linewidth}
\rotcaption{$\pi_*\Tmf$  localized at 2, the positive degrees.\label{fig:pos2}}
\end{minipage}\hfill
\begin{minipage}{.8\linewidth}
\includepdf[pagecommand={}]{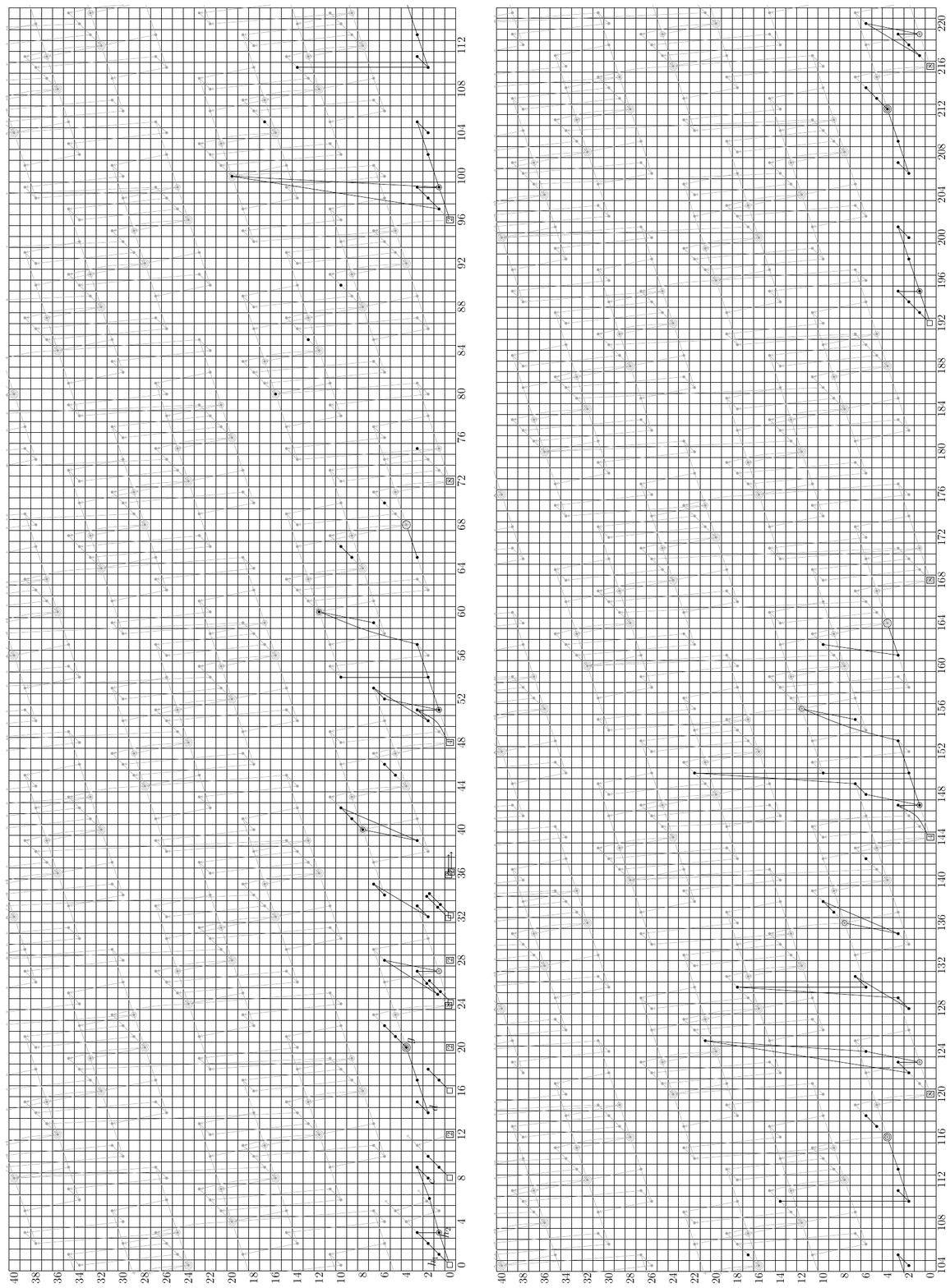}
\end{minipage}
\end{figure}
\clearpage
\begin{figure}
\hspace{-.05\linewidth}
\begin{minipage}{.2\linewidth}
\rotcaption{$\pi_*\Tmf$ localized at 2, the negative degrees.\label{fig:neg2}}
\end{minipage}\hfill
\begin{minipage}{.8\linewidth}
\includepdf[pagecommand={}]{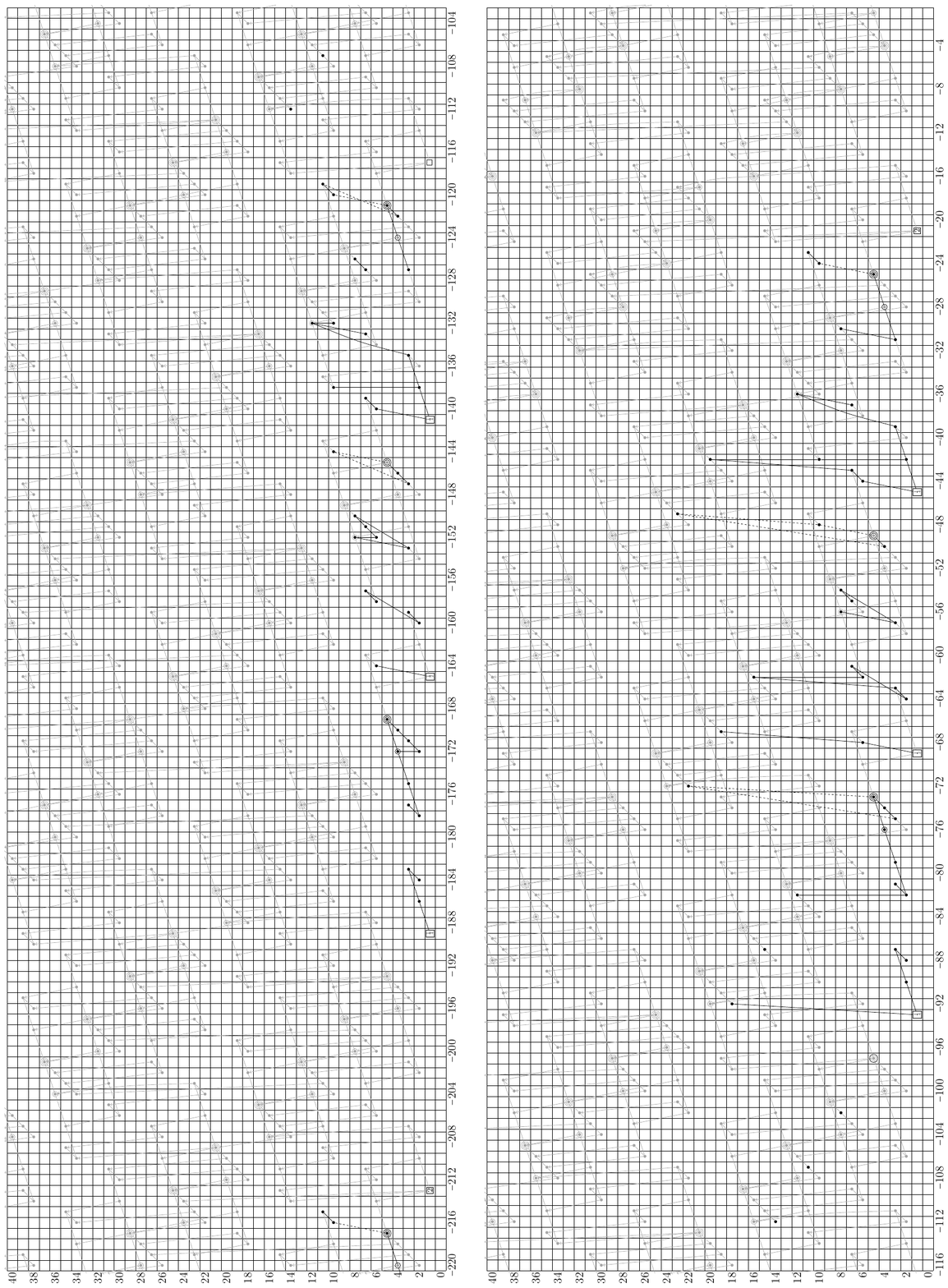}
\end{minipage}
\end{figure}
\clearpage

\bibliographystyle{acm}  
\bibliography{tmf}
\end{document}